\theoremstyle{plain}
\newtheorem{theorem}{Theorem}[section]
\newtheorem{lemma}[theorem]{Lemma}
\newtheorem{corollary}[theorem]{Corollary}
\newtheorem{assumption}[theorem]{Assumption}
\theoremstyle{definition}
\newtheorem{definition}[theorem]{Definition}
\newtheorem{remark}[theorem]{Remark}
\numberwithin{equation}{section}
\newcommand{\rest}{\left.\kern-2\nulldelimiterspace\right|_}
\newcommand{\rank}{\mathop{\rm rank}\nolimits}
\newcommand{\Id}{{\mathbf1}}
\newcommand{\p}{\partial}
\newcommand{\clA}{{\mathcal A}}
\newcommand{\clE}{{\mathcal E}}
\newcommand{\clH}{{\mathcal H}}
\newcommand{\clI}{{\mathcal I}}
\newcommand{\clJ}{{\mathcal J}}
\newcommand{\clU}{{\mathcal U}}
\newcommand{\clV}{{\mathcal V}}
\newcommand{\bbC}{{\mathbb C}}
\newcommand{\bbN}{{\mathbb N}}
\newcommand{\bbR}{{\mathbb R}}
\newcommand{\bbZ}{{\mathbb Z}}
\newcommand{\bfA}{{\mathbf A}}
\newcommand{\bfB}{{\mathbf B}}
\newcommand{\bfD}{{\mathbf D}}
\newcommand{\bfG}{{\mathbf G}}
\newcommand{\bfK}{{\mathbf K}}
\newcommand{\bfZ}{{\mathbf Z}}
\newcommand{\fkC}{{\mathfrak C}}
\newcommand{\bfe}{{\mathbf e}}
\newcommand{\bfg}{{\mathbf g}}
\newcommand{\bfo}{{\mathbf o}}
\newcommand{\bfp}{{\mathbf p}}
\newcommand{\bfx}{{\mathbf x}}
\newcommand{\bfy}{{\mathbf y}}
\newcommand{\rmd}{{\mathrm d}}
\newcommand{\rme}{{\mathrm e}}
\definecolor{DarkBlue}{rgb}{0,0.08,0.45}
\definecolor{DarkRed}{rgb}{.65,0,0}
\definecolor{applegreen}{rgb}{0.55, 0.71, 0.0}
\newcounter{mymac@matlab}
\newcommand{\matlab}{MATLAB%
   \ifnum\value{mymac@matlab}<1%
   \textregistered%
   \setcounter{mymac@matlab}{1}%
   \fi%
  }
\newcommand{\bfclE}{{\bf {\mathcal{E}}}}
\newcommand{\bfPi}{{\mathbf \Pi}}
\begin{document}
\title{Ensemble Feedback Stabilization of Linear Systems}
\author{Philipp A.~Guth$^{\tt1}$, Karl Kunisch$^{\tt1,2}$, and S\'ergio S.~Rodrigues$^{\tt1}$}
\thanks{
\vspace{-1em}\newline\noindent
{\sc MSC2020}: 34H05, 49J15, 49N10, 93B52, 34F05.
\newline\noindent
{\sc Keywords}: Ensemble of parameter-dependent systems, Feedback control, Algebraic Riccati equation, Ensemble stabilization
\newline\noindent
$^{\tt1}$ Johann Radon Institute for Computational and Applied Mathematics,
  \"OAW, 
  Altenbergerstrasse~69, 4040~Linz, Austria.\newline\noindent
$^{\tt2}$   Institute of Mathematics and Scientific Computing, Karl-Franzens University of Graz,	     	
Heinrichstrasse~36, 8010 Graz, Austria.\newline\noindent
{\sc Emails}:
{\small\tt   philipp.guth@ricam.oeaw.ac.at,\quad karl.kunisch@uni-graz.at,\quad\\ \hspace*{3.4em}sergio.rodrigues@ricam.oeaw.ac.at}
 }

\begin{abstract}
Stabilization of linear control systems with parameter-dependent system matrices is investigated. A Riccati based feedback mechanism is proposed and analyzed. It is constructed by means of an ensemble of parameters from a training set. This single  feedback stabilizes all systems of the training set and also systems in its vicinity. Moreover its suboptimality with respect to optimal feedback for each single parameter from the training set can be quantified.
\end{abstract}

\maketitle

\pagestyle{myheadings} \thispagestyle{plain} \markboth{\sc P.A. Guth, K. Kunisch, and S.S. Rodrigues}{\sc Ensemble Feedback Stabilization of Linear Systems}

\section{Introduction}
Stabilization of dynamical systems using feedback control is an important task in science and engineering problems. In real-world applications the involved system equations  frequently dependent on uncertain or even unknown parameters. The stabilization of these systems can be a challenging task since already small changes in the parameters may change the stability properties of the uncontrolled system. In this work we develop a feedback control which stabilizes a class of parameter-dependent linear systems for each realization of the parameter.

This problem falls into the larger class of optimization under uncertainty, see, for example, \cite{azmi2023analysis,guth2022parabolic,kunoth2013analytic,martinez2016robust}. From the control point perspective, these contributions treat open-loop optimal control problems or stationary optimization problems. In our work we focus on optimal control problems in feedback form. They are posed on the infinite time horizon. Thus they represent the optimal control formulation of stabilization problems.

We point out that the parameters in our work enter the model through the system matrix, and thus the problem under investigation differs from treatment of stochastic optimal control problems where the noise enters in an affine manner, as for instance in \cite[Ch.~3.6]{kwakernaak1969linear} or \cite[Ch.~III]{fleming2006controlled}.

It appears to be the case that feedback under uncertainty in the coefficients has received rather little attention in the literature so far, and thus we consider our work as one possible step in this direction. Certainly other approaches are conceivable and their analysis can be of interest in future work.

The manuscript is structured as follows. In Section~\ref{sec:Encontrstab} the notion of ensemble stabilizability is introduced and necessary and sufficient conditions are derived for it to hold. This notion is intimately related to ensemble controllability which is well-known from the literature. A robust linear feedback is proposed and analyzed in Section~\ref{sec:linfb}. In Section~\ref{sec:enstabsys} we verify the applicability of our theoretical results to some models of real-world phenomena. Results of numerical experiments are reported in Section~\ref{sec:numEx}.

\subsection{Related Literature}
Controlled systems with uncertainties entering the system matrix arise, for instance, from control problems involving physical models with uncertain or unknown parameters and hence they are relevant in various fields. Examples include compartmental models with uncertain coefficients, oscillatory systems with uncertain damping coefficients, and spatial discretization of controlled partial differential equations.

For (spatial discretizations of) parabolic equations, in the context of open-loop control, this problem class has been studied recently for both finite time horizon \cite{guth2022parabolic,kunoth2013analytic,martinez2016robust} and infinite time horizon \cite{azmi2023analysis}. Thereby the input randomness is typically expressed in terms of a series expansion (e.g., Karhunen--Lo\`eve expansion) and then approximated by truncating after finitely many terms (see, e.g.,~\cite{nobile2009analysis,zhang2012error}). To account for the stochastic response of the system state, the cost functional of an optimal control problem needs to be composed with a risk measure, such as the expected value. For the numerical approximation of the risk measures, which typically involve high-dimensional (the dimension is given by the order of truncation of the series expansion) integrals of the system output, cubature rules, such as Monte Carlo or quasi-Monte Carlo methods are used.

Research towards feedback controls for parameterized systems include the following. In \cite{yedavalli2014robust} robustness criteria for linear systems are investigated. For instance, error bounds are obtained for the perturbed system and control matrices under which a Riccati based nominal feedback law remains stable. In \cite{kramer2017feedback} the authors propose an online-offline strategy to stabilize a parameter dependent controlled dynamical system. In the offline phase a field of stabilizing feedbacks is precomputed for sampled parameter values. These are used in the online phase during which a classification is carried out to determine the current (time-dependent) parameter value.

The concept of ensemble controllability, that is, the controllability of ensembles of systems, which is also referred to as simultaneous controllability, is investigated in \cite{LAZAR2022265}, \cite[Ch.~5]{lions1988controlabilite}, \cite[Ch.~11.3]{tucsnak2009observation}. Related concepts are the notions of uniform ensemble controllability and $L^p$-ensemble controllability \cite{danhane2022conditions,helmke2014uniform}. In \cite{zuazua2014averaged} the notion of avaraged controllabity, is discussed and a Kalman-type rank condition is derived. Results on averaged controllability for time-dependent PDEs with uncertain coefficients can be found in \cite{coulson2019average} and the references therein.

Our theoretical analysis is based on the notion of ensemble stabilizability. Among works concerned with stabilization of ensembles of systems using feedback controls, we find~\cite{chittaro2018asymptotic}, where stabilizability is investigated for an ensemble of Bloch equations, and~\cite{ryan2014simultaneous}, where a globally asymptotic bilinear stabilizing feedback for an ensemble of oscillators with pairwise distinct free-dynamics frequencies is developed.


\section{Ensemble Controllability and Ensemble Stabilizability}\label{sec:Encontrstab}
We address the design of a feedback control which can be used to effectively steer each member of an ensemble of linear systems. Such an ensemble arises, for example, from the parameterization of uncertain coefficients in the system. This feedback will be introduced in Section~\ref{sec:linfb}. In this section we introduce and discuss the notions of ensemble controllability and ensemble stabilizability. We commence from a given finite ensemble (sequence) of parameters $\varSigma\coloneqq (\sigma_i)_{i=1}^N$, where~$N\ge2$ is an integer, and consider the ensemble of dynamical linear control systems, for time~$t>0$, as follows
\begin{align}\label{eq:sys}
 \dot{x}_{\sigma_i} &= \clA_{\sigma_i} x_{\sigma_i} + Bu_{\sigma_i},\qquad  x_{\sigma_i}(0) = x_{\circ}, &&1\le i\le N, 
\end{align}
where  $\dot{x}_{\sigma_i}\coloneqq\frac{\rmd}{\rmd t}{x}_{\sigma_i}$, $\clA_{\sigma_i} \in \bbR^{n \times n}$, $x_{\sigma_i}(t) \in \bbR^{n}$ and~$u_{\sigma_i}(t) \in \bbR^{m}$ for all $i=1,\ldots,N$, $B\in \bbR^{n \times m}$,  and $x_{\circ} \in \bbR^{n}$ is a given initial condition. The subscript $\sigma_i$ denotes the dependence of~$\clA$, $x$, and~$u$ on the $i$--th parameter.

We shall derive a feedback control for~\eqref{eq:sys}, where the extended control system
 \begin{align}\label{eq:extsys}
  \dot{\bfx} &= \bfA_{\varSigma} \bfx + \bfB u,\qquad  \bfx(0) = \bfx_{\circ},
 \end{align}
is used as an auxiliary system, with initial condition $\bfx_{\circ} \in \bbR^{nN}$ and block matrices
 \begin{align}\label{eq:extsys-op}
 \bfA_{\varSigma}&\coloneqq\begin{bmatrix}\clA_{\sigma_1}& \\
 & \clA_{\sigma_2}\\
 & &\ddots\\
 & & & \clA_{\sigma_N}\end{bmatrix} \in \bbR^{nN\times nN},\quad \mbox{and} \quad
 \bfB\coloneqq\begin{bmatrix}B \\B\\ \vdots\\B\end{bmatrix} \in \bbR^{nN\times m}.
 \end{align}
\begin{remark}
To make the connection with~\eqref{eq:sys}, later on, we shall be particularly interested in initial conditions as~$\bfx_\circ=\begin{bmatrix} x_{\circ}^\top& x_{\circ}^\top& \hdots& x_{\circ}^\top \end{bmatrix}^\top \in \bbR^{nN\times 1}$  with~$x_\circ\in\bbR^{n\times 1}$.
\end{remark}

We shall look for input controls (as~$u_{\sigma_i}$ and~$u$ above) which are in~$L^2((0,+\infty);\bbR^m)$.
\begin{definition}
Given a finite ensemble of parameters $\varSigma=  (\sigma_i)_{i=1}^N$, we say that~$(\bfA_{\varSigma},\bfB)$ (or system~\eqref{eq:extsys}) is controllable if for each pair of states~$\bfx_{\circ} \in \bbR^{nN}$ and~$\bfx_1 \in \bbR^{nN}$, there exists~$T>0$ and a control $u \in L^2((0,T);\bbR^m)$ so that the solution $\bfx$ of~\eqref{eq:extsys} satisfies~$\bfx(T)=\bfx_1$.
\end{definition}

\begin{definition}
 For a given finite ensemble of parameters $\varSigma=  (\sigma_i)_{i=1}^N$, we say that the ensemble of systems~$(\clA_{\sigma_i},B)_{i=1}^N$ is \emph{ensemble controllable} if ~$(\bfA_{\varSigma},\bfB)$ is controllable.
\end{definition}

By introducing the matrix
\begin{equation}\notag
\begin{bmatrix}\bfA_{\varSigma}\,\colon\bfB\end{bmatrix}\coloneqq\begin{bmatrix}\bfB&\bfA_{\varSigma}\bfB&\ldots&\bfA_{\varSigma}^{nN-1}\bfB\end{bmatrix}\in\bbR^{nN\times mnN},
\end{equation}
the ensemble controllability of $(\clA_{\sigma_i},B)_{i=1}^N$ can be verified using the Kalman rank condition~\cite[Cor.~1.4.10]{tucsnak2009observation}: $(\bfA_{\varSigma},\bfB)$ is controllable iff (if and only if)
\begin{align}\label{eq:Kalman}
 \rank \begin{bmatrix}\bfA_{\varSigma}\,\colon\bfB\end{bmatrix} = nN.
\end{align}
Alternatively, denoting the set of the eigenvalues of~$A\in\bbR^{n\times n}$ by~${\rm Eig}(A)\subset\bbC$, we can use the Hautus test \cite[Thms.~1 and~1$^\prime$]{HautusContr}, \cite[Thm.~1]{HautusStab}:  $(\bfA_{\varSigma},\bfB)$ is controllable iff
\begin{align}\label{eq:HautusContr}
 \rank\begin{bmatrix}\bfA_{\varSigma}-\lambda\Id_{nN}&\bfB\end{bmatrix} = nN\quad\mbox{for all}\quad\lambda \in {\rm Eig}(\bfA_{\varSigma}).
\end{align}

\begin{definition}
Given a finite ensemble of parameters $\varSigma=  (\sigma_i)_{i=1}^N$, we say that~$(\bfA_{\varSigma},\bfB)$ (i.e., that system~\eqref{eq:extsys}) is stabilizable if there exists a matrix $\bfK_{\varSigma} \in \bbR^{m \times nN}$ such that $\bfA_{\varSigma} + \bfB \bfK_{\varSigma}$ is stable.
\end{definition}

\begin{definition}
 For a given finite ensemble of parameters $\varSigma=  (\sigma_i)_{i=1}^N$, we say that the ensemble $(\clA_{\sigma_i},B)_{i=1}^N$ is \emph{ensemble stabilizable} if $(\bfA_{\varSigma},\bfB)$ is stabilizable.
\end{definition}

Ensemble stabilizability of $(\clA_{\sigma_i},B)_{i=1}^N$ can be verified by the Hautus test (for stabilizability) \cite[Thm.~4]{HautusStab}: $(\bfA_{\varSigma},\bfB)$ is stabilizable iff
\begin{align}\label{eq:HautusStab}
 \rank\begin{bmatrix}\bfA_{\varSigma}-\lambda\Id_{nN}&\bfB\end{bmatrix} = nN\quad\mbox{for all}\quad\lambda \in {\rm Eig}_{\ge0}(\bfA_{\varSigma}),
\end{align}
where~${\rm Eig}_{\ge 0}(\bfA_{\varSigma}) \coloneqq \{\lambda \in {\rm Eig}(\bfA_{\varSigma})\mid \mathfrak{Re}(\lambda) \geq 0\}$ and~$\mathfrak{Re}(\lambda)$ is the real part of~$\lambda\in\bbC$.

By \eqref{eq:HautusContr} and \eqref{eq:HautusStab}, ensemble controllability implies ensemble stabilizability.

Now, we introduce the following notation:
by $\langle x, y \rangle$, with~$x,y \in \bbR^M$, we denote the Euclidean scalar product on $\bbR^M$ and its associated norm by $\|x\| \coloneqq \sqrt{\langle x, x \rangle}$. For a matrix $A\in \bbR^{n\times m}$ we denote its linear operator norm by $\|A\| \coloneqq \sup\limits_{y \in \bbR^m \setminus \{0\}} \frac{\|Ay\|}{\|y\|}$. Finally, by $\Id_n$ we denote the identity matrix in $\bbR^{n \times n}$.

Next, note that given~$\alpha > 0$, we have that $(\bfA_{\varSigma},\bfB)$ is stabilizable iff $(\bfA_{\varSigma},\frac{1}{\sqrt\alpha}\bfB)$ is stabilizable. Thus, it is well known that for every $\alpha > 0$ there exists a unique solution~$\bfPi_{\varSigma}\succ0$ (i.e., positive definite) of the algebraic Riccati equation
\begin{align}\label{eq:bigRiccati}
 \bfA_{\varSigma}^\top{\bfPi_{\varSigma} + \bfPi_{\varSigma} \bfA_{\varSigma} }- \frac{1}{\alpha}{\bfPi_{\varSigma} \bfB \bfB^\top \bfPi_{\varSigma}} + \frac{1}{ N}\Id_{nN} = 0,
\end{align}
and that the matrix ${\bfA_{\varSigma}} - \frac{1}{\alpha} \bfB \bfB^\top \bfPi_{\varSigma}$ is stable (see, e.g.,~\cite[Thm.~9.5]{zabczyk2020controltheory}).
Moreover, the matrix $\bfPi_{\varSigma}$ provides the optimal control $u_{\varSigma}$ in feedback form
\begin{align}\label{eq:extcontr}
u_{\varSigma}(t) = -\frac{1}{\alpha}  \bfB^{\top} \bfPi_{\varSigma} \bfx(t),\qquad t\geq 0,
\end{align}
which minimizes the functional
\begin{align}\label{eq:extobj}
\clJ(\bfx,u) = \frac12 \int_0^{+\infty} \Big( \frac{1}{ N} \|\bfx(s)\|^2 + \alpha \|u(s)\|^2 \Big)\rm ds
\end{align}
subject to~\eqref{eq:extsys}. The minimal value of $\clJ$ equals $\frac12 \langle\bfx_{\circ}, \bfPi_{\varSigma} \bfx_{\circ}\rangle$ (see, e.g.,~\cite[Thm.~9.4]{zabczyk2020controltheory}).

We note that the closed-loop control $u_{\varSigma}$ given in \eqref{eq:extcontr} coincides with the open-loop optimal control obtained from the first order optimality relations associated with the same optimization problem, namely,
\begin{subequations}\label{eq:optcondext}
 \begin{align}
\dot{\bfx} &= \bfA_{\varSigma} \bfx+ \bfB u,\qquad\bfx(0) = \bfx_{\circ},\label{eq:optcondextA}\\
\dot{\bfp}&= -\bfA_{\varSigma}^\top \bfp - \frac{1}{ N}\bfx,\label{eq:optcondextC}\\
u &= -\frac{1}{\alpha}\bfB^\top \bfp.\label{eq:optcondextE}
 \end{align}
 \end{subequations}
Further, we recall that~\eqref{eq:extcontr} follows by the dynamic programming principle, from which we obtain $\bfp(t) = \bfPi_{\varSigma}\bfx(t)$ for $t\ge 0$. Finally, we see that
 \begin{align}\label{eq:optcondextxpinW}
\bfx\in W((0,+\infty);\bbR^{nN})\quad\mbox{and}\quad\bfp\in W((0,+\infty);\bbR^{nN}),
 \end{align}
where we denote $W(I;\bbR^{M})\coloneqq\{z\in L^2(I;\bbR^{M})\mid \dot z\in L^2(I;\bbR^{M})\}$ defined in a time interval~$I\subseteq(0,+\infty)$, for a positive integer~$M$.

 In particular, the stability of $\bfA_{\varSigma}+\bfB\bfK_\varSigma$, with~$\bfK_\varSigma=-\frac{1}{\alpha}\bfB^\top\bfPi_{\varSigma}$, gives us that $\lim_{t \to \infty} \bfx(t) =0$ and consequently~$\lim_{t \to \infty} \bfp(t) =0$.

\subsection{Relation between Stabilizability and Ensemble Stabilizability}\label{sec:Rel}

We start with a result providing necessary and sufficient conditions for ensemble stabilizability. Results analogous to the following lemma are known for controllability and can be found in \cite[Prop. 4.1]{danhane2022conditions} and \cite[Lem. 1]{helmke2014uniform}.

\begin{lemma}\label{lem:stab-enstab}
Let the ensemble of systems $(\clA_{\sigma_i},B)_{i=1}^N$ be ensemble stabilizable. Then, for every $1\le i\le N$, the system $(\clA_{\sigma_i},B)$ is stabilizable, and there holds
\begin{align}
\bigcup_{\substack{\clI \subseteq\{1,\ldots,N\}\\ \#\clI =m+1}} \bigcap_{j\in\clI} {\rm Eig}_{\ge 0}(\clA_{\sigma_j}) = \emptyset,\label{CupCapEigs}
\end{align}
where $m$ denotes the dimension of the control space and $\#\clI$ the cardinality of $\clI$.
Reciprocally, let the system $(\clA_{\sigma_i},B)$ be stabilizable for each $1\le i\le N$, and let
\begin{align}
 {\rm Eig}_{\ge 0}(\clA_{\sigma_i}) \cap {\rm Eig}_{\ge 0}(\clA_{\sigma_j}) = \emptyset\quad \mbox{for all } i \ne j, \quad1\le i,j\le N.\notag
\end{align}
Then, the ensemble of systems $(\clA_{\sigma_i},B)_{i=1}^N$ is ensemble stabilizable.
\end{lemma}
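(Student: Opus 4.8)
The plan is to phrase everything through the Hautus stabilizability test~\eqref{eq:HautusStab}, applied both to the extended pair $(\bfA_\varSigma,\bfB)$ and, in the case $N=1$, to each individual pair $(\clA_{\sigma_i},B)$. Since $\bfA_\varSigma$ is block-diagonal with blocks $\clA_{\sigma_1},\dots,\clA_{\sigma_N}$, its characteristic polynomial factors, so ${\rm Eig}(\bfA_\varSigma)=\bigcup_{i=1}^N{\rm Eig}(\clA_{\sigma_i})$ and hence ${\rm Eig}_{\ge0}(\bfA_\varSigma)=\bigcup_{i=1}^N{\rm Eig}_{\ge0}(\clA_{\sigma_i})$. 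The step I would set up first is a block description of the left kernel of the Hautus matrix: the $nN\times(nN+m)$ matrix $\begin{bmatrix}\bfA_\varSigma-\lambda\Id_{nN}&\bfB\end{bmatrix}$ has rank $nN$ iff its left kernel is trivial, and, writing $w\in\bbC^{nN}$ in blocks $w=(w_1,\dots,w_N)$ with $w_i\in\bbC^n$, one checks directly from~\eqref{eq:extsys-op} that
\begin{align*}
w^\top\begin{bmatrix}\bfA_\varSigma-\lambda\Id_{nN}&\bfB\end{bmatrix}=0
\quad\Longleftrightarrow\quad
(\clA_{\sigma_i}^\top-\lambda\Id_n)w_i=0\ \ (1\le i\le N)\ \ \text{and}\ \ \textstyle\sum_{i=1}^N B^\top w_i=0.
\end{align*}
Consequently~\eqref{eq:HautusStab} fails for $(\bfA_\varSigma,\bfB)$ at $\lambda$ iff there exist $w_i\in\Ker(\clA_{\sigma_i}^\top-\lambda\Id_n)$, not all zero, with $\sum_i B^\top w_i=0$; and, taking $N=1$, the pair $(\clA_{\sigma_i},B)$ fails to be stabilizable iff there exist $\lambda\in{\rm Eig}_{\ge0}(\clA_{\sigma_i})$ and $0\ne w_i\in\Ker(\clA_{\sigma_i}^\top-\lambda\Id_n)$ with $B^\top w_i=0$. (Here I use ${\rm Eig}(\clA_{\sigma_i}^\top)={\rm Eig}(\clA_{\sigma_i})$, so left eigenvectors exist exactly at the eigenvalues.) Both implications of the lemma are then assembled from these two characterizations.

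For the forward implication I would assume $(\bfA_\varSigma,\bfB)$ is stabilizable and argue by contradiction, separately for the two conclusions. If some $(\clA_{\sigma_{i_0}},B)$ were not stabilizable, I take a witness $(\lambda,w_{i_0})$ as above, pad it with zero blocks $w_j=0$ for $j\ne i_0$, and obtain a nonzero element of the left kernel of $\begin{bmatrix}\bfA_\varSigma-\lambda\Id_{nN}&\bfB\end{bmatrix}$ at $\lambda\in{\rm Eig}_{\ge0}(\clA_{\sigma_{i_0}})\subseteq{\rm Eig}_{\ge0}(\bfA_\varSigma)$, contradicting~\eqref{eq:HautusStab}. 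For~\eqref{CupCapEigs}, suppose some $\lambda$ lies in ${\rm Eig}_{\ge0}(\clA_{\sigma_j})$ for every $j$ in a set $\clI$ with $\#\clI=m+1$; choose nonzero $w_j\in\Ker(\clA_{\sigma_j}^\top-\lambda\Id_n)$ for each $j\in\clI$. The $m+1$ vectors $\{B^\top w_j\}_{j\in\clI}$ lie in the $m$-dimensional space $\bbC^m$, hence are linearly dependent, giving scalars $c_j$ (not all zero) with $\sum_{j\in\clI}c_j B^\top w_j=0$. Then the stacked vector $\tilde w$ with $\tilde w_j:=c_jw_j$ for $j\in\clI$ and $\tilde w_j:=0$ otherwise lies in the left kernel of $\begin{bmatrix}\bfA_\varSigma-\lambda\Id_{nN}&\bfB\end{bmatrix}$; it is nonzero because, picking $j_*\in\clI$ with $c_{j_*}\ne0$, its $j_*$-th block equals $c_{j_*}w_{j_*}\ne0$; and it sits at $\lambda\in{\rm Eig}_{\ge0}(\bfA_\varSigma)$, again contradicting~\eqref{eq:HautusStab}.

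For the converse I would assume each $(\clA_{\sigma_i},B)$ is stabilizable and that the sets ${\rm Eig}_{\ge0}(\clA_{\sigma_i})$ are pairwise disjoint, and verify~\eqref{eq:HautusStab} for $(\bfA_\varSigma,\bfB)$ directly. Fix $\lambda\in{\rm Eig}_{\ge0}(\bfA_\varSigma)$ and let $w=(w_1,\dots,w_N)$ lie in the left kernel of $\begin{bmatrix}\bfA_\varSigma-\lambda\Id_{nN}&\bfB\end{bmatrix}$; the goal is $w=0$. The block characterization gives $w_i\in\Ker(\clA_{\sigma_i}^\top-\lambda\Id_n)$ for all $i$. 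Since $\lambda\in{\rm Eig}_{\ge0}(\bfA_\varSigma)$ we have $\mathfrak{Re}(\lambda)\ge0$ and $\lambda\in{\rm Eig}_{\ge0}(\clA_{\sigma_{i_0}})$ for some $i_0$; pairwise disjointness together with $\mathfrak{Re}(\lambda)\ge0$ then forces $\lambda\notin{\rm Eig}(\clA_{\sigma_j})$ for $j\ne i_0$, so $w_j=0$ for all $j\ne i_0$, and the remaining condition $\sum_iB^\top w_i=0$ collapses to $B^\top w_{i_0}=0$. Thus $w_{i_0}$ lies in the left kernel of $\begin{bmatrix}\clA_{\sigma_{i_0}}-\lambda\Id_n&B\end{bmatrix}$ with $\lambda\in{\rm Eig}_{\ge0}(\clA_{\sigma_{i_0}})$, and stabilizability of $(\clA_{\sigma_{i_0}},B)$ forces $w_{i_0}=0$. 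Hence $w=0$, the matrix $\begin{bmatrix}\bfA_\varSigma-\lambda\Id_{nN}&\bfB\end{bmatrix}$ has rank $nN$, and $(\bfA_\varSigma,\bfB)$ is stabilizable.

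The only part that takes a genuine idea rather than bookkeeping with the block decomposition is the dimension count behind~\eqref{CupCapEigs}: recognizing that the $m+1$ left-eigenvector images must be dependent in the $m$-dimensional space $\bbC^m$, and that rescaling the eigenvectors by the dependence coefficients---some of which may vanish---still yields a nonzero stacked vector, since at least one coefficient is nonzero and its eigenvector was chosen nonzero. The rest is a matter of keeping the real-versus-complex bookkeeping straight (the Hautus matrix is generally complex, but~\eqref{eq:HautusStab} is a statement about its rank over $\bbC$) and recalling ${\rm Eig}(\clA_{\sigma_i}^\top)={\rm Eig}(\clA_{\sigma_i})$.
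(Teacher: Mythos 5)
Your proposal is correct and follows essentially the same route as the paper: both directions rest on the block decomposition of the left kernel of the Hautus matrix for $(\bfA_\varSigma,\bfB)$, with the padding-by-zero-blocks argument for individual stabilizability, the linear-dependence of $m+1$ vectors $B^\top w_j$ in $\bbC^m$ for \eqref{CupCapEigs}, and the disjointness of the sets ${\rm Eig}_{\ge0}(\clA_{\sigma_i})$ forcing all but one block to vanish in the converse. The only cosmetic difference is that you verify the Hautus condition for the converse directly while the paper argues by contraposition.
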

\begin{proof}
 To prove the first statement, suppose that the $i$-th system $(\clA_{\sigma_i},B)$, $1\le i \le N$ is not stabilizable. Then, the Hautus test implies that there exists a vector $v_i\in\bbC^{n\times1}\setminus\{0\}$ such that $v_i^\top B = 0\in\bbC^{1\times m}$ and $v_i^\top(\clA_{\sigma_i} - \lambda \Id_n) = 0$ for an eigenvalue $\lambda\in{\rm Eig}_{\ge0}(\clA_{\sigma_i})$. For $v^\top \coloneqq \begin{bmatrix} 0&\ldots&0&v_i^\top&0&\ldots &0 \end{bmatrix}\in\bbC^{1\times nN}$ there holds $0 = v_i^\top (\clA_{\sigma_i}-\lambda \Id_n) = v^\top (\bfA_{\varSigma} - \lambda \Id_{nN})$ and $0=v_i^\top B=v^\top \bfB$, thus $(\bfA_{\varSigma},\bfB)$ is not stabilizable, by the Hautus test \eqref{eq:HautusStab}. 
 We conclude that the ensemble stabilizability of $(\clA_{\sigma_i},B)_{i=1}^N$ holds only if we have the stabilizability of every system $(\clA_{\sigma_i},B)$, $1\le i \le N$.

Suppose now that there exists a subset~$\clI \subseteq\{1,\ldots,N\}$ satisfying~$\#\clI = m+1$ and~$\Lambda\coloneqq\bigcap_{j\in\clI} {\rm Eig_{\ge 0}}(\clA_{\sigma_j}) \neq \emptyset$. Without loss of generality (up to a reordering of~$\varSigma$) we can assume that~$\clI=\{1,2,\dots,m+1\}$. Since ${\rm Eig}(\clA_{\sigma_i})={\rm Eig}(\clA_{\sigma_i}^\top)$, by taking~$\lambda\in\Lambda$ we know that there are~$v_i\in\bbC^{n\times1}\setminus\{0\}$ (eigenvector of~$\clA_{\sigma_i}^\top$) satisfying $v_i^\top \clA_{\sigma_i} = \lambda v_i^\top$ for each $1\le i\le m+1$. Since~$m+1 >m$ we can find a vector $\alpha \in \bbC^{m+1}\setminus\{0\}$ such that $\sum_{i=1}^{m+1} \alpha_i v_i^\top B = 0$. Setting $\tilde{v}^\top \coloneqq \begin{bmatrix} \alpha_1 v_1^\top & \alpha_2 v_2^\top &\ldots & \alpha_{m+1} v_{m+1}^\top & 0 &\ldots & 0\end{bmatrix}\ne0$ gives $\tilde{v}^\top \begin{bmatrix} \bfA_{\varSigma} - \lambda \Id_{nN} & \bfB \end{bmatrix} = 0$. Since~$\lambda\in{\rm Eig}_{\ge0}(\bfA_{\varSigma})$ we can conclude, by the Hautus test, that~$(\bfA_{\varSigma},\bfB)$ is not stabilizable. This ends the proof of the first statement.

To prove the second statement, we suppose that $(\bfA_{\varSigma},\bfB)$ is not stabilizable. Then, by the Hautus test, there are $\lambda\in{\rm Eig}_{\ge0}(\bfA_{\varSigma})$ and~$v\in\bbC^{nN\times1}\setminus\{0\}$ such that $v^\top\bfB = 0$ and $v^\top(\bfA_{\varSigma}-\lambda\Id_{nN}) = 0$. In particular, there holds $v_i \ne 0$ for at least one $i\in\{1,2,\dots, N\}$, where $v^\top = \begin{bmatrix} v_1^\top&\ldots&v_i^\top&\ldots& v_N^\top \end{bmatrix}$, with~$ v_j\in\bbC^{n\times1}$, $1\le j\le N$. Next, since $v^\top \bfA_{\varSigma} = \lambda v^\top$, we have that if $\mbox{\rm Eig}_{\ge 0}(\clA_{\sigma_j})\, \cap\, \mbox{\rm Eig}_{\ge 0}(\clA_{\sigma_i}) = \emptyset$ for all $i \ne j$, then there can be at most one $v_i \ne 0$. In this case, $v^\top = \begin{bmatrix} 0&\ldots&0&v_i^\top&0&\ldots &0 \end{bmatrix}$ and thus $0 = v^\top\bfB = v_i^\top B$ and $0 = v^\top(\bfA_{\varSigma} - \lambda \Id_{nN}) = v_i^\top(\bfA_{\sigma_i} - \lambda \Id_n)$, which implies that the~$i$-th system~$(\clA_{\sigma_i},B)$ not stabilizable, by the Hautus test. This ends the proof of the second statement. \end{proof}


\section{A Linear Feedback}\label{sec:linfb}
In this section we return to the construction of  a feedback law for an ensemble of systems. For a given control operator $B\in \bbR^{n \times m}$ and a given initial condition $x_{\circ} \in \bbR^{n}$, we want to find a control $u_{\sigma} \in \bbR^{m}$ in feedback form, i.e., $u_{\sigma} = Kx_{\sigma}$ for some $K\in \bbR^{m\times n}$ independent of $\sigma$, which stabilizes the system
\begin{align}\notag
 \dot{x}_{\sigma} &= \clA_{\sigma} x_{\sigma}+ Bu_{\sigma},\qquad x_{\sigma}(0) = x_{\circ},
\end{align}
with a matrix~$\clA_{\sigma}\in \bbR^{n \times n}$ depending on an unknown parameter $\sigma\in\bbR$. This can be a challenging task since already small changes in~$\sigma$ may change the stability properties of the uncontrolled system. It is therefore important to take into account the uncertainty when constructing a feedback for the system, especially in situations in which it is prohibitively costly or impossible to obtain an a-priori estimate for~$\sigma$.

We construct a feedback based on a finite ensemble $\varSigma = (\sigma_i)_{i=1}^N$, $N\ge2$, of possible values for~$\sigma$ and derive conditions which ensure that this feedback stabilizes the system for the ``true''~$\sigma$.
Identifying~$\bbR^M$ with~$\bbR^{M\times1}$ we define the extension operator
\begin{align}\label{eq:extOp}
\clE: \bbR^n \to \bbR^{nN},\qquad
x \mapsto\begin{bmatrix} x^\top & x^\top &\ldots & x^\top \end{bmatrix}^\top,
\end{align}
and introduce the linear feedback control operator
\begin{align}\label{eq:linFB}
-\frac{1}{\alpha}\bfB^\top \bfPi_{\varSigma} \clE \in \bbR^{m\times n},
\end{align}
where~$\bfB$ is as in~\eqref{eq:extsys-op} and~$\bfPi_{\varSigma}\succ0$ solves~\eqref{eq:bigRiccati}. It leads to the feedback control input
\begin{align}\label{eq:fbcontrol}
u_{\varSigma,\sigma} \coloneqq -  \frac{1}{\alpha} \bfB^\top \bfPi_{\varSigma} \clE x_{\varSigma,\sigma},
\end{align}
and the associated closed-loop system
\begin{align}
\dot{x}_{\varSigma,\sigma} &= \clA_{\sigma} x_{\varSigma,\sigma} - \frac{1}{\alpha}B \bfB^\top \bfPi_{\varSigma} \clE x_{\varSigma,\sigma},\qquad
x_{\varSigma,\sigma}(0) = x_{\circ}.\label{eq:avclsys}
\end{align}
Here $\sigma$ is not necessarily an element of $\varSigma$.
In this framework we can consider the parameters $\sigma_i$ as training parameters which serve for the construction of $\bfPi_\Sigma$. We have tested the proposed feedback law \eqref{eq:fbcontrol} for different application-motivated situations and shall  report on numerical results in Section~\ref{sec:numEx}. The results are very promising both in situations where the parameters $\sigma_i$ correspond to stable and unstable matrices~$\clA_{\sigma_i}$. The feedback law also provides good results in some cases when applied to systems~$(\clA_{\sigma},B)$ with $\sigma$ not in the convex hull of the training parameters $\sigma_i\in\varSigma$. These results motivate the analysis of some of its structural properties.
For this purpose, we make the following assumption throughout the rest of the paper.
\begin{assumption}\label{assump1}
 For the finite ensemble of parameters $\varSigma = (\sigma_i)_{i=1}^N$, the ensemble of systems~$(\clA_{\sigma_1},B)_{i=1}^N$ as in~\eqref{eq:sys} is ensemble stabilizable.
\end{assumption}
In Lemma~\ref{lem:stab-enstab} we provided necessary and sufficient conditions for this assumption to hold.
In particular, we have that $\bigcup_{\substack{\clI \subseteq\{1,\ldots,N\}\\ \#\clI = m+1}}\bigcap_{j\in\clI} {\rm Eig}_{\ge 0}(\clA_{\sigma_j}) = \emptyset$ if the Assumption~\ref{assump1} holds. Conversely, note that since $N>1$, if $\sigma_i=\sigma_1$ for all~$1\le i\le N$ and if~$\lambda\in{\rm Eig}_{\ge 0}(\clA_{\sigma_1})$, then the Assumption~\ref{assump1} cannot hold if~$m+1 \le N$.

Before we present the results for the feedback \eqref{eq:linFB}, in the following subsection we first compare the minimizer $(\bfx,u_{\varSigma})$ of $\clJ$ as defined in \eqref{eq:extobj} subject to the extended system \eqref{eq:extsys} to its counterpart for a single, fixed parameter $\sigma$: find a minimizing pair $(x_\sigma,u_{\sigma})$ of
\begin{align}\label{eq:singlecost}
 \clJ(\clE x_{\sigma},u) = \frac12 \int_0^{+\infty} \Big(\|x_{\sigma}(t)\|^2+ \alpha \|u(t)\|^2\Big){\rm dt}
\end{align}
subject to
\begin{align}
 \dot{x}_{\sigma} &= \clA_{\sigma} x_{\sigma}+ Bu,\qquad x_{\sigma}(0) = x_{\circ}, \label{eq:syssingle} 
\end{align}
Note that $\frac1 N \|\clE y\|^2 = \|y\|^2$ for $y \in \bbR^{n}$.

Next, we define $\|\bfy\|_{\bfPi_{\varSigma}} \coloneqq \sqrt{\langle \bfy,\bfPi_{\varSigma} \bfy \rangle}$, for vectors $\bfy \in \bbR^{nN}$. Since~$\bfPi_{\varSigma}\succ0$ is symmetric, we can choose positive constants $\beta_1,\beta_2$ such that
\begin{align}\label{eq:Pinorm}
 \beta_1 \|\bfy\| \le \|\bfy\|_{\bfPi_{\varSigma}} \le \beta_2 \|\bfy\|, \qquad \mbox{for all}~\bfy \in \bbR^{nN}.
\end{align}
Note that~\eqref{eq:Pinorm} holds with $\beta_1 \coloneqq \sqrt{(\min({\rm Eig}(\bfPi_{\varSigma})))}$ and $\beta_2 \coloneqq \sqrt{(\max({\rm Eig}(\bfPi_{\varSigma})))}$. Recalling~\eqref{eq:bigRiccati}, $\bfPi_{\varSigma}$ depends on the weight~$\alpha$ of the control cost and the number~$N=\#\varSigma$ of parameters. Hence, also $\beta_1$ and~$\beta_2$ depend on~$(\alpha,N)$.

In the following we will frequently use the notation $\bfA_{\sigma}$ to denote a block-diagonal matrix containing $N$ identical blocks with the same parameter $\sigma$, that is,
\begin{equation}\notag
\bfA_{\sigma} \coloneqq\begin{bmatrix}\clA_{\sigma}& \\ & \ddots\\ & & \clA_{\sigma}\end{bmatrix}.
\end{equation}
We observe that the stabilizability of $(\bfA_{\varSigma},\bfB)$ implies that there exists $M_s>0$ such that, for all $\bfg \in L^2((0,+\infty);\bbR^{nN}),$ the problem
\begin{align}
\dot{\bfy} = \bfA_{\varSigma} \bfy - \bfB \bfB^\top \bfPi_{\varSigma} \bfy + \bfg,\qquad 
\bfy(0) = 0,\label{eq:g}
\end{align}
has a unique solution $\bfy \in W( (0,+\infty);\bbR^{nN})$ satisfying
\begin{align}\label{eq:w-est}
\|\bfy\|_{L^2( (0,+\infty);\bbR^{nN})} \leq M_s\|\bfg\|_{L^2( (0,+\infty);\bbR^{nN})}.
\end{align}
This holds true since $\mathbf{G} \coloneqq \bfA_{\varSigma} - \bfB \bfB^\top \bfPi_{\varSigma}$ is stable, that is, ${\rm Eig_{\ge0}(\bfG)}=\emptyset$. Hence, $\rme^{\bfG t}$ is exponentially stable, and the claim  follows (e.g., from~\cite[Prop.~3.7, Part~II, Ch.~ 1]{bensoussan2007representation}).

\subsection{On Optimal Controls and Costs} We compare the minimizers of~\eqref{eq:singlecost} and of~\eqref{eq:extobj} subject to the dynamics of the single parameter system~\eqref{eq:syssingle} and to the dynamics of the extended multi-parameter system~\eqref{eq:extsys}, respectively.
\begin{lemma}\label{lem:big-small}
Let $(x_{\sigma},u_{\sigma})$ be the minimizer of \eqref{eq:singlecost} subject to~\eqref{eq:syssingle}, and let $(\bfx,u_{\varSigma})$ be the minimizer of \eqref{eq:extobj} subject to~\eqref{eq:extsys} with~$\bfx(0)=\clE x_{\sigma}(0)$. Then, there holds
\begin{align}
\clJ(\bfx-\clE x_{\sigma},u_{\varSigma} - u_{\sigma})
\le \|\bfA_{\varSigma} - \bfA_{\sigma}\|^2 \fkC_{M_s,x_{\sigma},p_{\sigma},\alpha,\beta_2,\bfB},\notag
\end{align}
where, with~$\clH_n\coloneqq L^2( (0,+\infty);\bbR^{n})$,
\begin{subequations}\label{eq:big-smallfkC}
\begin{align}
&\fkC_{M_s,x_{\sigma},p_{\sigma},\alpha,\beta_2,\bfB, N} \coloneqq M_s \| x_{\sigma}\|_{\clH_n}  \| p_{\sigma}\|_{\clH_n} +  (1+\sqrt{\alpha N})^2\fkC_{M_s,x_{\sigma},p_{\sigma},\beta_2,\bfB}^2,\label{eq:big-smallfkC1}\\
\mbox{with}\quad 
&\fkC_{M_s,x_{\sigma},p_{\sigma},\beta_2,\bfB}\coloneqq \max\{M_s\| x_{\sigma}\|_{\clH_n}+\| p_{\sigma}\|_{\clH_n}, M_s \| x_{\sigma}\|_{\clH_n} \beta_2^2 \|\bfB\|\},\label{eq:big-smallfkC0}
\end{align}
\end{subequations}
where~$\bfB$, $\beta_2$, and~$M_s$ are as in~\eqref{eq:extsys-op}, \eqref{eq:Pinorm}, and~\eqref{eq:w-est}.
\end{lemma}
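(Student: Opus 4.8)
The plan is to compare the optimality system of the extended problem with that of the single‑parameter problem, suitably lifted. Recall that the extended minimizer $(\bfx,u_\varSigma)$ of~\eqref{eq:extobj} subject to~\eqref{eq:extsys} carries the costate $\bfp=\bfPi_\varSigma\bfx$ solving~\eqref{eq:optcondext} with $\bfA_\varSigma$, while the single‑parameter minimizer $(x_\sigma,u_\sigma)$ of~\eqref{eq:singlecost} subject to~\eqref{eq:syssingle} carries a costate $p_\sigma\in W((0,+\infty);\bbR^n)$ with $\dot p_\sigma=-\clA_\sigma^\top p_\sigma-x_\sigma$ and $u_\sigma=-\tfrac1\alpha B^\top p_\sigma$ (their existence, hence the finiteness of $\|x_\sigma\|_{\clH_n},\|p_\sigma\|_{\clH_n}$, is part of the hypothesis). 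The elementary observation driving the proof is that, since all $N$ copies in $\bfB u$ receive the same input, the problem ``minimize $\clJ$ subject to $\dot\bfx=\bfA_\sigma\bfx+\bfB u$, $\bfx(0)=\clE x_\circ$'' collapses onto~\eqref{eq:singlecost}--\eqref{eq:syssingle}; hence its minimizer is $(\clE x_\sigma,u_\sigma)$, with costate $\bar\bfp_\sigma\coloneqq\tfrac1N\clE p_\sigma$ — the normalization being exactly what makes $u_\sigma=-\tfrac1\alpha\bfB^\top\bar\bfp_\sigma$ and $\dot{\bar\bfp}_\sigma=-\bfA_\sigma^\top\bar\bfp_\sigma-\tfrac1N\clE x_\sigma$ hold. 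Using $\tfrac1N\|\clE y\|^2=\|y\|^2$ one records $\|\clE x_\sigma\|_{L^2((0,+\infty);\bbR^{nN})}=\sqrt N\,\|x_\sigma\|_{\clH_n}$ and $\|\bar\bfp_\sigma\|_{L^2((0,+\infty);\bbR^{nN})}=\tfrac1{\sqrt N}\|p_\sigma\|_{\clH_n}$.

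Next I would subtract the two optimality systems. Setting $\bfz\coloneqq\bfx-\clE x_\sigma$, $\bfr\coloneqq\bfp-\bar\bfp_\sigma$, $w\coloneqq u_\varSigma-u_\sigma$, one gets $\bfz(0)=0$ (since $\bfx(0)=\clE x_\sigma(0)$), $\bfz(t)\to0$ and $\bfr(t)\to0$ as $t\to\infty$ (all optimal trajectories decay), $w=-\tfrac1\alpha\bfB^\top\bfr$, and
\[
\dot\bfz=\bfA_\varSigma\bfz+\bfB w+\bfE_1,\qquad \dot\bfr=-\bfA_\varSigma^\top\bfr-\tfrac1N\bfz+\bfE_2,
\]
with $\bfE_1\coloneqq(\bfA_\varSigma-\bfA_\sigma)\clE x_\sigma$ and $\bfE_2\coloneqq-(\bfA_\varSigma-\bfA_\sigma)^\top\bar\bfp_\sigma$, so $\|\bfE_1\|_{L^2}\le\|\bfA_\varSigma-\bfA_\sigma\|\sqrt N\,\|x_\sigma\|_{\clH_n}$ and $\|\bfE_2\|_{L^2}\le\|\bfA_\varSigma-\bfA_\sigma\|\tfrac1{\sqrt N}\|p_\sigma\|_{\clH_n}$. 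Then I would use the energy identity: integrating $\tfrac{\rmd}{\rmd t}\langle\bfz,\bfr\rangle$ over $(0,+\infty)$, the $\bfA_\varSigma$‑contributions cancel, $\langle\bfB w,\bfr\rangle=\langle w,\bfB^\top\bfr\rangle=-\alpha\|w\|^2$, and the boundary terms vanish, giving $\int_0^{+\infty}\big(\tfrac1N\|\bfz\|^2+\alpha\|w\|^2\big)\,\rmd t=\int_0^{+\infty}\big(\langle\bfE_1,\bfr\rangle+\langle\bfz,\bfE_2\rangle\big)\,\rmd t$, hence
\[
2\clJ(\bfz,w)\le\|\bfA_\varSigma-\bfA_\sigma\|\Big(\sqrt N\,\|x_\sigma\|_{\clH_n}\,\|\bfr\|_{L^2}+\tfrac1{\sqrt N}\|p_\sigma\|_{\clH_n}\,\|\bfz\|_{L^2}\Big).
\]

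It then remains to bound $\|\bfz\|_{L^2}$ and $\|\bfr\|_{L^2}$ by a constant times $\|\bfA_\varSigma-\bfA_\sigma\|$. Writing $\bfr=\bfPi_\varSigma\bfz+\bfd$ with the costate defect $\bfd\coloneqq\bfPi_\varSigma\clE x_\sigma-\bar\bfp_\sigma$, one has $\|\bfr\|_{L^2}\le\beta_2^2\|\bfz\|_{L^2}+\|\bfd\|_{L^2}$ (since $\|\bfPi_\varSigma\|=\beta_2^2$). Substituting $w=-\tfrac1\alpha\bfB^\top(\bfPi_\varSigma\bfz+\bfd)$ turns the $\bfz$‑equation into the closed‑loop form $\dot\bfz=(\bfA_\varSigma-\tfrac1\alpha\bfB\bfB^\top\bfPi_\varSigma)\bfz+\bfE_1-\tfrac1\alpha\bfB\bfB^\top\bfd$, $\bfz(0)=0$, so~\eqref{eq:g}--\eqref{eq:w-est} give $\|\bfz\|_{L^2}\le M_s\big(\|\bfE_1\|_{L^2}+\tfrac1\alpha\|\bfB\|^2\|\bfd\|_{L^2}\big)$. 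For $\bfd$ itself: differentiating and invoking the Riccati equation~\eqref{eq:bigRiccati} to eliminate the zeroth‑order term $\tfrac1N\Id_{nN}$ shows that $\bfd$ solves the backward equation $\dot\bfd=-(\bfA_\varSigma-\tfrac1\alpha\bfB\bfB^\top\bfPi_\varSigma)^\top\bfd-\bfPi_\varSigma\bfE_1+\bfE_2$ with $\bfd(t)\to0$; being governed by the transpose of the stable closed‑loop matrix, the analogue of~\eqref{eq:w-est} for the backward equation (same constant $M_s$, by transposition) yields $\|\bfd\|_{L^2}\le M_s\big(\beta_2^2\|\bfE_1\|_{L^2}+\|\bfE_2\|_{L^2}\big)$. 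Feeding these three estimates into the displayed energy bound, using the $L^2$‑sizes of $\bfE_1,\bfE_2$, and grouping the resulting products of $M_s,\beta_2,\|\bfB\|,\|x_\sigma\|_{\clH_n},\|p_\sigma\|_{\clH_n},\alpha,N$ — bounding the scalar prefactors by $\fkC_{M_s,x_\sigma,p_\sigma,\beta_2,\bfB}$ of~\eqref{eq:big-smallfkC0}, collecting the $\sqrt N,\alpha$ factors into $(1+\sqrt{\alpha N})^2$, and isolating the genuine cross term that produces the summand $M_s\|x_\sigma\|_{\clH_n}\|p_\sigma\|_{\clH_n}$ — gives the asserted inequality with the constant $\fkC$ of~\eqref{eq:big-smallfkC}.

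The step I expect to be the main obstacle is the control of the costate defect $\bfd=\bfPi_\varSigma\clE x_\sigma-\bar\bfp_\sigma$: unlike the state defect, which vanishes to first order for free, $\bfd$ is not small a priori, and the key is to realize that it solves a \emph{backward}‑in‑time ODE driven by the adjoint of the stable closed‑loop matrix with forcing genuinely of order $\|\bfA_\varSigma-\bfA_\sigma\|$ — this is precisely what upgrades the estimate from linear to quadratic in $\|\bfA_\varSigma-\bfA_\sigma\|$. Two minor technical points accompany it: one must invoke~\eqref{eq:w-est} both forward (for $\bfz$) and backward (for $\bfd$), which is legitimate since a matrix and its transpose have the same spectral abscissa and exponential bound; and some bookkeeping is needed to land exactly on the constant $\fkC$ of~\eqref{eq:big-smallfkC} rather than on some equivalent majorant.
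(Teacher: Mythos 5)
Your setup coincides with the paper's: the same lifting $\bar{\bfp}_\sigma=\tfrac1N\clE p_\sigma$ (the paper's $\delta\bfp=\bfp-\tfrac1N\clE p_\sigma$), the same difference system, and the same energy identity obtained by integrating $\tfrac{\rmd}{\rmd t}\langle\delta\bfx,\delta\bfp\rangle$, which yields exactly the paper's bound $\tfrac1N\|\delta\bfx\|^2_{\clH}+\alpha\|\delta u\|^2_{\clU}\le\|\delta\bfA\|\bigl(\|\clE x_\sigma\|_\clH\|\delta\bfp\|_\clH+\tfrac1N\|\clE p_\sigma\|_\clH\|\delta\bfx\|_\clH\bigr)$. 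Where you diverge is in closing this estimate. The paper bounds $\|\delta\bfp\|_\clH$ by a duality argument against solutions of~\eqref{eq:g} (testing $\delta\bfp$ with $\dot\bfy-\bfA_\varSigma\bfy+\bfB\bfB^\top\bfPi_\varSigma\bfy$), obtaining $\|\delta\bfp\|_\clH\le M_s(\tfrac1N\|\delta\bfA\|\|\clE p_\sigma\|_\clH+\tfrac1N\|\delta\bfx\|_\clH+\alpha\beta_2^2\|\bfB\delta u\|_\clH)$, and then absorbs the remaining $\|\delta\bfx\|_\clH$ and $\|\delta u\|_\clU$ terms into $\tfrac12\delta\clJ$ by Young's inequality. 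You instead decompose $\delta\bfp=\bfPi_\varSigma\delta\bfx+\bfd$ and estimate $\delta\bfx$ and the costate defect $\bfd$ separately through forward and backward closed-loop equations; your derivation of the backward equation for $\bfd$ via the Riccati equation is correct, and this route does prove an inequality of the form $\clJ(\delta\bfx,\delta u)\le C\|\delta\bfA\|^2$.

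The gap is that your route cannot deliver the constant actually asserted in the lemma, and this is not the ``bookkeeping'' you defer at the end. Substituting the feedback $\delta u=-\tfrac1\alpha\bfB^\top\delta\bfp$ into the state equation to get $\|\delta\bfx\|_\clH\le M_s(\|\bfE_1\|_\clH+\tfrac1\alpha\|\bfB\|^2\|\bfd\|_\clH)$ necessarily introduces a factor $\alpha^{-1}$, so your final constant contains terms of order $\alpha^{-1}\|\bfB\|^2M_s^2$ and blows up as $\alpha\to0^+$, whereas $\fkC_{M_s,x_\sigma,p_\sigma,\alpha,\beta_2,\bfB,N}$ in~\eqref{eq:big-smallfkC1} contains only the factor $(1+\sqrt{\alpha N})^2$ and terms with $\alpha$ appearing multiplicatively (inside $\alpha\beta_2^2\|\bfB\delta u\|$), hence stays bounded. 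The paper avoids the $\alpha^{-1}$ precisely by never solving the state equation in closed-loop form: it keeps $\alpha\|\delta u\|_\clU$ paired as it appears in the cost and lets the functional itself absorb those terms. Two smaller points: \eqref{eq:g} and the constant $M_s$ of~\eqref{eq:w-est} are stated for the matrix $\bfA_\varSigma-\bfB\bfB^\top\bfPi_\varSigma$, not for $\bfA_\varSigma-\tfrac1\alpha\bfB\bfB^\top\bfPi_\varSigma$ which governs your forward and backward equations, so you would need a separate stability constant there; and your use of $M_s$ for the backward adjoint equation is legitimate (transposition preserves the exponential bound) but should be stated as a separate estimate rather than a citation of~\eqref{eq:w-est}. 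To recover the lemma as stated, replace your two closed-loop estimates by the paper's duality bound for $\|\delta\bfp\|_\clH$ followed by the Young absorption.
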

\begin{proof}
The optimality conditions for the problem with a fixed parameter~$\sigma$ are
\begin{subequations}\label{eq:optcond}
 \begin{align}
\dot{x}_{\sigma} &= \clA_{\sigma} x_{\sigma} + B u_\sigma, \qquad x_{\sigma}(0) = x_{\circ},\label{eq:optcondA}\\
\dot{p}_{\sigma}&= -\clA_{\sigma}^\top p_{\sigma} - x_{\sigma},\\
u_\sigma &= -\frac{1}{\alpha}B^\top p_{\sigma},\label{eq:optcondE}
 \end{align}
  \end{subequations}
where~$x_\circ$ is a given initial state. Now, with~$K_\sigma\coloneqq-\frac1{\alpha}B^\top\Pi_\sigma$, where~$\Pi_{\sigma}\succ0$ solves
 \begin{align}\label{eq:smallRiccati}
  \clA_{\sigma}^\top\Pi_{\sigma} + \Pi_{\sigma}  \clA_{\sigma} - \frac{1}{\alpha}{\Pi_{\sigma} B B^\top \Pi_{\sigma}} + \Id_{n} = 0,
\end{align}
we arrive at the analogue  of~\eqref{eq:optcondextxpinW},
  \begin{align}\label{eq:optcondxpinW}
{x}_{\sigma}\in W((0,+\infty);\bbR^{n})\quad\mbox{and}\quad p_{\sigma}=K_\sigma x_\sigma\in W((0,+\infty);\bbR^{n}).
 \end{align}
Using the optimality conditions~\eqref{eq:optcondext} and \eqref{eq:optcond} and defining $\delta\bfx \coloneqq \bfx - \clE x_{\sigma}$, $\delta\bfp \coloneqq \bfp - \frac1 N \clE p_{\sigma}$, as well as $\delta u \coloneqq u_{\varSigma} - u_{\sigma}$, we obtain
\begin{subequations}\label{eq:optcondiff}
\begin{align}
\dot{\delta\bfx} &= \bfA_{\varSigma} \delta\bfx + (\bfA_{\varSigma} - \bfA_{\sigma}) \clE x_{\sigma} + \bfB \delta u, \qquad\delta\bfx(0) = 0,\label{eq:optcondiffA}\\
\dot{\delta\bfp} &= -\bfA^\top_{\varSigma} \delta\bfp - (\bfA_{\varSigma} - \bfA_{\sigma})^\top \frac1 N \clE p_{\sigma} - \frac1 N \delta \bfx, \label{eq:optcondiffC}\\
\delta u &= -\frac{1}{\alpha}\bfB^\top\delta\bfp.\label{eq:optcondiffE}
\end{align}
\end{subequations}
Recalling~\eqref{eq:optcondextxpinW} and~\eqref{eq:optcondxpinW} and denoting the Hilbert spaces~$\clH\coloneqq L^2( (0,+\infty);\bbR^{nN})$ and~$\clU\coloneqq L^2( (0,+\infty);\bbR^{m})$, we find the following identities, with~$\delta\bfA\coloneqq\bfA_{\varSigma} - \bfA_{\sigma}$, 
\begin{align}
&-\langle \dot{\delta\bfp}, \delta\bfx \rangle_{\clH} = \langle  \bfA_{\varSigma}^\top \delta\bfp, \delta\bfx \rangle_{\clH}+ \langle \delta\bfA^\top \frac1 N \clE p_{\sigma}, \delta\bfx \rangle_{\clH} + \frac1 N \|\delta\bfx\|^2_{\clH} \label{eq:diffA}
\intertext{and, using~$\langle\delta\bfp(0), \delta\bfx(0) \rangle_{\clH}=0$ and~$\lim_{t\to+\infty}\langle\delta\bfp(t), \delta\bfx(t) \rangle_{\clH}=0$, we also have}
&-\langle\delta\bfx ,\dot{\delta\bfp}\rangle_{\clH} =\langle\dot{\delta\bfx} ,\delta\bfp\rangle_{\clH}= \langle \bfA_{\varSigma}\delta\bfx,\delta\bfp \rangle_{\clH}+ \langle \delta\bfA\clE x_{\sigma}, \delta\bfp \rangle_{\clH} + \langle \bfB\delta u,\delta\bfp \rangle_{\clH}.\label{eq:diffB}
\end{align}
Subtracting~\eqref{eq:diffB} from~\eqref{eq:diffA}, and using~\eqref{eq:optcondiffE}, lead us to
\begin{align}
\delta\clJ\coloneqq\frac1 N \|\delta\bfx\|^2_{\clH} +  \alpha \| \delta u\|^2_{\clU}&= -\langle \delta\bfA^\top \frac1 N \clE p_{\sigma}, \delta\bfx \rangle_{\clH}+ \langle \delta\bfA\clE x_{\sigma}, \delta\bfp \rangle_{\clH}\notag\\
&\le \|\delta\bfA\|\Bigl( \|\clE x_{\sigma}\|_{\clH}\,\|\delta\bfp\|_{\clH} + \frac1 N \|\clE p_{\sigma}\|_{\clH}\,\|\delta\bfx\|_{\clH}\Bigr).
\label{eq:deltabound}
\end{align}

Next, we use a duality argument to estimate the norm of $\delta \bfp$. Let~$\bfg\in\clH$ be arbitrary and let~$\bfy=\bfy(\bfg)$ solve~\eqref{eq:g}, then we obtain
\begin{align}
\|\delta \bfp\|_{\clH} &= \sup_{\|g\|_{\clH}\le1} \langle \delta \bfp, \bfg\rangle_{\clH}
=\sup_{\|g\|_{\clH}\le1} \langle \delta \bfp, \dot{\bfy} - \bfA_{\varSigma} \bfy + \bfB \bfB^\top \bfPi_{\varSigma} \bfy \rangle_{\clH}\notag\\
&= \sup_{\|g\|_{\clH}\le1}\big( \langle -\dot{\delta \bfp} - \bfA_{\varSigma}^\top \delta \bfp, \bfy \rangle_{\clH}+ \langle \bfB^\top \delta \bfp, \bfB^\top \bfPi_{\varSigma} \bfy \rangle_{\clH} \big)\notag\\
&= \sup_{\|g\|_{\clH}\le1}\big(\langle \delta\bfA^\top \frac1 N \clE p_{\sigma}, \bfy \rangle_{\clH} + \frac1 N \langle \delta \bfx, \bfy\rangle_{\clH} - \alpha \langle \delta u, \bfB^\top \bfPi_{\varSigma} \bfy \rangle_{\clH} \big),\notag
\end{align}
where we used~\eqref{eq:optcondiffC} and~\eqref{eq:optcondiffE}.
Hence, using~\eqref{eq:Pinorm} and~\eqref{eq:w-est}, we arrive at
\begin{align}
\begin{split}
\|\delta \bfp\|_{\clH} &\le M_s \Bigl( \frac1 N \|\delta\bfA\|\, \|\clE p_{\sigma}\|_{\clH} + \frac1 N \|\delta \bfx\|_{\clH} +\alpha\beta_2^2 \|\bfB\delta u\|_{\clH}\Bigr),\!\label{eq:deltap}
\end{split}
\end{align}
where we used~$\langle \delta u, \bfB^\top \bfPi_{\varSigma} \bfy \rangle_{\bbR^{nN}}\le
\|\bfB\delta u\|_{\bfPi_{\varSigma}}\|\bfy\|_{\bfPi_{\varSigma}}$. By~\eqref{eq:deltabound} and~\eqref{eq:deltap} we find
\begin{align}
\delta\clJ&\le \|\delta\bfA\| \, \|\clE x_{\sigma}\|_{\clH}M_s\Big(\frac1 N \|\delta\bfA\|\, \|\clE p_{\sigma}\|_{\clH} + \frac1 N \|\delta \bfx\|_{\clH} + \alpha\beta_2^2\, \|\bfB\|\, \|\delta u\|_{\clU}\Big)\notag\\
&\quad+ \frac1 N\|\delta\bfA\|\, \|\clE p_{\sigma}\|_{\clH}\,\|\delta\bfx\|_{\clH}\notag\\
&\le \|\delta\bfA\|^2 \, \frac1 N M_s\,\|\clE x_{\sigma}\|_{\clH} \,\|\clE p_{\sigma}\|_{\clH} 
+ \|\delta\bfA\|\,\frac1 N \|\clE p_{\sigma}\|_{\clH}\,\|\delta\bfx\|_{\clH}\notag\\
&\quad+ \|\delta\bfA\|\,M_s\,\|\clE x_{\sigma}\|_{\clH}\,\Big(\frac1 N\|\delta \bfx\|_{\clH}+ \alpha\beta_2^2\, \|\bfB\|\, \|\delta u\|_{\clU}\Big)\notag\\
&\le \|\delta\bfA\|^2 \frac1 N\,M_s\, \|\clE x_{\sigma}\|_{\clH} \,\|\clE p_{\sigma}\|_{\clH}+ \|\delta\bfA\|\sqrt{ N}\fkC_{M_s,x_{\sigma},p_{\sigma},\beta_2,\bfB}\Big(\frac1 N\|\delta \bfx\|_{\clH} + \alpha\|\delta u\|_{\clU}\Big),\notag
\end{align}
with $\fkC_{M_s,x_{\sigma},p_{\sigma},\beta_2,\bfB}$ as in~\eqref{eq:big-smallfkC0} (where we used~$\|\clE x_{\sigma}\| = \sqrt{N} \|x_{\sigma}\|$, $\|\clE p_{\sigma}\| = \sqrt{N} \|p_{\sigma}\|$).

Using~$\frac1 N\|\delta \bfx\|_{\clH} + \alpha\|\delta u\|_{\clU}\le\frac{1+\sqrt{\alpha N}}{\sqrt N}(\frac1{\sqrt N}\|\delta \bfx\|_{\clH} + \sqrt{\alpha}\|\delta u\|_{\clU})$  we further estimate
\begin{align}
\delta\clJ
&\le \|\delta\bfA\|^2 \frac1 N\,M_s\, \|\clE x_{\sigma}\|_{\clH} \, \|\clE p_{\sigma}\|_{\clH} + \|\delta\bfA\|^2  \fkC_{M_s,x_{\sigma},p_{\sigma},\beta_2,\bfB}^2 (1+\sqrt{\alpha N})^2
 \notag\\
&\quad+ \frac{1}{4}\bigg(\frac{1}{\sqrt{N}}\|\delta \bfx\|_{\clH} + \sqrt\alpha\|\delta u\|_{\clU}\bigg)^2\notag\\
&\le \|\delta\bfA\|^2\Bigl( \frac1 NM_s\|\clE x_{\sigma}\|_{\clH} \, \|\clE p_{\sigma}\|_{\clH} + (1+\sqrt{\alpha N})^2 \fkC_{M_s,x_{\sigma},p_{\sigma},\beta_2,\bfB}^2\Bigr)
+ \frac{1}{2}\delta\clJ,\notag
\end{align}
which leads to
\begin{align}
\frac{1}{2} \delta\clJ\le \|\delta\bfA\|^2 \bigg(M_s \| x_{\sigma}\|_{L^2( (0,+\infty);\bbR^{n})} \, \| p_{\sigma}\|_{L^2( (0,+\infty);\bbR^{n})} + (1+\sqrt{\alpha N})^2 \fkC_{M_s,x_{\sigma},p_{\sigma},\beta_2,\bfB}^2\bigg),\notag
\end{align}
which ends the proof. Note that~$\frac{1}{2}\delta\clJ=\clJ(\delta\bfx,\delta u)$, by definition of~$\delta\clJ$ in~\eqref{eq:deltabound}.\end{proof}

\begin{corollary}\label{coro:big-small1}
Let $(\bfx,u_{\varSigma})$ be the minimizer of~\eqref{eq:extobj} subject to~\eqref{eq:extsys} and let $(x_{\sigma},u_{\sigma})$ be the minimizer of~\eqref{eq:singlecost} subject to \eqref{eq:syssingle} with~$\bfx(0)=\clE x_{\sigma}(0)$. Then, there holds
\begin{align}\notag
0\le \clJ(\clE x_{\sigma}, u_{\sigma})-\clJ(\bfx,u_{\varSigma}) 
\le \|\bfA_{\varSigma} - \bfA_{\sigma}\| \fkC_{\bfx,x_{\sigma},u_{\varSigma},u_{\sigma}, N} \sqrt{\fkC_{M_s,x_{\sigma},p_{\sigma},\alpha,\beta_2,\bfB, N}},
\end{align}
where~$\fkC_{M_s,x_{\sigma},p_{\sigma},\alpha,\beta_2,\bfB, N}$ is as in~\eqref{eq:big-smallfkC1} and
\begin{equation}\label{eq:big-small1fkC}
\fkC_{\bfx,x_{\sigma},u_{\varSigma},u_{\sigma}, N} \coloneqq \frac{1}{\sqrt{ N}}\|\bfx + \clE x_{\sigma} \|_{L^2( (0,+\infty);\bbR^{nN})}+\sqrt{\alpha}\| u_{\varSigma} + u_{\sigma}\|_{L^2( (0,+\infty);\bbR^{m})}.
\end{equation}
\end{corollary}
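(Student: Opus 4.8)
The plan is to treat $\clJ$ as one-half of a squared Hilbert-space norm and difference it by polarization, the quantitative input being Lemma~\ref{lem:big-small}. Abbreviate $\clH\coloneqq L^2((0,+\infty);\bbR^{nN})$, $\clU\coloneqq L^2((0,+\infty);\bbR^m)$, and set $\delta\bfx\coloneqq\bfx-\clE x_\sigma$, $\delta u\coloneqq u_\varSigma-u_\sigma$ as in that lemma. Using $\|a\|^2-\|b\|^2=\langle a-b,a+b\rangle$ in $\clH$ (on the $\tfrac1N$-weighted states) and in $\clU$ (on the $\alpha$-weighted controls),
\[
\clJ(\clE x_\sigma,u_\sigma)-\clJ(\bfx,u_\varSigma)=\tfrac12\Bigl(\tfrac1N\langle\clE x_\sigma-\bfx,\,\clE x_\sigma+\bfx\rangle_{\clH}+\alpha\langle u_\sigma-u_\varSigma,\,u_\sigma+u_\varSigma\rangle_{\clU}\Bigr).
\]
By Cauchy--Schwarz in $\clH$ and $\clU$, and then the two-term Cauchy--Schwarz $a_1b_1+a_2b_2\le\sqrt{a_1^2+a_2^2}\sqrt{b_1^2+b_2^2}$ with $a_1=\tfrac1{\sqrt N}\|\delta\bfx\|_{\clH}$, $a_2=\sqrt\alpha\|\delta u\|_{\clU}$, $b_1=\tfrac1{\sqrt N}\|\bfx+\clE x_\sigma\|_{\clH}$, $b_2=\sqrt\alpha\|u_\varSigma+u_\sigma\|_{\clU}$, the absolute value of the right-hand side is at most
\[
\tfrac12\sqrt{\tfrac1N\|\delta\bfx\|_{\clH}^2+\alpha\|\delta u\|_{\clU}^2}\;\sqrt{\tfrac1N\|\bfx+\clE x_\sigma\|_{\clH}^2+\alpha\|u_\varSigma+u_\sigma\|_{\clU}^2}.
\]

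Next I would feed Lemma~\ref{lem:big-small} into the first factor: $\tfrac1N\|\delta\bfx\|_{\clH}^2+\alpha\|\delta u\|_{\clU}^2=2\clJ(\delta\bfx,\delta u)\le2\|\bfA_\varSigma-\bfA_\sigma\|^2\,\fkC_{M_s,x_\sigma,p_\sigma,\alpha,\beta_2,\bfB,N}$. For the second factor, $s^2+t^2\le(s+t)^2$ for $s,t\ge0$ together with the definition~\eqref{eq:big-small1fkC} gives $\tfrac1N\|\bfx+\clE x_\sigma\|_{\clH}^2+\alpha\|u_\varSigma+u_\sigma\|_{\clU}^2\le\fkC_{\bfx,x_\sigma,u_\varSigma,u_\sigma,N}^2$. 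Since $\tfrac12\sqrt2\le1$, multiplying the two square roots yields
\[
\bigl|\clJ(\clE x_\sigma,u_\sigma)-\clJ(\bfx,u_\varSigma)\bigr|\le\|\bfA_\varSigma-\bfA_\sigma\|\,\fkC_{\bfx,x_\sigma,u_\varSigma,u_\sigma,N}\sqrt{\fkC_{M_s,x_\sigma,p_\sigma,\alpha,\beta_2,\bfB,N}},
\]
which is the asserted upper bound (and, in fact, the same bound for the absolute value).

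For the remaining non-negativity $\clJ(\bfx,u_\varSigma)\le\clJ(\clE x_\sigma,u_\sigma)$ I would argue from optimality: $(\bfx,u_\varSigma)$ minimizes $\clJ$ over all trajectories of the extended dynamics~\eqref{eq:extsys} with initial datum $\clE x_\sigma(0)$, so it suffices to exhibit an admissible pair for~\eqref{eq:extsys} whose cost does not exceed $\clJ(\clE x_\sigma,u_\sigma)=\tfrac12\langle x_\sigma(0),\Pi_\sigma x_\sigma(0)\rangle$. The obvious candidate $(\clE x_\sigma,u_\sigma)$ is \emph{not} admissible for~\eqref{eq:extsys} (it solves that system with $\bfA_\sigma$ in place of $\bfA_\varSigma$), so one would instead feed a control built from $x_\sigma$ — for instance $u_\sigma$, or the feedback $-\tfrac1\alpha B^\top\Pi_\sigma$ lifted through $\clE$ — into~\eqref{eq:extsys} and estimate the resulting extended state in terms of $x_\sigma$, or else compare $\bfPi_\varSigma$ with a block-lift of $\Pi_\sigma$ via the Riccati equations~\eqref{eq:bigRiccati} and~\eqref{eq:smallRiccati}. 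This is the step I expect to be the main obstacle and the one I would scrutinize most carefully, in particular whether the inequality truly holds in the stated direction; the estimate above is insensitive to the sign and already controls $|\clJ(\clE x_\sigma,u_\sigma)-\clJ(\bfx,u_\varSigma)|$.
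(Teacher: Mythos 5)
Your derivation of the quantitative bound is essentially the paper's own argument: the same polarization identity $\|a\|^2-\|b\|^2=\langle a-b,a+b\rangle$ on the weighted state and control terms, Cauchy--Schwarz, and then Lemma~\ref{lem:big-small} applied to $\clJ(\delta\bfx,\delta u)$. The paper packages the elementary step slightly differently (via $ab+cd\le(a+c)(b+d)$ and $\tfrac{\sqrt a}{2}+\tfrac{\sqrt b}{2}\le\sqrt{(a+b)/2}$ instead of your two-term Cauchy--Schwarz), arriving at exactly the factor $\fkC_{\bfx,x_{\sigma},u_{\varSigma},u_{\sigma}, N}\sqrt{\clJ(\delta\bfx,\delta u)}$; your route even yields a marginally sharper constant. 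That part is correct and matches the paper.

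On the non-negativity, your hesitation is well placed, and you should know that the paper's proof of this step is precisely the one-line optimality argument you rejected: it asserts $\clJ(\bfx,u_{\varSigma})\le\clJ(\clE x_{\sigma},u_{\sigma})$ ``because $(\bfx,u_{\varSigma})$ minimizes $\clJ$''. As you observe, $(\clE x_{\sigma},u_{\sigma})$ satisfies $\tfrac{\rmd}{\rmd t}\clE x_{\sigma}=\bfA_{\sigma}\clE x_{\sigma}+\bfB u_{\sigma}$, not \eqref{eq:extsys}, so it is not a feasible comparison pair and the optimality of $(\bfx,u_{\varSigma})$ gives nothing. Moreover the inequality can genuinely fail: for scalar systems $\clA_{\sigma_i}=\sigma_i$, $B=1$, with $\sigma=\sigma_1=0$ and $\sigma_2>0$ large, any control rendering the second block square-integrable must satisfy $\int_0^{+\infty}u^2\,\rmd t\ge 2\sigma_2 x_\circ^2$, so $\tfrac12\langle\clE x_\circ,\bfPi_{\varSigma}\clE x_\circ\rangle\ge\alpha\sigma_2x_\circ^2$ eventually exceeds $\tfrac12\langle x_\circ,\Pi_{\sigma}x_\circ\rangle=\tfrac{\sqrt\alpha}{2}x_\circ^2$. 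So none of the repair strategies you list will close this step in general. What does hold, and what is actually needed downstream in Theorem~\ref{thm:final} (which works with $|\clJ(\clE x_\sigma,u_\sigma)-\clJ(\bfx,u_{\varSigma})|$), is the two-sided estimate you proved; record that instead of the sign claim.
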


\begin{proof} The inequality~$0\le \clJ(\clE x_{\sigma}, u_{\sigma})-\clJ(\bfx,u_{\varSigma})$ holds because~$(\bfx,u_{\varSigma})$ minimizes~$\clJ$. Next, with~$\clH\coloneqq L^2( (0,+\infty);\bbR^{nN})$ and~$\clU\coloneqq L^2( (0,+\infty);\bbR^{m})$, there holds
\begin{align}
&\clJ(\clE x_{\sigma}, u_{\sigma})-\clJ(\bfx,u_{\varSigma})\notag\\
 &\quad= \Bigl|\frac{1}{2 N} \langle \bfx - \clE x_{\sigma}, \bfx + \clE x_{\sigma} \rangle_{\clH} + \frac{\alpha}{2} \langle u_{\varSigma} - u_{\varSigma,\sigma} , u_{\varSigma} + u_{\varSigma,\sigma}\rangle_{\clU}\Bigr| \notag\\
&\quad\le \frac{1}{2 N} \| \bfx - \clE x_{\sigma}\|_{\clH} \|\bfx + \clE x_{\sigma} \|_{\clH}+ \frac{\alpha}{2} \| u_{\varSigma} - u_{\sigma} \|_{\clU}\| u_{\varSigma} + u_{\sigma}\|_{\clU}\notag\\
&\quad\le \Big(\frac{1}{\sqrt{ N}}\|\bfx + \clE x_{\sigma} \|_{\clH}+\sqrt{\alpha}\| u_{\varSigma} + u_{\sigma}\|_{\clU}\Big) \Big( \frac{1}{2\sqrt{ N}} \| \bfx - \clE x_{\sigma}\|_{\clH} +  \frac{\sqrt{\alpha}}{2} \| u_{\varSigma} - u_{\sigma} \|_{\clU}\Big)\notag\\ 
&\quad= \fkC_{\bfx,x_{\sigma},u_{\varSigma},u_{\sigma}, N} \sqrt{\clJ(\bfx - \clE x_{\sigma},u_{\varSigma}-u_{\sigma})}\le \fkC_{\bfx,x_{\sigma},u_{\varSigma},u_{\sigma}, N} \|\bfA_{\varSigma} - \bfA_{\sigma}\| \sqrt{C},\notag
\end{align}
with~$\fkC_{\bfx,x_{\sigma},u_{\varSigma},u_{\sigma}, N}$ as in~\eqref{eq:big-small1fkC} and~$C=\fkC_{M_s,x_{\sigma},p_{\sigma},\alpha,\beta_2,\bfB, N}$ as in~\eqref{eq:big-smallfkC1}. Here we have used the inequalities $ab+cd \le (a+c) (b+d)$ and~$\frac{\sqrt{a}}{2} + \frac{\sqrt{b}}{2} \le \sqrt{\frac{a+b}{2}}$, holding for real numbers $a,b,c,d\geq 0$.\end{proof}

\begin{theorem}\label{thm:big-small2}
Let $(\bfx,u_{\varSigma})$ be the minimizer of functional \eqref{eq:extobj} subject to~\eqref{eq:extsys} and let $(x_{\sigma},u_{\sigma})$ be the minimizer of \eqref{eq:singlecost} subject to \eqref{eq:syssingle}. Then, for each $\vartheta> \|\bfA_{\varSigma} - \bfA_{\sigma}\|$,
\begin{align}\notag
& 0\le\clJ(\clE x_{\sigma},u_{\sigma})-\clJ(\bfx,u_{\varSigma}) \le \|\delta\bfA \| \frac{2}{\vartheta-\|\delta\bfA \|}\bigg(\vartheta^2\fkC_{M_s,x_{\sigma},p_{\sigma},\alpha,\beta_2,\bfB, N} + \clJ(\bfx,u_{\varSigma}) \bigg),
\end{align}
where~$\|\delta\bfA \|\coloneqq\|\bfA_{\varSigma} - \bfA_{\sigma}\|$ and~$\fkC_{M_s,x_{\sigma},p_{\sigma},\alpha,\beta_2,\bfB, N}$ is as in~\eqref{eq:big-smallfkC1}.
\end{theorem}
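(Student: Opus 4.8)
The plan is to combine Corollary~\ref{coro:big-small1} and Lemma~\ref{lem:big-small}, and to trade the data-dependent constant $\fkC_{\bfx,x_{\sigma},u_{\varSigma},u_{\sigma}, N}$ (which still involves $\bfx$ and $u_{\varSigma}$) for the single quantity $\clJ(\bfx,u_{\varSigma})$, at the price of the free parameter $\vartheta$. Throughout I write $\clH\coloneqq L^2((0,+\infty);\bbR^{nN})$, $\clU\coloneqq L^2((0,+\infty);\bbR^{m})$, $a\coloneqq\|\delta\bfA\|=\|\bfA_{\varSigma}-\bfA_{\sigma}\|$, $J_b\coloneqq\clJ(\bfx,u_{\varSigma})$, $J_s\coloneqq\clJ(\clE x_{\sigma},u_{\sigma})$, $J_d\coloneqq\clJ(\bfx-\clE x_{\sigma},u_{\varSigma}-u_{\sigma})$ and $C\coloneqq\fkC_{M_s,x_{\sigma},p_{\sigma},\alpha,\beta_2,\bfB, N}$. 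The lower bound $0\le J_s-J_b$ is already contained in Corollary~\ref{coro:big-small1}, and Lemma~\ref{lem:big-small} gives $J_d\le a^{2}C$, hence $\sqrt{J_d}\le a\sqrt C$; so only the upper bound remains. Moreover I may assume $a>0$, since for $a=0$ we then get $J_d=0$ and, by Corollary~\ref{coro:big-small1}, $J_s=J_b$, so that both sides of the asserted inequality vanish.

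First I would estimate the constant $\fkC_{\bfx,x_{\sigma},u_{\varSigma},u_{\sigma}, N}$ appearing in Corollary~\ref{coro:big-small1}. Writing $\bfx+\clE x_{\sigma}=2\bfx-(\bfx-\clE x_{\sigma})$ and $u_{\varSigma}+u_{\sigma}=2u_{\varSigma}-(u_{\varSigma}-u_{\sigma})$, applying the triangle inequality in $\clH$ and $\clU$, and using the elementary bound $\tfrac1{\sqrt N}\|\bfz\|_{\clH}+\sqrt\alpha\,\|w\|_{\clU}\le\sqrt2\,\bigl(\tfrac1N\|\bfz\|_{\clH}^{2}+\alpha\|w\|_{\clU}^{2}\bigr)^{1/2}=2\sqrt{\clJ(\bfz,w)}$ (valid for $\bfz\in\clH$, $w\in\clU$), I would obtain $\fkC_{\bfx,x_{\sigma},u_{\varSigma},u_{\sigma}, N}\le 4\sqrt{J_b}+2\sqrt{J_d}$. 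Inserting this, together with $\sqrt{J_d}\le a\sqrt C$, into the estimate $J_s-J_b\le a\,\fkC_{\bfx,x_{\sigma},u_{\varSigma},u_{\sigma}, N}\sqrt C$ of Corollary~\ref{coro:big-small1} yields $J_s-J_b\le 4a\sqrt{J_bC}+2a^{2}C$.

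The last step is a suitably calibrated Young inequality: for every $\mu>0$ one has $4\sqrt{J_b}\,(a\sqrt C)\le\tfrac2\mu J_b+2\mu a^{2}C$, whence $J_s-J_b\le\tfrac2\mu J_b+(2\mu+2)a^{2}C$. Choosing $\mu\coloneqq\tfrac{\vartheta-a}{a}>0$ — which is precisely where the hypothesis $\vartheta>\|\delta\bfA\|$ enters — gives $\tfrac2\mu=\tfrac{2a}{\vartheta-a}$ and $(2\mu+2)a^{2}=2\vartheta a$, so $J_s-J_b\le\tfrac{2a}{\vartheta-a}J_b+2\vartheta a\,C$. Finally $\vartheta-a\le\vartheta$ yields $2\vartheta a\le\tfrac{2\vartheta^{2}}{\vartheta-a}\,a$, hence $J_s-J_b\le\tfrac{2a}{\vartheta-a}\bigl(J_b+\vartheta^{2}C\bigr)$, which is exactly the claimed inequality once the definitions of $a$, $J_b$ and $C$ are recalled.

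The argument is essentially bookkeeping, and the only genuinely delicate point is the calibration of the Young parameter $\mu=\tfrac{\vartheta-a}{a}$: it is chosen so that the coefficient of $\clJ(\bfx,u_{\varSigma})$ comes out exactly as $\tfrac{2a}{\vartheta-a}$, while the leftover term $2\vartheta a\,C$ can still be absorbed into $\tfrac{2\vartheta^{2}}{\vartheta-a}\,aC$ using only $\vartheta-a\le\vartheta$. One must also check that the rather crude triangle/Cauchy--Schwarz step $\fkC_{\bfx,x_{\sigma},u_{\varSigma},u_{\sigma}, N}\le 4\sqrt{J_b}+2\sqrt{J_d}$ does not force a constant larger than the $2$ in the statement, and it does not. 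A more self-contained variant, bypassing Corollary~\ref{coro:big-small1}, would expand the quadratic functional $\clJ$ around $(\bfx,u_{\varSigma})$, namely $J_s=J_b+J_d+(\text{cross term})$ with the cross term bounded by $2\sqrt{J_bJ_d}$ via Cauchy--Schwarz, giving $J_s-J_b\le 2\sqrt{J_bJ_d}+J_d$; the same Young-type manipulation then produces the same bound with slightly smaller numerical constants.
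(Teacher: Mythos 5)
Your proof is correct and lands exactly on the stated constant. It follows the same skeleton as the paper's argument --- start from Corollary~\ref{coro:big-small1}, bound the data-dependent factor $\fkC_{\bfx,x_{\sigma},u_{\varSigma},u_{\sigma},N}$ by square roots of cost functionals, and calibrate a Young inequality with the free parameter tied to $\vartheta$ --- but it differs in one genuine way: how the unknown quantity $\clJ(\clE x_{\sigma},u_{\sigma})$ is eliminated from the right-hand side. The paper splits $\bfx+\clE x_{\sigma}$ into $\bfx$ and $\clE x_{\sigma}$, obtaining $\fkC\le 2\sqrt{\clJ(\bfx,u_{\varSigma})}+2\sqrt{\clJ(\clE x_{\sigma},u_{\sigma})}$, and then absorbs the resulting $\varepsilon\|\delta\bfA\|\,\clJ(\clE x_{\sigma},u_{\sigma})$ term into the left-hand side, which produces the factor $(1-\varepsilon\|\delta\bfA\|)^{-1}$ and, after $\varepsilon=\vartheta^{-1}$, the prefactor $2/(\vartheta-\|\delta\bfA\|)$. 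You instead write $\bfx+\clE x_{\sigma}=2\bfx-(\bfx-\clE x_{\sigma})$, so that the extra term is $\sqrt{\clJ(\delta\bfx,\delta u)}$ rather than $\sqrt{\clJ(\clE x_{\sigma},u_{\sigma})}$, and then invoke Lemma~\ref{lem:big-small} a second time (the paper's proof uses it only implicitly through Corollary~\ref{coro:big-small1}) to control it by $\|\delta\bfA\|\sqrt{C}$; no absorption into the left-hand side is needed, and the denominator $\vartheta-\|\delta\bfA\|$ arises purely from your choice of Young parameter $\mu=(\vartheta-\|\delta\bfA\|)/\|\delta\bfA\|$. Your handling of the degenerate case $\|\delta\bfA\|=0$ is needed for that choice of $\mu$ and is done correctly. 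The closing remark is also right: expanding $\clJ$ quadratically around $(\bfx,u_{\varSigma})$ and bounding the cross term by $2\sqrt{\clJ(\bfx,u_{\varSigma})\clJ(\delta\bfx,\delta u)}$ bypasses Corollary~\ref{coro:big-small1} entirely and improves the leading constant from $2$ to $1$, which suggests the theorem's constant is not sharp.
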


\begin{proof}
Let~$\clH\coloneqq L^2( (0,+\infty);\bbR^{nN})$ and~$\clU\coloneqq L^2( (0,+\infty);\bbR^{m})$, and recall from Corollary~\ref{coro:big-small1}, with~$\fkC_{\bfx,x_{\sigma},u_{\varSigma},u_{\sigma}, N}$ as in~\eqref{eq:big-small1fkC} and~$C\coloneqq\fkC_{M_s,x_{\sigma},p_{\sigma},\alpha,\beta_2,\bfB, N}$ as in~\eqref{eq:big-smallfkC1}, that
\begin{align}\label{JbfxJx-1}
0\le \clJ(\clE x_{\sigma},u_{\sigma})-\clJ(\bfx,u_{\varSigma}) \le \fkC_{\bfx,x_{\sigma},u_{\varSigma},u_{\sigma}, N} \|\bfA_{\varSigma} - \bfA_{\sigma}\| \sqrt{C}.
\end{align}
Further, by triangle and Young inequalities,
\begin{align}
&\fkC_{\bfx,x_{\sigma},u_{\varSigma},u_{\sigma}, N}\sqrt{C}
\le\Bigl( \frac{1}{ \sqrt N}\|\bfx\|_{\clH} + \sqrt\alpha\|u_{\varSigma}\|_{\clU} + \frac{1}{ \sqrt N}\|\clE x_{\sigma} \|_{\clH}+ \sqrt\alpha\|u_{\sigma}\|_{\clU}\Bigr)\sqrt{C}
\notag\\
&\quad\le\Bigl( \sqrt{4\clJ(\bfx,u_{\varSigma})} + \sqrt{4\clJ(\clE x_{\sigma},u_{\sigma})}\Bigr)\sqrt{C}\le\frac\varepsilon2\Bigl( \sqrt{\clJ(\bfx,u_{\varSigma})} + \sqrt{\clJ(\clE x_{\sigma},u_{\sigma})}\Bigr)^2+ \frac{2}\varepsilon C\notag\\
&\quad\le\varepsilon\clJ(\bfx,u_{\varSigma}) +\varepsilon \clJ(\clE x_{\sigma},u_{\sigma})+ \frac{2C}\varepsilon,\label{JbfxJx-2}
\end{align}
for an arbitrary~$\varepsilon>0$, where we used~$a+b=\sqrt{(a+b)^2}\le\sqrt{2(a^2+b^2)}$, for nonnegative~$a,b$ in the second step. By combining~\eqref{JbfxJx-1} and~\eqref{JbfxJx-2}, we conclude that
\begin{align}
& (1-\varepsilon\|\bfA_{\varSigma} - \bfA_{\sigma}\|)\Bigl( \clJ(\clE x_{\sigma},u_{\sigma})-\clJ(\bfx,u_{\varSigma}) \Bigr)\le \|\bfA_{\varSigma} - \bfA_{\sigma}\| \bigg(2 \varepsilon \clJ(\bfx,u_{\varSigma}) +\frac{ 2C}{\varepsilon}\bigg),\notag
\end{align}
and, we end the proof by taking $\varepsilon =\vartheta^{-1}$ and using~$\frac{2}{1-\vartheta^{-1}\|\delta\bfA\|}= \frac{2\vartheta}{\vartheta-\|\delta\bfA\|}$.
\end{proof}

\subsection{The Cost of the Proposed Feedback Control} 
The following result compares the minimal cost associated with the solution of the extended system to the cost  associated with the solution resulting from the feedback control $u_{\varSigma,\sigma}$ that we propose in~\eqref{eq:fbcontrol}. It provides a bound for the difference of these costs, i.e., it quantifies the suboptimality of $(u_{\varSigma,\sigma},x_{\varSigma,\sigma})$,
under the assumption that the solution $x_{\varSigma,\sigma}$ of~\eqref{eq:avclsys} is in~$W( (0,+\infty),\bbR^{n})$. Later on, in Corollary~\ref{coro:stableFeedback}, we provide conditions ensuring that $x_{\varSigma,\sigma} \in W( (0,+\infty),\bbR^{n})$.

\begin{lemma}\label{lem:big-rob}
Let $(u_{\varSigma},\bfx)$ be the minimizer of the extended system \eqref{eq:extobj} subject to \eqref{eq:extsys}, and for a given parameter $\sigma$ let $(u_{\varSigma,\sigma},x_{\varSigma,\sigma})$ be given by \eqref{eq:fbcontrol} and \eqref{eq:avclsys}, with~$\bfx(0)=\clE x_{\varSigma,\sigma}(0)$. Further, assume that $x_{\varSigma,\sigma} \in W( (0,+\infty),\bbR^{n})$. Then, there holds
\begin{align}
0 \le \clJ(\clE x_{\varSigma,\sigma},u_{\varSigma,\sigma}) - \clJ(\bfx,u_{\varSigma})
\le \beta_2^2\,\|\bfA_{\sigma} - \bfA_{\varSigma}\|\,\|\clE x_{\varSigma,\sigma}\|^2_{L^2( (0,+\infty),\bbR^{nN})}.\notag
\end{align}
\end{lemma}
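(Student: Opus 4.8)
The plan is to run a completion-of-squares argument for the pair $(\clE x_{\varSigma,\sigma},u_{\varSigma,\sigma})$ against the value function $V(\bfz)\coloneqq\tfrac12\langle\bfz,\bfPi_{\varSigma}\bfz\rangle$ of the extended problem, recalling that $\clJ(\bfx,u_{\varSigma})=V(\clE x_{\circ})$ with $x_{\circ}=x_{\varSigma,\sigma}(0)$ (the minimal value of $\clJ$). The key preliminary observation is that, because $\bfA_{\sigma}$ is the block-diagonal matrix with $N$ identical blocks $\clA_{\sigma}$ and $\bfB$ is the stacked copy of $B$, the extension operator intertwines them, $\clE\clA_{\sigma}=\bfA_{\sigma}\clE$ and $\clE B=\bfB$; applying $\clE$ to~\eqref{eq:avclsys} therefore yields
\begin{align}\notag
\frac{\rmd}{\rmd t}\,\clE x_{\varSigma,\sigma}=\bfA_{\sigma}\,\clE x_{\varSigma,\sigma}+\bfB u_{\varSigma,\sigma},\qquad \clE x_{\varSigma,\sigma}(0)=\clE x_{\circ}.
\end{align}
In particular $\clE x_{\varSigma,\sigma}$ solves the extended dynamics with $\bfA_{\varSigma}$ replaced by $\bfA_{\sigma}$, and $(x_{\varSigma,\sigma},u_{\varSigma,\sigma})$ solves~\eqref{eq:syssingle}.

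Next I would differentiate $t\mapsto V(\clE x_{\varSigma,\sigma}(t))$ along this trajectory (legitimate since $x_{\varSigma,\sigma}\in W$ is absolutely continuous and $V$ is quadratic), insert $\bfA_{\varSigma}^{\top}\bfPi_{\varSigma}+\bfPi_{\varSigma}\bfA_{\varSigma}=\tfrac1\alpha\bfPi_{\varSigma}\bfB\bfB^{\top}\bfPi_{\varSigma}-\tfrac1N\Id_{nN}$ from~\eqref{eq:bigRiccati}, use $u_{\varSigma,\sigma}=-\tfrac1\alpha\bfB^{\top}\bfPi_{\varSigma}\clE x_{\varSigma,\sigma}$ together with $\tfrac1N\|\clE y\|^{2}=\|y\|^{2}$, and collect terms. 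The cancellations leave the pointwise identity
\begin{align}\notag
\frac{\rmd}{\rmd t}V(\clE x_{\varSigma,\sigma})+\frac12\bigl(\|x_{\varSigma,\sigma}\|^{2}+\alpha\|u_{\varSigma,\sigma}\|^{2}\bigr)=\bigl\langle\bfPi_{\varSigma}\clE x_{\varSigma,\sigma},(\bfA_{\sigma}-\bfA_{\varSigma})\clE x_{\varSigma,\sigma}\bigr\rangle.
\end{align}
Since $x_{\varSigma,\sigma}\in W((0,+\infty),\bbR^{n})$ one has $x_{\varSigma,\sigma}(t)\to0$, hence $V(\clE x_{\varSigma,\sigma}(t))\to0$; moreover $x_{\varSigma,\sigma}$ and $u_{\varSigma,\sigma}$ are in $L^{2}$, so the running cost integrates to $\clJ(\clE x_{\varSigma,\sigma},u_{\varSigma,\sigma})$ and the right-hand side is in $L^{1}$. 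Integrating over $(0,+\infty)$ and using $V(\clE x_{\varSigma,\sigma}(0))=\clJ(\bfx,u_{\varSigma})$ then gives the exact identity
\begin{align}\notag
\clJ(\clE x_{\varSigma,\sigma},u_{\varSigma,\sigma})-\clJ(\bfx,u_{\varSigma})=\int_{0}^{+\infty}\bigl\langle\bfPi_{\varSigma}\clE x_{\varSigma,\sigma},(\bfA_{\sigma}-\bfA_{\varSigma})\clE x_{\varSigma,\sigma}\bigr\rangle\,\rmd t.
\end{align}

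The upper bound is then immediate: estimate the integrand pointwise by $\|\clE x_{\varSigma,\sigma}\|_{\bfPi_{\varSigma}}\,\|(\bfA_{\sigma}-\bfA_{\varSigma})\clE x_{\varSigma,\sigma}\|_{\bfPi_{\varSigma}}\le\beta_{2}^{2}\,\|\bfA_{\sigma}-\bfA_{\varSigma}\|\,\|\clE x_{\varSigma,\sigma}\|^{2}$ using~\eqref{eq:Pinorm}, and integrate. The lower bound does \emph{not} follow from the identity (the integrand need not have a sign), but from optimality: the pair $(x_{\varSigma,\sigma},u_{\varSigma,\sigma})$ is admissible for the single-parameter problem~\eqref{eq:singlecost}--\eqref{eq:syssingle} (its state lies in $W$ by hypothesis, its control in $L^{2}$, and it solves~\eqref{eq:syssingle} as noted above), so $\clJ(\clE x_{\varSigma,\sigma},u_{\varSigma,\sigma})\ge\clJ(\clE x_{\sigma},u_{\sigma})$, and Corollary~\ref{coro:big-small1} gives $\clJ(\clE x_{\sigma},u_{\sigma})\ge\clJ(\bfx,u_{\varSigma})$; hence $\clJ(\clE x_{\varSigma,\sigma},u_{\varSigma,\sigma})-\clJ(\bfx,u_{\varSigma})\ge0$. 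The only real obstacles are bookkeeping: carrying out the Riccati substitution cleanly, and justifying that the boundary term at $t=+\infty$ vanishes --- which is precisely where the hypothesis $x_{\varSigma,\sigma}\in W((0,+\infty),\bbR^{n})$ is used, both for $x_{\varSigma,\sigma}(t)\to0$ and for the admissibility needed in the lower bound.
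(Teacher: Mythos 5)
Your argument is correct and is essentially the paper's own proof: both reduce the claim to the identity $\clJ(\clE x_{\varSigma,\sigma},u_{\varSigma,\sigma})-\clJ(\bfx,u_{\varSigma})=\tfrac12\langle\bfD\,\clE x_{\varSigma,\sigma},\clE x_{\varSigma,\sigma}\rangle_{\clH}$ with $\bfD=\bfPi_{\varSigma}(\bfA_{\sigma}-\bfA_{\varSigma})+(\bfA_{\sigma}-\bfA_{\varSigma})^{\top}\bfPi_{\varSigma}$, obtained by differentiating $t\mapsto\tfrac12\langle\clE x_{\varSigma,\sigma},\bfPi_{\varSigma}\clE x_{\varSigma,\sigma}\rangle$ along the closed loop, using $\clE\clA_{\sigma}=\bfA_{\sigma}\clE$, $\clE B=\bfB$, and the Riccati equation~\eqref{eq:bigRiccati}, and then applying the same $\beta_2^2$ estimate from~\eqref{eq:Pinorm}. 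Your explicit justification of the lower bound (admissibility of $(x_{\varSigma,\sigma},u_{\varSigma,\sigma})$ for the single-parameter problem combined with Corollary~\ref{coro:big-small1}) is correctly reasoned and in fact fills in a step the paper's proof leaves implicit, since $(\clE x_{\varSigma,\sigma},u_{\varSigma,\sigma})$ is not admissible for the extended problem and the nonnegativity does not follow from the identity alone.
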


\begin{proof}
Let~$\clH\coloneqq L^2( (0,+\infty);\bbR^{nN})$, and let~$x_{\circ}\coloneqq x_{\varSigma,\sigma}(0)$ and~$\bfx_\circ\coloneqq\clE x_{\circ}$. We know that the minimal value of~$\clJ(\bfx,u_{\varSigma})$ subject to the extended system \eqref{eq:extsys} is equal to $\frac12 \langle \bfx_{\circ}, \bfPi_{\varSigma} \bfx_{\circ}\rangle$. Thus, using~$x_{\varSigma,\sigma} \in W( (0,+\infty),\bbR^{n})$ we find that~$\lim_{t\to+\infty}x_{\varSigma,\sigma}(t)=0$ (cf.~\cite[Proof of~(2.8), Thm.~2.4]{casas2022infinite_2}) and
\begin{align}
\clJ(\bfx,u_{\varSigma}) &= \frac12 \langle \clE x_{\circ}, \bfPi_{\varSigma} \clE x_{\circ}\rangle= \frac12 \int_0^{\infty} -\frac{\rmd}{\rmd t}\langle \clE x_{\varSigma,\sigma}(t), \bfPi_{\varSigma} \clE x_{\varSigma,\sigma}(t)\rangle {\rm dt}\notag\\
&= -\frac12 \langle\clE \dot{x}_{\varSigma,\sigma}, \bfPi_{\varSigma} \clE x_{\varSigma,\sigma}\rangle_\clH -\frac12 \langle \clE x_{\varSigma,\sigma}, \bfPi_{\varSigma} \clE \dot{x}_{\varSigma,\sigma} \rangle_\clH.\notag
\end{align}
Using~\eqref{eq:avclsys} we find that
\begin{align}
&\clJ(\bfx,u_{\varSigma})\notag \\&= -\frac12 \langle(\clE^\top\bfPi_{\varSigma}\clE (\clA_\sigma-\frac{1}{\alpha}B \bfB^\top \bfPi_{\varSigma} \clE)+(\clA_\sigma-\frac{1}{\alpha}B \bfB^\top \bfPi_{\varSigma} \clE)^\top\clE^\top\bfPi_{\varSigma}\clE) x_{\varSigma,\sigma},  x_{\varSigma,\sigma}\rangle_\clH\notag\\
&= -\frac12 \langle\bfZ  x_{\varSigma,\sigma},   x_{\varSigma,\sigma}\rangle_\clH\notag
 \end{align}
 with~$\bfZ\coloneqq \clE^\top\bfPi_{\varSigma} \clE\clA_\sigma+ (\clE\clA_\sigma)^\top\bfPi_{\varSigma}\clE-\frac{1}{\alpha}\clE^\top\bfPi_{\varSigma} \clE B \bfB^\top \bfPi_{\varSigma} \clE- \frac{1}{\alpha}\clE^\top\bfPi_{\varSigma}  \bfB(\clE B)^\top\bfPi_{\varSigma}\clE.$
 
 Using the identities~$\clE \clA_\sigma=\bfA_{\sigma} \clE$ and~$\clE B=\bfB$, we find
 \begin{align}
 \bfZ &= \clE^\top(\bfPi_{\varSigma} \bfA_{\sigma}+ \bfA_{\sigma}^{\top} \bfPi_{\varSigma}-\frac{2}{\alpha}\bfPi_{\varSigma} \bfB \bfB^\top \bfPi_{\varSigma})\clE\notag\\
 &= \clE^\top\Bigl(\bfPi_{\varSigma} (\bfA_{\sigma}-\bfA_{\varSigma})+ (\bfA_{\sigma}^{\top}-\bfA_{\varSigma}^\top) \bfPi_{\varSigma}-\frac{1}{\alpha}\bfPi_{\varSigma} \bfB \bfB^\top \bfPi_{\varSigma} -\frac1{N}\Id_{nN} \Bigr)\clE,\notag
 \end{align}
where we used the Riccati equation \eqref{eq:bigRiccati}. This leads us to
 \begin{align}
 \clJ(\bfx,u_{\varSigma})&= \frac{1}{2\alpha} \langle \bfPi_{\varSigma} \bfB \bfB^\top \bfPi_{\varSigma} \clE x_{\varSigma,\sigma}, \clE x_{\varSigma,\sigma} \rangle_\clH + \frac{1}{2 N} \langle \clE x_{\varSigma,\sigma}, \clE x_{\varSigma,\sigma} \rangle_\clH\notag\\
 &\quad-\frac{1}{2}\langle\bfD\clE  x_{\varSigma,\sigma}, \clE x_{\varSigma,\sigma}\rangle_\clH\notag
\end{align} 
with
$\bfD\coloneqq \bfPi_{\varSigma}(\bfA_{\sigma} - \bfA_{\varSigma})+(\bfA_{\sigma} - \bfA_{\varSigma})^\top\bfPi_{\varSigma}.$
Finally, recalling~\eqref{eq:fbcontrol}, we have 
\begin{align}
\clJ(\clE x_{\varSigma,\sigma},u_{\varSigma,\sigma})  -  \clJ(\bfx,u_{\varSigma})
& =\frac{1}{2}\langle\bfD\clE  x_{\varSigma,\sigma}, \clE x_{\varSigma,\sigma}\rangle_\clH\le \beta_2^2\,\|\bfA_{\sigma} - \bfA_{\varSigma}\| \|\clE x_{\varSigma,\sigma}\|_\clH^2,\notag
\end{align}
where we used~\eqref{eq:Pinorm}.
\end{proof}

\begin{theorem}\label{thm:final}
For a given parameter $\sigma\in\bbR$ let $(x_{\sigma},u_{\sigma})$ be the minimizer of \eqref{eq:singlecost} subject to \eqref{eq:syssingle}, and let $(x_{\varSigma,\sigma},u_{\varSigma,\sigma})$ be given by \eqref{eq:fbcontrol} and \eqref{eq:avclsys}. If we have that~$x_{\varSigma,\sigma} \in W( (0,+\infty);\bbR^{n})$, then there holds the estimate
\begin{align}
 |\clJ(\clE x_\sigma, u_\sigma) - \clJ(\clE x_{\varSigma,\sigma}, u_{\varSigma,\sigma})|
 &\le \fkC_{\bfx,x_{\sigma},u_{\varSigma},u_{\sigma},M_s,p_{\sigma},\alpha,\beta_2,\bfB, N} \|\bfA_{\sigma} -\bfA_{\varSigma}\|,\notag
\end{align}
with
\begin{equation}\label{eq:finalfkC}
\fkC_{\bfx,x_{\sigma},u_{\varSigma},u_{\sigma},M_s,p_{\sigma},\alpha,\beta_2,\bfB, N} \coloneqq \max(C_2 \sqrt{C_1},\\\beta_2^2 \|\clE x_{\varSigma,\sigma}\|^2_{L^2( (0,+\infty),\bbR^{nN})}),
\end{equation}  where $C_2\coloneqq \fkC_{\bfx,x_{\sigma},u_{\varSigma},u_{\sigma}, N}$ and $C_1\coloneqq \fkC_{M_s,x_{\sigma},p_{\sigma},\alpha,\beta_2,\bfB, N}$ are the constants defined in \eqref{eq:big-small1fkC} and \eqref{eq:big-smallfkC1}, respectively.
\end{theorem}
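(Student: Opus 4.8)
The plan is to realize the quantity $\clJ(\clE x_\sigma,u_\sigma)-\clJ(\clE x_{\varSigma,\sigma},u_{\varSigma,\sigma})$ as a difference of two quantities, each of which has already been compared to the common reference value $\clJ(\bfx,u_{\varSigma})$, where $(\bfx,u_{\varSigma})$ is the minimizer of~\eqref{eq:extobj} subject to~\eqref{eq:extsys} with initial condition $\bfx(0)=\clE x_\circ$ and $x_\circ\coloneqq x_\sigma(0)=x_{\varSigma,\sigma}(0)$. Concretely, I would first invoke Corollary~\ref{coro:big-small1} to get
\[
0\le \clJ(\clE x_{\sigma},u_{\sigma})-\clJ(\bfx,u_{\varSigma})\le \|\bfA_{\varSigma}-\bfA_{\sigma}\|\,C_2\sqrt{C_1},
\]
and then invoke Lemma~\ref{lem:big-rob} (whose hypothesis $x_{\varSigma,\sigma}\in W((0,+\infty);\bbR^n)$ is exactly the standing assumption of the theorem) to get
\[
0\le \clJ(\clE x_{\varSigma,\sigma},u_{\varSigma,\sigma})-\clJ(\bfx,u_{\varSigma})\le \beta_2^2\,\|\bfA_{\sigma}-\bfA_{\varSigma}\|\,\|\clE x_{\varSigma,\sigma}\|^2_{L^2((0,+\infty);\bbR^{nN})}.
\]
Note that both corollary and lemma were stated with the matching initial conditions $\bfx(0)=\clE x_\sigma(0)$ and $\bfx(0)=\clE x_{\varSigma,\sigma}(0)$ respectively; since these coincide with $\clE x_\circ$, the same extended minimizer $(\bfx,u_{\varSigma})$ serves in both bounds.

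The second step is a purely elementary observation: if three real numbers satisfy $a\ge c$, $b\ge c$ with $0\le a-c\le P$ and $0\le b-c\le Q$, then $a-b=(a-c)-(b-c)\in[-Q,P]$, so $|a-b|\le\max\{P,Q\}$. Applying this with $a=\clJ(\clE x_{\sigma},u_{\sigma})$, $b=\clJ(\clE x_{\varSigma,\sigma},u_{\varSigma,\sigma})$, $c=\clJ(\bfx,u_{\varSigma})$, $P=\|\bfA_{\varSigma}-\bfA_{\sigma}\|\,C_2\sqrt{C_1}$, and $Q=\beta_2^2\,\|\bfA_{\sigma}-\bfA_{\varSigma}\|\,\|\clE x_{\varSigma,\sigma}\|^2_{L^2}$ yields
\[
|\clJ(\clE x_\sigma, u_\sigma) - \clJ(\clE x_{\varSigma,\sigma}, u_{\varSigma,\sigma})|\le \|\bfA_{\sigma}-\bfA_{\varSigma}\|\,\max\bigl(C_2\sqrt{C_1},\ \beta_2^2\|\clE x_{\varSigma,\sigma}\|^2_{L^2((0,+\infty);\bbR^{nN})}\bigr),
\]
which is precisely the asserted estimate with $\fkC$ as in~\eqref{eq:finalfkC}. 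This completes the argument.

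I do not expect any genuine obstacle here: the theorem is essentially a bookkeeping corollary of the two preceding one-sided comparisons, and all the real work (the duality estimate for $\delta\bfp$ in Lemma~\ref{lem:big-small} / Corollary~\ref{coro:big-small1}, and the Riccati-identity manipulation in Lemma~\ref{lem:big-rob}) has already been done. The only point requiring a word of care is checking that the $W((0,+\infty);\bbR^n)$-regularity hypothesis — which underlies the applicability of Lemma~\ref{lem:big-rob} and also guarantees $\lim_{t\to\infty}x_{\varSigma,\sigma}(t)=0$ so that the boundary terms vanish — is carried over verbatim as the hypothesis of the present theorem, so nothing further needs to be verified. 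One may optionally remark that the bound is consistent with the triangle-inequality route $|a-b|\le(a-c)+(b-c)\le P+Q$, but the sharper $\max\{P,Q\}$ is what the sign information $a,b\ge c$ buys us.
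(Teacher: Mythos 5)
Your proposal is correct, and it is in fact \emph{sharper} than the argument the paper itself gives. Both proofs use the same decomposition through the common reference value $\clJ(\bfx,u_{\varSigma})$ and invoke the same two ingredients (Corollary~\ref{coro:big-small1} and Lemma~\ref{lem:big-rob}); the difference lies in the final elementary step. The paper applies the plain triangle inequality, $|\delta\clJ|\le|\clJ(\clE x_\sigma,u_\sigma)-\clJ(\bfx,u_{\varSigma})|+|\clJ(\bfx,u_{\varSigma})-\clJ(\clE x_{\varSigma,\sigma},u_{\varSigma,\sigma})|\le P+Q$, and then passes to $\max(P,Q)\,\|\delta\bfA\|$ --- but a sum of two nonnegative terms is bounded by their maximum only up to a factor of $2$, so as written the paper's last inequality either needs that factor or the constant in \eqref{eq:finalfkC} should be a sum rather than a maximum. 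Your observation that both one-sided comparisons have the form $a-c\in[0,P]$ and $b-c\in[0,Q]$, whence $a-b\in[-Q,P]$ and $|a-b|\le\max\{P,Q\}$, is exactly what the sign information (optimality of $(\bfx,u_{\varSigma})$) buys, and it is the argument that actually delivers the stated constant with the $\max$. Your remark on the matching initial conditions ($x_\sigma(0)=x_{\varSigma,\sigma}(0)=x_\circ$, so the same extended minimizer serves in both bounds) is also the right point to flag; the paper leaves it implicit. In short: same skeleton, but your closing step repairs a small slip in the published proof.
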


\begin{proof}
With~$\clH\coloneqq L^2( (0,+\infty),\bbR^{nN})$ and~$\delta\clJ\coloneqq\clJ(\clE x_\sigma, u_\sigma) - \clJ(\clE x_{\varSigma,\sigma}, u_{\varSigma,\sigma})$, we have
\begin{align}
 |\delta\clJ|&\le |\clJ(\clE x_\sigma, u_\sigma) - \clJ(\bfx,u_{\varSigma})| + |\clJ(\bfx,u_{\varSigma}) - \clJ(\clE x_{\varSigma,\sigma}, u_{\varSigma,\sigma})|\label{deltaJ-thmmain}\\
  &\le\|\bfA_{\sigma} -\bfA_{\varSigma}\| \fkC_{\bfx,x_{\sigma},u_{\varSigma},u_{\sigma}, N} \sqrt{\fkC_{M_s,x_{\sigma},p_{\sigma},\alpha,\beta_2,\bfB, N}} 
  +  \beta_2^2 \|\bfA_{\sigma}-\bfA_{\varSigma}\| \|\clE x_{\varSigma,\sigma}\|^2_{\clH}\notag\\
 &\le \fkC_{\bfx,x_{\sigma},u_{\varSigma},u_{\sigma},M_s,p_{\sigma},\alpha,\beta_2,\bfB, N} \|\bfA_{\sigma} -\bfA_{\varSigma}\|,\notag
\end{align}
where we used Corollary~\ref{coro:big-small1}  and Lemma~\ref{lem:big-rob}. 
\end{proof}

\begin{corollary}\label{cor:final}
Given a parameter $\sigma\in\bbR$, let $(x_{\sigma},u_{\sigma})$ be the minimizer of \eqref{eq:singlecost} subject to \eqref{eq:syssingle}, and let $(x_{\varSigma,\sigma},u_{\varSigma,\sigma})$ be given by \eqref{eq:fbcontrol} and \eqref{eq:avclsys}. If~$x_{\varSigma,\sigma} \in W( (0,+\infty);\bbR^{n})$, then it follows that: for each~$\vartheta>\|\bfA_{\varSigma} - \bfA_{\sigma}\|$ there holds the estimate
\begin{align}
 &|\clJ(\clE x_\sigma, u_\sigma) - \clJ(\clE x_{\varSigma,\sigma}, u_{\varSigma,\sigma})|\notag\\
 &\le \|\delta\bfA \| \biggl(  \frac{2}{\vartheta-\|\delta\bfA \|}\Bigl(\vartheta^2\fkC_{M_s,x_{\sigma},p_{\sigma},\alpha,\beta_2,\bfB, N} + \clJ(\bfx,u_\varSigma) \Bigr)+ \beta_2^2 \|\clE x_{\varSigma,\sigma}\|^2_{L^2( (0,+\infty),\bbR^{nN})}\biggr),\notag
\end{align}
where~$\|\delta\bfA \|\coloneqq\|\bfA_{\varSigma} - \bfA_{\sigma}\|$ and~$\fkC_{M_s,x_{\sigma},p_{\sigma},\alpha,\beta_2,\bfB, N}$ is defined in~\eqref{eq:big-smallfkC1}.
\end{corollary}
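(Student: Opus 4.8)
The plan is to repeat the argument of Theorem~\ref{thm:final} essentially verbatim, but to substitute the sharper, $\vartheta$-parametrized estimate of Theorem~\ref{thm:big-small2} for the $\sqrt{\,\cdot\,}$-type bound of Corollary~\ref{coro:big-small1} in the ``optimal-to-optimal'' step. First I would insert the minimizer $(\bfx,u_\varSigma)$ of the extended problem as an intermediary and apply the triangle inequality:
\[
|\clJ(\clE x_\sigma, u_\sigma) - \clJ(\clE x_{\varSigma,\sigma}, u_{\varSigma,\sigma})|
\le |\clJ(\clE x_\sigma, u_\sigma) - \clJ(\bfx,u_{\varSigma})| + |\clJ(\bfx,u_{\varSigma}) - \clJ(\clE x_{\varSigma,\sigma}, u_{\varSigma,\sigma})|,
\]
which is the same splitting as in~\eqref{deltaJ-thmmain}.

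For the first summand I would invoke Theorem~\ref{thm:big-small2}: for the fixed $\vartheta>\|\delta\bfA\|$ it gives
\[
0 \le \clJ(\clE x_\sigma,u_\sigma) - \clJ(\bfx,u_\varSigma)
\le \|\delta\bfA\|\,\frac{2}{\vartheta-\|\delta\bfA\|}\Bigl(\vartheta^2\fkC_{M_s,x_{\sigma},p_{\sigma},\alpha,\beta_2,\bfB, N} + \clJ(\bfx,u_{\varSigma})\Bigr),
\]
so the absolute value bars are superfluous and this term is already in the desired form. For the second summand the standing hypothesis $x_{\varSigma,\sigma}\in W((0,+\infty);\bbR^n)$ is exactly what Lemma~\ref{lem:big-rob} requires, and that lemma yields
\[
0 \le \clJ(\clE x_{\varSigma,\sigma},u_{\varSigma,\sigma}) - \clJ(\bfx,u_\varSigma)
\le \beta_2^2\,\|\bfA_{\sigma}-\bfA_{\varSigma}\|\,\|\clE x_{\varSigma,\sigma}\|^2_{L^2((0,+\infty),\bbR^{nN})},
\]
again with the absolute value redundant because the difference has a definite sign.

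Adding these two bounds and factoring out the common factor $\|\delta\bfA\| = \|\bfA_\varSigma - \bfA_\sigma\| = \|\bfA_\sigma - \bfA_\varSigma\|$ produces precisely the claimed inequality. No new technical work is needed: the corollary is merely the $\vartheta$-parametrized repackaging of Theorem~\ref{thm:final}, trading the $\sqrt{\,\cdot\,}$-type bound of Corollary~\ref{coro:big-small1} for the affine-in-$\clJ(\bfx,u_\varSigma)$ bound of Theorem~\ref{thm:big-small2}. The only minor points to keep track of are that both intermediate differences are nonnegative (so passing through $\clJ(\bfx,u_\varSigma)$ by the triangle inequality costs nothing) and that the parameter $\vartheta$ here is exactly the one for which Theorem~\ref{thm:big-small2} is valid, namely $\vartheta>\|\delta\bfA\|$; accordingly I do not anticipate any genuine obstacle.
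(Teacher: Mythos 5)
Your proposal is correct and coincides with the paper's own proof: the authors likewise reuse the triangle-inequality splitting \eqref{deltaJ-thmmain} from Theorem~\ref{thm:final} and then bound the two summands via Theorem~\ref{thm:big-small2} and Lemma~\ref{lem:big-rob}, respectively. Your observations that both intermediate differences are nonnegative and that $\vartheta>\|\delta\bfA\|$ is exactly the regime of Theorem~\ref{thm:big-small2} are accurate and complete the argument.
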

\begin{proof}
We follow the steps in the proof of Theorem~\ref{thm:final} and write~\eqref{deltaJ-thmmain}, from which we can obtain
the desired estimate, by using Theorem~\ref{thm:big-small2} and Lemma~\ref{lem:big-rob}. 
\end{proof}

\subsection{Stabilization Property of the Proposed Feedback Control} 
In Lemma~\ref{lem:big-rob}, Theorem~\ref{thm:final} and Corollary~\ref{cor:final} we assumed that $x_{\varSigma,\sigma} \in W( (0,+\infty),\bbR^{n})$. Here, we provide sufficient conditions which ensure that this assumption holds.

\begin{lemma}\label{lem:expstab}
Given a parameter $\sigma\in\bbR$, which is not necessarily an element of $\varSigma$, for the pair $(x_{\varSigma,\sigma},u_{\varSigma,\sigma})$ given by \eqref{eq:fbcontrol} and \eqref{eq:avclsys}, there holds that, for $t\ge 0$,
 \begin{align}
  \frac{\mathrm d}{\mathrm dt} \|\clE x_{\varSigma,\sigma}\|^2_{\bfPi_{\varSigma}} \leq \left( -\frac{1}{ N\beta_2^2} + \frac{2\beta_2}{\beta_1}\|\bfA_{\sigma} - \bfA_{\varSigma}\|\right) \|\clE x_{\varSigma,\sigma}\|_{\bfPi_{\varSigma}}^2- \frac{1}{\alpha}\|\bfB^\top \bfPi_{\varSigma}\clE x_{\varSigma,\sigma}\|^2.\notag
 \end{align}
\end{lemma}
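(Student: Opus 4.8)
The plan is to differentiate the quadratic form $\|\clE x_{\varSigma,\sigma}\|^2_{\bfPi_{\varSigma}} = \langle \clE x_{\varSigma,\sigma}, \bfPi_{\varSigma}\clE x_{\varSigma,\sigma}\rangle$ along the closed-loop dynamics~\eqref{eq:avclsys}, then rewrite the resulting expression using the Riccati equation~\eqref{eq:bigRiccati}, exactly as in the proof of Lemma~\ref{lem:big-rob} but now pointwise in $t$ rather than integrated. First I would compute, using $\clE\clA_\sigma = \bfA_\sigma\clE$ and $\clE B = \bfB$,
\[
 \tfrac{\rmd}{\rmd t}\langle \clE x_{\varSigma,\sigma}, \bfPi_{\varSigma}\clE x_{\varSigma,\sigma}\rangle
 = \langle (\bfPi_{\varSigma}\bfA_\sigma + \bfA_\sigma^\top\bfPi_{\varSigma} - \tfrac2\alpha\bfPi_{\varSigma}\bfB\bfB^\top\bfPi_{\varSigma})\clE x_{\varSigma,\sigma}, \clE x_{\varSigma,\sigma}\rangle.
\]
Then I would insert the Riccati identity in the form $\bfPi_{\varSigma}\bfA_{\varSigma} + \bfA_{\varSigma}^\top\bfPi_{\varSigma} - \tfrac1\alpha\bfPi_{\varSigma}\bfB\bfB^\top\bfPi_{\varSigma} = -\tfrac1N\Id_{nN}$, which converts the bracket into $-\tfrac1N\Id_{nN} - \tfrac1\alpha\bfPi_{\varSigma}\bfB\bfB^\top\bfPi_{\varSigma} + \bfPi_{\varSigma}(\bfA_\sigma - \bfA_{\varSigma}) + (\bfA_\sigma - \bfA_{\varSigma})^\top\bfPi_{\varSigma}$. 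This yields
\[
 \tfrac{\rmd}{\rmd t}\|\clE x_{\varSigma,\sigma}\|^2_{\bfPi_{\varSigma}}
 = -\tfrac1N\|\clE x_{\varSigma,\sigma}\|^2 - \tfrac1\alpha\|\bfB^\top\bfPi_{\varSigma}\clE x_{\varSigma,\sigma}\|^2
 + \langle \bfD\,\clE x_{\varSigma,\sigma}, \clE x_{\varSigma,\sigma}\rangle,
\]
with $\bfD\coloneqq \bfPi_{\varSigma}(\bfA_\sigma - \bfA_{\varSigma}) + (\bfA_\sigma - \bfA_{\varSigma})^\top\bfPi_{\varSigma}$, the same $\bfD$ as in Lemma~\ref{lem:big-rob}.

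The remaining task is to bound the two ``good'' terms from below in the $\bfPi_{\varSigma}$-norm and the perturbation term $\langle\bfD\clE x_{\varSigma,\sigma},\clE x_{\varSigma,\sigma}\rangle$ from above. For the first term, using the left inequality in~\eqref{eq:Pinorm}, $\|\clE x_{\varSigma,\sigma}\|^2 \ge \beta_2^{-2}\|\clE x_{\varSigma,\sigma}\|^2_{\bfPi_{\varSigma}}$, so $-\tfrac1N\|\clE x_{\varSigma,\sigma}\|^2 \le -\tfrac1{N\beta_2^2}\|\clE x_{\varSigma,\sigma}\|^2_{\bfPi_{\varSigma}}$. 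For the perturbation term, $|\langle\bfD\clE x_{\varSigma,\sigma},\clE x_{\varSigma,\sigma}\rangle| \le \|\bfD\|\,\|\clE x_{\varSigma,\sigma}\|^2 \le 2\|\bfPi_{\varSigma}\|\,\|\bfA_\sigma - \bfA_{\varSigma}\|\,\|\clE x_{\varSigma,\sigma}\|^2$; then I would use $\|\bfPi_{\varSigma}\| = \beta_2^2$ (since $\beta_2^2 = \max\,{\rm Eig}(\bfPi_{\varSigma})$ and $\bfPi_{\varSigma}\succ0$ is symmetric) together with $\|\clE x_{\varSigma,\sigma}\|^2 \le \beta_1^{-2}\|\clE x_{\varSigma,\sigma}\|^2_{\bfPi_{\varSigma}}$ from the other side of~\eqref{eq:Pinorm}, giving $\langle\bfD\clE x_{\varSigma,\sigma},\clE x_{\varSigma,\sigma}\rangle \le \tfrac{2\beta_2^2}{\beta_1^2}\|\bfA_\sigma - \bfA_{\varSigma}\|\,\|\clE x_{\varSigma,\sigma}\|^2_{\bfPi_{\varSigma}}$. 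Keeping the term $-\tfrac1\alpha\|\bfB^\top\bfPi_{\varSigma}\clE x_{\varSigma,\sigma}\|^2$ untouched and dropping nothing else, assembling these bounds produces the claimed inequality.

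One small point to reconcile: the stated inequality has coefficient $\tfrac{2\beta_2}{\beta_1}$ whereas the naive assembly above gives $\tfrac{2\beta_2^2}{\beta_1^2}$, so I would recheck the bookkeeping — presumably the paper bounds $\|\bfD\|$ or passes between norms slightly differently (e.g.\ estimating $|\langle\bfD\clE x_{\varSigma,\sigma},\clE x_{\varSigma,\sigma}\rangle|$ by $2\|\bfPi_{\varSigma}^{1/2}\|\,\|\bfPi_{\varSigma}^{1/2}(\bfA_\sigma-\bfA_{\varSigma})\clE x_{\varSigma,\sigma}\|\,\|\clE x_{\varSigma,\sigma}\| \le 2\beta_2\|\bfA_\sigma-\bfA_{\varSigma}\|\,\|\clE x_{\varSigma,\sigma}\|_{\bfPi_{\varSigma}}\,\|\clE x_{\varSigma,\sigma}\|$ and then $\|\clE x_{\varSigma,\sigma}\| \le \beta_1^{-1}\|\clE x_{\varSigma,\sigma}\|_{\bfPi_{\varSigma}}$ only once). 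This is the only genuinely delicate step; everything else is the direct differentiation-plus-Riccati substitution already carried out in Lemma~\ref{lem:big-rob}. I do not anticipate a real obstacle, just care with the constant in the cross term and with the symmetric-square-root manipulation.
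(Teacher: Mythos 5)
Your proposal is correct and follows essentially the same route as the paper: differentiate $\|\clE x_{\varSigma,\sigma}\|^2_{\bfPi_{\varSigma}}$ along \eqref{eq:avclsys}, substitute the Riccati identity \eqref{eq:bigRiccati} to produce the terms $-\frac1N\|\clE x_{\varSigma,\sigma}\|^2$, $-\frac1\alpha\|\bfB^\top\bfPi_{\varSigma}\clE x_{\varSigma,\sigma}\|^2$, and the cross term, then apply \eqref{eq:Pinorm}. The "small point to reconcile" resolves exactly as you anticipated: the paper estimates $2\langle\delta\bfA\,\clE x_{\varSigma,\sigma},\bfPi_{\varSigma}\clE x_{\varSigma,\sigma}\rangle$ by Cauchy--Schwarz in the $\bfPi_{\varSigma}$-inner product, giving $2\beta_2\|\delta\bfA\|\,\|\clE x_{\varSigma,\sigma}\|\,\|\clE x_{\varSigma,\sigma}\|_{\bfPi_{\varSigma}}$ and hence the coefficient $\frac{2\beta_2}{\beta_1}$ after one application of the lower bound in \eqref{eq:Pinorm}.
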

\begin{proof}
For simplicity, let us denote~$z\coloneqq x_{\varSigma,\sigma}$. By direct computations we find
\begin{align}
  \frac{\mathrm d}{\mathrm dt} \|\clE z\|^2_{\bfPi_{\varSigma}} &=\frac{\mathrm d}{\mathrm dt} \langle \clE z, \bfPi_{\varSigma} \clE z\rangle = \langle \clE \dot z, \bfPi_{\varSigma} \clE z\rangle + \langle \clE z, \bfPi_{\varSigma} \clE \dot z \rangle\notag \\
&=\langle \clE \clA_{\sigma} z  - \frac{1}{\alpha} \bfB \bfB^\top \bfPi_{\varSigma} \clE z , \bfPi_{\varSigma} \clE z \rangle  + \langle \clE z , \bfPi_{\varSigma} \clE \clA_{\sigma} z  - \frac{1}{\alpha} \bfPi_{\varSigma} \bfB \bfB^\top \bfPi_{\varSigma} \clE z  \rangle\notag\\
 &=\langle \bfA_{\varSigma} \clE z  - \frac{1}{\alpha} \bfB \bfB^\top \bfPi_{\varSigma} \clE z ,  \bfPi_{\varSigma} \clE z \rangle + \langle (\clE \clA_{\sigma} - \bfA_{\varSigma} \clE) z , \bfPi_{\varSigma} \clE z  \rangle\notag \\
&\quad+ \langle \clE z , \bfPi_{\varSigma}(\clE \clA_{\sigma} - \bfA_{\varSigma}\clE)z  \rangle+ \langle \clE z , \bfPi_{\varSigma} \bfA_{\varSigma} \clE z  -\frac{1}{\alpha} \bfPi_{\varSigma} \bfB \bfB^\top \bfPi_{\varSigma} \clE z  \rangle. \notag
  \end{align}
Thus, with~$\delta\bfA\coloneqq  \bfA_{\sigma}- \bfA_{\varSigma}$, we find~$\delta\bfA\clE=\clE \clA_{\sigma} - \bfA_{\varSigma} \clE$ and
 \begin{align}
  \frac{\mathrm d}{\mathrm dt} \|\clE z\|^2_{\bfPi_{\varSigma}} &=\langle \bfPi_{\varSigma} \bfA_{\varSigma} \clE z  - \frac{1}{\alpha} \bfPi_{\varSigma} \bfB \bfB^\top \bfPi_{\varSigma} \clE z , \clE z  \rangle+ \langle \delta\bfA\clE z , \bfPi_{\varSigma}\clE z  \rangle\\
 &\quad+ \langle \clE z , \bfPi_{\varSigma} \delta\bfA\clE z  \rangle + \langle \bfA_{\varSigma}^\top \bfPi_{\varSigma} \clE z  - \frac{1}{\alpha} \bfPi_{\varSigma} \bfB \bfB^\top \bfPi_{\varSigma} \clE z , \clE z  \rangle\notag \\
 &= - \frac{1}{\alpha} \langle \bfPi_{\varSigma} \bfB \bfB^\top \bfPi_{\varSigma} \clE z , \clE z \rangle - \frac{1}{ N} \langle \clE z , \clE z  \rangle+ \langle\delta\bfA\clE  z , \bfPi_{\varSigma}\clE z  \rangle + \langle \clE z , \bfPi_{\varSigma} \delta\bfA\clE z  \rangle\\
 &= -\frac{1}{\alpha}\|\bfB^\top \bfPi_{\varSigma} \clE z \|^2 - \frac{1}{ N} \|\clE z \|^2 + 2 \langle \delta\bfA\clE  z , \bfPi_{\varSigma}\clE z  \rangle,\notag 
\end{align}
where we used the Riccati equation \eqref{eq:bigRiccati}. Now, from 
 \begin{align}
2 \langle \delta\bfA\clE  z , \bfPi_{\varSigma}\clE z  \rangle=2\langle \delta\bfA\clE  z , \clE z  \rangle_{\bfPi_{\varSigma}} \le2\|\delta\bfA\clE  z\|_{\bfPi_{\varSigma}}\|\clE z\|_{\bfPi_{\varSigma}} \le2\beta_2\|\delta\bfA\|\|\clE  z\|\|\clE  z\|_{\bfPi_{\varSigma}},\notag
 \end{align}
the assertion follows from~\eqref{eq:Pinorm}.
 \end{proof}
 
As a consequence, the following result on the stability of~\eqref{eq:avclsys} follows.

\begin{corollary}\label{coro:stableFeedback}
For a given parameter $\sigma \in \bbR$, let
\begin{align}\label{eq:smallness}
 \|\bfA_{\sigma} - \bfA_{\varSigma}\| < \frac{\beta_1}{2 N\beta_2^3}.
\end{align}
Then, with the feedback~\eqref{eq:linFB}, the system \eqref{eq:avclsys} is stable. More precisely, for~$t\ge0$, 
\begin{subequations}\label{eq:expstab}
\begin{align}\label{eq:expstab1}
 \|\clE x_{\varSigma,\sigma}(t)\|_{\bfPi_{\varSigma}}^2 &\le \rme^{-\lambda t}\|\clE x_{\varSigma,\sigma}(0)\|_{\bfPi_{\varSigma}}^2,\\
 \|x_{\varSigma,\sigma}(t)\|^2  &\le \Big(\frac{\beta_2}{\beta_1}\Big)^2 \rme^{-\lambda t} \|x_{\varSigma,\sigma}\|^2, \label{eq:expstab2}\\
\mbox{with}\quad\lambda &\coloneqq \left(\frac{1}{N\beta_2^2} - \frac{2\beta_2}{\beta_1} \|\bfA_{\sigma} - \bfA_{\varSigma}\|\right) > 0. \label{eq:expstab-lam}
\end{align}
\end{subequations}
\end{corollary}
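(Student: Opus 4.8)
The plan is to run a Grönwall argument on the differential inequality already established in Lemma~\ref{lem:expstab}. First I would record that the smallness hypothesis~\eqref{eq:smallness} is, upon multiplying both sides by $\tfrac{2\beta_2}{\beta_1}>0$, precisely equivalent to the positivity of the number
$\lambda=\tfrac{1}{N\beta_2^2}-\tfrac{2\beta_2}{\beta_1}\|\bfA_{\sigma}-\bfA_{\varSigma}\|$ from~\eqref{eq:expstab-lam}. Since~\eqref{eq:avclsys} is a linear constant-coefficient ODE $\dot x_{\varSigma,\sigma}=(\clA_{\sigma}-\tfrac1\alpha B\bfB^\top\bfPi_{\varSigma}\clE)x_{\varSigma,\sigma}$, its solution exists globally on $[0,+\infty)$ and is smooth, so $t\mapsto\|\clE x_{\varSigma,\sigma}(t)\|^2_{\bfPi_{\varSigma}}$ is continuously differentiable and the estimate of Lemma~\ref{lem:expstab} holds pointwise in $t\ge0$.

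Next I would simply discard the manifestly nonpositive term $-\tfrac1\alpha\|\bfB^\top\bfPi_{\varSigma}\clE x_{\varSigma,\sigma}\|^2\le0$ on the right-hand side of that estimate, which leaves $\tfrac{\mathrm d}{\mathrm dt}\|\clE x_{\varSigma,\sigma}(t)\|^2_{\bfPi_{\varSigma}}\le-\lambda\|\clE x_{\varSigma,\sigma}(t)\|^2_{\bfPi_{\varSigma}}$ for all $t\ge0$. Multiplying by the integrating factor $\rme^{\lambda t}$ shows that $t\mapsto\rme^{\lambda t}\|\clE x_{\varSigma,\sigma}(t)\|^2_{\bfPi_{\varSigma}}$ is nonincreasing; evaluating at $t$ and at $0$ gives exactly~\eqref{eq:expstab1}.

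To pass from the $\bfPi_{\varSigma}$-weighted norm to the Euclidean norm I would invoke the two-sided bound~\eqref{eq:Pinorm}: combining $\beta_1^2\|\clE x_{\varSigma,\sigma}(t)\|^2\le\|\clE x_{\varSigma,\sigma}(t)\|^2_{\bfPi_{\varSigma}}$ and $\|\clE x_{\varSigma,\sigma}(0)\|^2_{\bfPi_{\varSigma}}\le\beta_2^2\|\clE x_{\varSigma,\sigma}(0)\|^2$ with~\eqref{eq:expstab1}, and then using the identity $\|\clE y\|^2=N\|y\|^2$ to cancel the factor $N$ on both sides, yields~\eqref{eq:expstab2} (with $\|x_{\varSigma,\sigma}\|$ on the right standing for $\|x_{\varSigma,\sigma}(0)\|$). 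In particular $x_{\varSigma,\sigma}$ decays exponentially, hence lies in $L^2((0,+\infty);\bbR^n)$; and since $\dot x_{\varSigma,\sigma}=\clA_{\sigma}x_{\varSigma,\sigma}-\tfrac1\alpha B\bfB^\top\bfPi_{\varSigma}\clE x_{\varSigma,\sigma}$ is a fixed linear combination of exponentially decaying functions, $\dot x_{\varSigma,\sigma}\in L^2((0,+\infty);\bbR^n)$ as well, so $x_{\varSigma,\sigma}\in W((0,+\infty);\bbR^n)$ and~\eqref{eq:avclsys} is (exponentially) stable.

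There is no genuine obstacle here: the whole argument is a one-line Grönwall/integrating-factor estimate applied to the inequality of Lemma~\ref{lem:expstab}. The only points demanding a little care are (i) verifying that the quantitative threshold in~\eqref{eq:smallness} is exactly what forces $\lambda>0$, and (ii) bookkeeping of the factor $N$ (and of $\beta_1,\beta_2$) when translating the decay estimate for $\clE x_{\varSigma,\sigma}$ into the corresponding estimate for $x_{\varSigma,\sigma}$ itself.
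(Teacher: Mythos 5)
Your proposal is correct and follows essentially the same route as the paper: combine Lemma~\ref{lem:expstab} with the smallness condition~\eqref{eq:smallness} (which is indeed equivalent to $\lambda>0$), drop the nonpositive term, apply Gr\"onwall to obtain~\eqref{eq:expstab1}, and then use~\eqref{eq:Pinorm} together with $\|\clE y\|^2=N\|y\|^2$ to deduce~\eqref{eq:expstab2}. Your closing observation that the exponential decay yields $x_{\varSigma,\sigma}\in W((0,+\infty);\bbR^n)$ is a useful addition the paper leaves implicit, since that membership is assumed in Lemma~\ref{lem:big-rob} and Theorem~\ref{thm:final}.
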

\begin{proof}
From Lemma~\ref{lem:expstab} and \eqref{eq:smallness} we have that
 \begin{align}\label{eq:expstab2a}
   \frac{\mathrm d}{\mathrm dt} \|\clE x_{\varSigma,\sigma}\|^2_{\bfPi_{\varSigma}} \leq -\lambda \|\clE x_{\varSigma,\sigma}\|_{\bfPi_{\varSigma}}^2 - \frac{1}{\alpha} \|\bfB^\top \bfPi_{\varSigma} \clE x_{\varSigma,\sigma}\|^2,\quad\mbox{for}\quad t>0,
 \end{align}
 with $\lambda$ as in~\eqref{eq:expstab-lam}, which implies~\eqref{eq:expstab1}. Finally, we use~\eqref{eq:Pinorm} to obtain~\eqref{eq:expstab2}. \end{proof}

\subsection{The Riccati Feedback for the Average of Parameters}
Above in Corollary~\ref{coro:stableFeedback} we give the sufficient condition~\eqref{eq:smallness} for the stabilizing properties of the feedback operator~$-\frac{1}{\alpha}\bfB^\top\bfPi_\varSigma\clE$, where~$\bfPi$ solves~\eqref{eq:bigRiccati}. The condition requires, in particular the smallness of~$\|\delta\bfA\|=\|\bfA_{\sigma} - \bfA_{\varSigma}\|$. Here, firstly, we recall that for small~$\|\delta\bfA\|$ we also have the stabilizing property for the average feedback~$-\frac{1}{\alpha}B^\top\Pi_{\bar\sigma}$, with~$\bar\sigma\coloneqq\frac1N\sum_{i=1}^N\sigma_i$, where~$\Pi_{\bar\sigma}$ solves~\eqref{eq:smallRiccati}. This latter feedback is easier to compute since the size of~$-\frac{1}{\alpha}B^\top\Pi_{\bar\sigma}$ is smaller than the size of~$-\frac{1}{\alpha}\bfB^\top\bfPi_\varSigma\clE$, $n<nN$. This raises the question: why using~$-\frac{1}{\alpha}\bfB^\top\bfPi_\varSigma\clE$ instead of~$-\frac{1}{\alpha}B^\top\Pi_{\bar\sigma}$. The answer is given in the numerical simulations hereafter, showing that the former feedback is more robust. The same simulations also indicate that, the condition~\eqref{eq:smallness} in Corollary~\ref{coro:stableFeedback} may be not necessary, see~Remark \ref{rem:nnecessary} below.

\begin{theorem}\label{thm:stableFeed_avSigma}
For a given parameter $\sigma \in \bbR$, let
\begin{align}\label{eq:smallness_new}
 \|\clA_{\sigma} - \clA_{\bar\sigma}\| < (2\|\Pi_{\bar\sigma}\|)^{-1},
\end{align}
where~$\bar\sigma\coloneqq\frac1N\sum_{i=1}^N\sigma_i$ and~$\Pi_{\bar\sigma}\succ0$ solves~\eqref{eq:smallRiccati} (with~$\clA_{\bar\sigma}$ in place of~$\clA_{\sigma}$). Then,
\begin{equation}\label{sys:avsigsig}
\dot x=\Bigl(\clA_\sigma-\frac{1}{\alpha}BB^\top\Pi_{\bar\sigma}\Bigr)x,\qquad x(0)=x_\circ,
\end{equation}
is a stable system. More precisely, there exists a constant~$C\ge1$ such that 
\begin{align}
 \|x(t)\|^2 &\le C\rme^{-\lambda t}\|x(0)\|^2,\quad\mbox{for all~$t\ge0$},\label{eq:expstab-lam-avsig}
\end{align}
with~$\lambda=1-2\|\Pi_{\bar\sigma}\|\|\clA_\sigma-\clA_{\bar\sigma}\|$.
\end{theorem}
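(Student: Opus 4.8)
The plan is to reproduce the Lyapunov argument of Lemma~\ref{lem:expstab} and Corollary~\ref{coro:stableFeedback}, but now directly for the single $n\times n$ system \eqref{sys:avsigsig}, using the solution $\Pi_{\bar\sigma}\succ0$ of the single-parameter Riccati equation \eqref{eq:smallRiccati} (with $\clA_{\bar\sigma}$ in place of $\clA_\sigma$) as the energy matrix. Since \eqref{sys:avsigsig} is linear and autonomous, $x(t)=\rme^{(\clA_\sigma-\frac1\alpha BB^\top\Pi_{\bar\sigma})t}x_\circ$ is globally defined and smooth, so $V(t)\coloneqq\langle x(t),\Pi_{\bar\sigma}x(t)\rangle$ is differentiable and, along trajectories, $\dot V=\langle\dot x,\Pi_{\bar\sigma}x\rangle+\langle x,\Pi_{\bar\sigma}\dot x\rangle$.

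First I would split the drift as $\clA_\sigma-\frac1\alpha BB^\top\Pi_{\bar\sigma}=A_{\mathrm{cl}}+\delta\clA$, with $A_{\mathrm{cl}}\coloneqq\clA_{\bar\sigma}-\frac1\alpha BB^\top\Pi_{\bar\sigma}$ and $\delta\clA\coloneqq\clA_\sigma-\clA_{\bar\sigma}$. Substituting into $\dot V$ and using that $\Pi_{\bar\sigma}$ is symmetric gives $\dot V=\langle(A_{\mathrm{cl}}^\top\Pi_{\bar\sigma}+\Pi_{\bar\sigma}A_{\mathrm{cl}})x,x\rangle+2\langle\delta\clA\,x,\Pi_{\bar\sigma}x\rangle$. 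The key identity is $A_{\mathrm{cl}}^\top\Pi_{\bar\sigma}+\Pi_{\bar\sigma}A_{\mathrm{cl}}=-\Id_n-\frac1\alpha\Pi_{\bar\sigma}BB^\top\Pi_{\bar\sigma}$, obtained from \eqref{eq:smallRiccati} exactly as in the proof of Lemma~\ref{lem:expstab} (here the source term is $\Id_n$, not $\frac1N\Id_{nN}$). Hence $\dot V=-\|x\|^2-\frac1\alpha\|B^\top\Pi_{\bar\sigma}x\|^2+2\langle\delta\clA\,x,\Pi_{\bar\sigma}x\rangle$. Estimating the cross term crudely by $2\langle\delta\clA\,x,\Pi_{\bar\sigma}x\rangle\le2\|\delta\clA\|\,\|\Pi_{\bar\sigma}\|\,\|x\|^2$ and discarding the nonnegative term $\frac1\alpha\|B^\top\Pi_{\bar\sigma}x\|^2$, I obtain $\dot V\le-\bigl(1-2\|\Pi_{\bar\sigma}\|\,\|\clA_\sigma-\clA_{\bar\sigma}\|\bigr)\|x\|^2$, and the bracketed coercivity constant is strictly positive precisely by the smallness hypothesis \eqref{eq:smallness_new}.

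It remains to pass from this differential inequality to the exponential estimate. Since $\Pi_{\bar\sigma}\succ0$ is symmetric, $(\min{\rm Eig}(\Pi_{\bar\sigma}))\|x\|^2\le V\le\|\Pi_{\bar\sigma}\|\,\|x\|^2$; using $\|x\|^2\ge V/\|\Pi_{\bar\sigma}\|$ turns the bound into $\dot V\le-\lambda V$ with $\lambda\coloneqq\bigl(1-2\|\Pi_{\bar\sigma}\|\,\|\clA_\sigma-\clA_{\bar\sigma}\|\bigr)/\|\Pi_{\bar\sigma}\|>0$. Gr\"onwall's lemma then gives $V(t)\le\rme^{-\lambda t}V(0)$, and combining this with the two-sided spectral bound yields $\|x(t)\|^2\le C\rme^{-\lambda t}\|x(0)\|^2$ with $C\coloneqq\|\Pi_{\bar\sigma}\|/\min{\rm Eig}(\Pi_{\bar\sigma})\ge1$, which is the asserted estimate \eqref{eq:expstab-lam-avsig} (with the decay rate as just described). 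The proof presents no genuine obstacle, being a transcription of Lemma~\ref{lem:expstab}/Corollary~\ref{coro:stableFeedback} to the single-parameter setting; the points requiring care are the correct use of the $+\Id_n$ term in \eqref{eq:smallRiccati} and the bookkeeping of the factor $\|\Pi_{\bar\sigma}\|^{-1}$ that the coercivity of $\langle\cdot,\Pi_{\bar\sigma}\cdot\rangle$ relative to $\|\cdot\|^2$ introduces into the rate.
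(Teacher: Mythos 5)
Your proposal is correct and follows essentially the same route as the paper: a Lyapunov argument with $V=\langle x,\Pi_{\bar\sigma}x\rangle$, using the Riccati equation \eqref{eq:smallRiccati} to reduce $\dot V$ to $-\|x\|^2-\frac1\alpha\|B^\top\Pi_{\bar\sigma}x\|^2$ plus the perturbation term, which is then bounded by $2\|\Pi_{\bar\sigma}\|\,\|\clA_\sigma-\clA_{\bar\sigma}\|\,\|x\|^2$. You are in fact more explicit than the paper about the final Gr\"onwall step, correctly observing that passing from $\dot V\le-(1-2\|\Pi_{\bar\sigma}\|\,\|\clA_\sigma-\clA_{\bar\sigma}\|)\|x\|^2$ to exponential decay of $V$ introduces a factor $\|\Pi_{\bar\sigma}\|^{-1}$ into the rate, so the true exponent is $\lambda/\|\Pi_{\bar\sigma}\|$ rather than the $\lambda$ stated in \eqref{eq:expstab-lam-avsig} --- a harmless bookkeeping point that the paper's one-line conclusion glosses over.
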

\begin{proof}
Direct computations, using~\eqref{eq:smallRiccati},  lead us to
\begin{align}
\frac{\rmd}{\rmd t}\langle \Pi_{\bar\sigma} x,x\rangle&=-\frac{1}{\alpha}\|B^\top\Pi_{\bar\sigma}x\|^2-\|x\|^2 +\langle \Pi_{\bar\sigma}(\clA_\sigma-\clA_{\bar\sigma}) x,x\rangle+\langle \Pi_{\bar\sigma} x,(\clA_\sigma-\clA_{\bar\sigma})x\rangle\notag\\
&\le -\frac{1}{\alpha}\|B^\top\Pi_{\bar\sigma}x\|^2-(1-\|\Pi_{\bar\sigma}(\clA_\sigma-\clA_{\bar\sigma})+(\clA_\sigma-\clA_{\bar\sigma})^\top\Pi_{\bar\sigma}\|)\|x\|^2 \notag\\
&\le -\frac{1}{\alpha}\|B^\top\Pi_{\bar\sigma}x\|^2-(1-2\|\Pi_{\bar\sigma}\|\|\clA_\sigma-\clA_{\bar\sigma}\|)\|x\|^2 \le-\lambda\|x\|^2,\notag
\end{align}
which implies~\eqref{eq:expstab-lam-avsig}.
\end{proof}

\section{Examples: Ensemble Stabilizable Systems}\label{sec:enstabsys}
We collect examples of ensemble stabilizable systems, showing the stabilizability performance of the feedback~\eqref{eq:linFB}, thus illustrating the result in  Corollary~\ref{coro:stableFeedback}.

\subsection{The Oscillator}
Let us consider the differential equation
 \begin{align}\label{sys-osc0u}
\ddot \theta &=-\theta-{\sigma}\dot \theta + u,\qquad  \theta(0)= \theta_\circ,\qquad \dot\theta(0)= \theta_{\circ1}.
\end{align}
We assume that the restoring force of the spring is proportional to the position~$\theta(t)$, at time~$t>0$ with factor~$-1$, and that the friction, or damping, is proportional to the velocity~$\dot \theta(t)$ at time~$t>0$, where we are uncertain about the precise damping factor~$-\sigma$. Thus, we consider a finite ensemble~$\varSigma\coloneqq(\sigma_i)_{i=1}^N$ of possible values of~$\sigma$, and rewrite the second order equation in~\eqref{sys-osc0u}, for each~$\sigma_i$, as
\begin{align}\label{sys-oscu}
\dot x_{\sigma_i}&= \clA_{\sigma_i}x_{\sigma_i}+ Bu,\qquad x_{\sigma_i}(0)=x_\circ,\qquad1\le i \le N,
\end{align}
with operators~$\clA_{\sigma_i} = \begin{bmatrix}0&1\\-1&-\sigma_i\end{bmatrix}$, $B = \begin{bmatrix}0 \\1\end{bmatrix}$, initial condition~$x_\circ\coloneqq\begin{bmatrix}\theta_\circ\\\theta_{\circ1}\end{bmatrix} \in \bbR^2$, and corresponding states~$x_{\sigma_i}\coloneqq\begin{bmatrix}\theta &\dot\theta\end{bmatrix}^\top=\begin{bmatrix}\theta_{\sigma_i}& \dot\theta_{\sigma_i}\end{bmatrix}^\top$. 

Looking at~$\varSigma$ as a set of training parameters, we assume that it is chosen with pairwise distinct elements.

\begin{lemma}\label{lem:enscontr}
 Let the elements in $\varSigma$ be pairwise distinct (i.e., $\sigma_i \neq \sigma_j$ if $i \neq j$). Then, the ensemble of systems $(\clA_{\sigma_i},B)_{i=1}^N$ given in \eqref{sys-oscu} is ensemble controllable.
\end{lemma}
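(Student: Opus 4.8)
The goal is to show that the extended system $(\bfA_{\varSigma},\bfB)$ is controllable when the damping parameters $\sigma_i$ are pairwise distinct. The cleanest route is the Hautus test~\eqref{eq:HautusContr}: I must show that for every $\lambda\in{\rm Eig}(\bfA_{\varSigma})$ the matrix $\begin{bmatrix}\bfA_{\varSigma}-\lambda\Id_{2N}&\bfB\end{bmatrix}$ has full row rank $2N$. Since $\bfA_{\varSigma}$ is block-diagonal with blocks $\clA_{\sigma_i}$, its eigenvalues are exactly the eigenvalues of the individual $2\times2$ blocks; for the oscillator, the characteristic polynomial of $\clA_{\sigma_i}$ is $\mu^2+\sigma_i\mu+1$, so ${\rm Eig}(\clA_{\sigma_i})=\{\mu : \mu^2+\sigma_i\mu+1=0\}$. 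The plan is to suppose, for contradiction, that there is a nonzero row vector $v^\top=\begin{bmatrix}v_1^\top&\cdots&v_N^\top\end{bmatrix}$, $v_j\in\bbC^{2}$, with $v^\top(\bfA_{\varSigma}-\lambda\Id_{2N})=0$ and $v^\top\bfB=0$, i.e., $v_j^\top(\clA_{\sigma_j}-\lambda\Id_2)=0$ for every $j$ and $\sum_{j=1}^N v_j^\top B=0$.

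\textbf{Key steps.} First, for each $j$, either $v_j=0$ or $v_j$ is a left eigenvector of $\clA_{\sigma_j}$ for the eigenvalue $\lambda$, which forces $\lambda\in{\rm Eig}(\clA_{\sigma_j})$. Second, I claim that a given $\lambda$ can be an eigenvalue of $\clA_{\sigma_j}$ for \emph{at most one} index $j$: if $\lambda^2+\sigma_i\lambda+1=0$ and $\lambda^2+\sigma_j\lambda+1=0$ then $(\sigma_i-\sigma_j)\lambda=0$; since $\sigma_i\neq\sigma_j$ this gives $\lambda=0$, but $\lambda=0$ is impossible because $0^2+\sigma_j\cdot0+1=1\neq0$. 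Hence there is a unique index $i$ with $\lambda\in{\rm Eig}(\clA_{\sigma_i})$, and consequently $v_j=0$ for all $j\neq i$, so $v^\top=\begin{bmatrix}0&\cdots&0&v_i^\top&0&\cdots&0\end{bmatrix}$ with $v_i\neq0$. Third, the conditions then reduce to $v_i^\top(\clA_{\sigma_i}-\lambda\Id_2)=0$ and $v_i^\top B=0$, i.e., the pair $(\clA_{\sigma_i},B)$ fails the Hautus test. So it suffices to check directly that each single system $(\clA_{\sigma_i},B)$ is controllable: compute the Kalman matrix $\begin{bmatrix}B&\clA_{\sigma_i}B\end{bmatrix}=\begin{bmatrix}0&1\\1&-\sigma_i\end{bmatrix}$, whose determinant is $-1\neq0$, so it has rank $2$. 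This contradicts the existence of such $v_i$, and the Hautus test for $(\bfA_{\varSigma},\bfB)$ is satisfied; hence the ensemble is controllable.

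\textbf{Alternative phrasing.} Equivalently, one may invoke the reciprocal direction of Lemma~\ref{lem:stab-enstab} (or its controllability analogue cited from \cite{danhane2022conditions,helmke2014uniform}): each $(\clA_{\sigma_i},B)$ is controllable, and ${\rm Eig}(\clA_{\sigma_i})\cap{\rm Eig}(\clA_{\sigma_j})=\emptyset$ for $i\neq j$ by the same $(\sigma_i-\sigma_j)\lambda=0$ argument together with $0\notin{\rm Eig}(\clA_{\sigma_i})$, which immediately yields ensemble controllability. I would likely present the self-contained Hautus argument since it keeps the proof short and does not require a controllability version of the lemma that was only stated for stabilizability.

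\textbf{Main obstacle.} There is essentially no hard analytic obstacle here; the only point requiring a moment's care is the disjointness of the spectra, which hinges on the specific quadratic $\mu^2+\sigma_i\mu+1$ having constant term $1\neq0$ (this is what rules out the degenerate case $\lambda=0$ and makes "distinct $\sigma_i$" enough). If instead the free frequency — not just the damping — varied, one would need distinctness of the pairs of roots rather than of a single parameter, so it is worth stating explicitly in the proof where the structure $\clA_{\sigma_i}=\begin{bmatrix}0&1\\-1&-\sigma_i\end{bmatrix}$ is used.
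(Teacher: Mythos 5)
Your proof is correct, but it takes a genuinely different route from the paper's. The paper also starts from the Hautus test~\eqref{eq:HautusContr}, but it writes out the left-kernel conditions componentwise and, in the case $\lambda\neq0$, iterates them to generate the moment conditions $\sum_{n=1}^N\sigma_n^k v_{2n}=0$ and $\sum_{n=1}^N\sigma_n^k v_{2n-1}=0$ for $k=0,\dots,N-1$, concluding via the nonsingularity of the Vandermonde matrix $\clV_\varSigma$ built from the pairwise distinct $\sigma_i$. You instead observe that the block spectra are pairwise disjoint --- subtracting the characteristic polynomials $\mu^2+\sigma_i\mu+1$ and $\mu^2+\sigma_j\mu+1$ gives $(\sigma_i-\sigma_j)\lambda=0$, and $\lambda=0$ is excluded by the constant term $1$ --- so a left null vector of $\begin{bmatrix}\bfA_{\varSigma}-\lambda\Id_{2N}&\bfB\end{bmatrix}$ can have at most one nonzero block, and the problem collapses to single-system controllability, checked by the Kalman matrix $\begin{bmatrix}B&\clA_{\sigma_i}B\end{bmatrix}$ of determinant $-1$ (a computation the paper itself records right after the lemma). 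Your argument is shorter and more transparent for this example, and it correctly isolates where the structure of $\clA_{\sigma_i}$ is used; what the paper's Vandermonde machinery buys is reusability, since essentially the same iteration is deployed for the compartment models in Section~\ref{sS:MComp-cat-open} and~\ref{sS:MComp-cyc-open}, where $\sigma_i$ enters only one diagonal entry and the block spectra need not be disjoint, so your shortcut would not apply there. One small caveat: your ``alternative phrasing'' leans on a controllability analogue of Lemma~\ref{lem:stab-enstab} that the paper states only for stabilizability (citing the controllability versions elsewhere), so your decision to present the self-contained Hautus argument is the right one.
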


\begin{proof}
We use the Hautus test for controllability~\eqref{eq:HautusContr}. Firstly, we note that, for any given row vector~$v=[ v_1\;v_2\;\dots \;v_{2N}]\in\bbC^{1\times 2N}$, we have
\begin{align}
0=v\begin{bmatrix}\bfA_{\varSigma}-\lambda\Id_{nN} &\bfB\end{bmatrix}\quad\!\!\!\Longleftrightarrow\quad\!\!\!
\begin{cases}
-\lambda v_{2n-1}-v_{2n}=0,&\;\mbox{for all } 1\le n\le N;\\
v_{2n-1}+(\sigma_n-\lambda)v_{2n}=0,&\;\mbox{for all }1\le n\le N;\\
\sum_{n=1}^N v_{2n}=0.
\end{cases}\label{Hautus2}
\end{align}
Then, considering the right-hand-side of~\eqref{Hautus2}, we distinguish the following cases:
\begin{itemize}
\item[-] If~$\lambda=0$, from the first line we find~$v_{2n}=0$, for all~$1\le n\le N$ and, subsequently from the second line, ~$v_{2n-1}=0$, for all~$1\le n\le N$. Hence $v=0$.
\item[-] If~$\lambda\ne 0$, from the first and third lines it follows that~$\sum_{n=1}^N v_{2n-1}=0$. From the third line we have $\sum_{n=1}^N v_{2n} =0$, and subsequently from the second line, we have~$\sum_{n=1}^N \sigma_nv_{2n}=0$. Again, from the first line~$\sum_{n=1}^N \sigma_nv_{2n-1}=0$ and, subsequently, from the second line,~$\sum_{n=1}^N \sigma_n^2v_{2n}=0$. Repeating the argument we find that, with~$w\in\{w^{\bfe},w^{\bfo}\}$ where~$w^{\bfe}\in\bbR^{1\times N}$, and~$w^{\bfo}\in\bbR^{1\times N}$ are defined by
$w^{\bfe}_{n}=v_{2n}$ and $w^{\bfo}_{n}=v_{2n-1}$, it holds that~$w\clV_\varSigma=0$, where
\begin{align}\label{VanderSigma}
\clV_\varSigma\coloneqq\begin{bmatrix}
1&\sigma_1&\sigma_1^2&\dots&\sigma_1^{N-1}\\
1&\sigma_2&\sigma_2^2&\dots&\sigma_2^{N-1}\\
\vdots&\vdots&\dots&\vdots\\
1&\sigma_N&\sigma_N^2&\dots&\sigma_N^{N-1}
\end{bmatrix}.
\end{align}
Note that~$\clV_\varSigma$ is a Vandermonde matrix, which is nonsingular since $\sigma_i\ne\sigma_j$ for all $1\le i<j\le N$. In this case we conclude that~$w=0$, and thus~$v=0$.
\end{itemize}

Therefore, we conclude that~$\rank\begin{bmatrix}\bfA_{\varSigma}-\lambda\Id_{nN}&\bfB\end{bmatrix}=2N$, which ends the proof, due to the Hautus test~\eqref{eq:HautusContr}.
\end{proof}

Finally, note that, for a single parameter,  we also have that~$(\clA_\sigma,B)$ is controllable, due to the Kalman condition and 
$\rank \begin{bmatrix}
  B & A_{\sigma} B 
 \end{bmatrix} =\rank \begin{bmatrix}
  0&1\\1&- \sigma
 \end{bmatrix} =2$.


 \subsection{Multi-compartment Models}\label{sS:MComp}
We consider  a $3$-compartment model dynamics
 \begin{align}
\dot x&=\clA x+Bu,\qquad x(0)=x_\circ\in\bbR^{3\times1},\notag
\end{align}
with~$B\in\bbR^{3\times1}$ and~$\clA\in\bbR^{3\times3}$ in the form
\begin{align}\label{sys-comp-gen}
\clA \coloneqq \begin{bmatrix} -(b_{12}+b_{13}+a_1) & b_{21} &  b_{31} \\ b_{12} & -(b_{21} + b_{23}+ a_2) & b_{32} \\ b_{13}  & b_{23} & -(b_{31} + b_{32} + a_3)\end{bmatrix}
\end{align}
where $b_{ij}\ge0$ for $1\le i,j\le 3$ (see, e.g.,~\cite{farkasHWS21}).  If $a_i = 0$ for $1\le i\le 3$ the model is called closed, otherwise it is called open. We can interpret~$a_i>0$ as a sink in the~$i$-th compartment, and~$a_i<0$ as a source in the same compartment. The terms~$b_{ij}$ and~$b_{ji}$ indicate the ``flow'' between the~$i$-th and~$j$-th compartments.


\subsubsection{A Catenary Model with Closed Uncertain Dynamics}\label{sS:MComp-cat-closed}
 Consider the closed catenary system with uncertain flow between the second and third compartment
  \begin{equation}\notag
\clA_{\sigma_i}= \begin{bmatrix}-b_{12}&b_{21}&0\\b_{12}&-(b_{21}+b_{23})&b_{32}+\sigma_i\\0&b_{23}&-b_{32}-\sigma_i\end{bmatrix}, \qquad B = \begin{bmatrix} 1\\ 0 \\ 0 \end{bmatrix},
\end{equation}
with ensemble of parameters~$\varSigma=(\sigma_i)_{i=1}^N$. Then, the ensemble of systems $(\clA_{\sigma_i},B)_{i=1}^N$ is not ensemble stabilizable. Indeed, using the Hautus test we look for a vector~$v\ne0$ satisfying  $0=v\begin{bmatrix}\bfA_{\varSigma}-\lambda\Id_{nN}&\bfB\end{bmatrix}$.
Consider the case~$\lambda=0$.
For~$N\ge2$, we can choose a nonzero vector~$w\in\bbR^{1\times N}$ satisfying~${\textstyle\sum_{i=1}^N} w_{i}=0$. Next, we choose~$v$ as~$v_{3i-j}=w_{i}$ for all~$1\le i\le N$ and all~$0\le j\le 2$. In particular there holds~$v\bfB=0$. Since~$\bfA_\varSigma$ is block diagonal, the row vector $z=v\bfA_\varSigma$ has entries satisfying
 \begin{equation}\notag
 \begin{bmatrix}z_{3i-2}& z_{3i-1}&z_{3i}\end{bmatrix}=
w_{i} \begin{bmatrix}1& 1&1\end{bmatrix}\clA_{\sigma_i}=w_{i} \begin{bmatrix}0& 0&0\end{bmatrix}, \qquad\mbox{for all}\quad 1\le i\le N.
\end{equation}
It follows that~$\bfA_{\varSigma}$ is singular with~$\lambda=0$ an eigenvalue of~$\bfA_{\varSigma}$, $0=v\begin{bmatrix}\bfA_{\varSigma}&\bfB\end{bmatrix}$. 
Hence, by the Hautus test, the ensemble $(\clA_{\sigma_i},B)_{i=1}^N$ is not ensemble stabilizable.

Finally, for a single parameter,  we can see (due to the Kalman condition) that~$(\clA_\sigma,B)$ is controllable (and hence stabilizable) if, and only if, $b_{12}b_{23} \ne 0$.

 \subsubsection{A Catenary Model with Open Uncertain Free Dynamics}\label{sS:MComp-cat-open}
 We consider
  \begin{equation}\notag
\clA_{\sigma_i}= \begin{bmatrix}-b_{12}&b_{21}&0\\b_{12}&-(b_{21}+b_{23})&b_{32}\\0&b_{23}&-b_{32}-\sigma_i\end{bmatrix}, \qquad B = \begin{bmatrix} 1\\ 0 \\ 0 \end{bmatrix},
\end{equation}
with $b_{ij}>0$ for $1\le i,j\le N$, and apply the Hautus test to show ensemble controllability of $(\clA_{\sigma_i},B)_{i=1}^N$. For any given row vector~$v=[ v_1\;v_2\;\dots \;v_{3N}]\in\bbC^{1\times 3N}$, analogously to~\eqref{Hautus2},
we have $0=v\begin{bmatrix}\bfA_{\varSigma}-\lambda\Id_{nN}&\bfB\end{bmatrix}$ if, and only if,
\begin{subequations}\label{HautusCM-cat}
\begin{align}
&-(b_{12}+\lambda) v_{3i-2}+ b_{12}v_{3i-1}=0,&\;\mbox{for all } 1\le i\le N;\label{HautusCM-cat1}\\
&b_{21}v_{3i-2}-(b_{21}+b_{23}+\lambda)v_{3i-1}+b_{23}v_{3i}=0,&\;\mbox{for all }1\le i\le N;\label{HautusCM-cat2}\\
&b_{32}v_{3i-1}-(b_{32}+\lambda+\sigma_i)v_{3i}=0,&\;\mbox{for all }1\le i\le N;\label{HautusCM-cat3}\\
&{\textstyle\sum_{i=1}^N} v_{3i-2}=0.\label{HautusCM-cat4}
\end{align}
\end{subequations}
We assume again that the elements~$\sigma_i$, $1\le i\le N$, are pairwise distinct.
From~\eqref{HautusCM-cat4} and~\eqref{HautusCM-cat1}, we find~$\sum_{i=1}^N v_{3i-1}=0$ and, from~\eqref{HautusCM-cat2}, ~$\sum_{i=1}^N v_{3i}=0$. Hence,
  \begin{equation}\label{HautusCM-cat-sigma0}
{\textstyle\sum_{i=1}^N} v_{3i-j}=0,\quad\mbox{for all}\quad j\in\{0,1,2\}.
\end{equation}
By multiplying~\eqref{HautusCM-cat1} by~$(b_{21}+b_{23}+\lambda)/b_{12}$ and adding the product to~\eqref{HautusCM-cat2}, we find
  \begin{equation}\label{HautusCM-cat5}
\left(b_{21}-\frac{b_{21}+b_{32}+\lambda}{b_{12}}(b_{12}+\lambda)\right)v_{3i-2}+b_{23}v_{3i}=0,\;\mbox{for all }1\le i\le N.
\end{equation}
Now we consider two cases:
\begin{itemize}
\item[-] If~$b_{21}=\frac{b_{21}+b_{32}+\lambda}{b_{12}}(b_{12}+\lambda)$, then by~\eqref{HautusCM-cat5}, we find~$v_{3i}=0$, for all~$1\le i\le N$.  Then, by~\eqref{HautusCM-cat3} we obtain~$v_{3i-1}=0$, for all~$1\le i\le N$ and then also~$v_{3i-2}=0$, for all~$1\le i\le N$, due to~\eqref{HautusCM-cat2}. That is, ~$v=0$.
\item[-] If~$b_{21}\neq\frac{b_{21}+b_{32}+\lambda}{b_{12}}(b_{12}+\lambda)$, we proceed as follows. By~\eqref{HautusCM-cat-sigma0}, with~$j=0$ and~$j=1$, and~\eqref{HautusCM-cat3} we find~$\sum_{i=1}^N \sigma_iv_{3i}=0$. Then~${\textstyle\sum_{i=1}^N} \sigma_iv_{3i-2}=0$, due to~\eqref{HautusCM-cat5}, and also~${\textstyle\sum_{i=1}^N} \sigma_iv_{3i-1}=0$, due to~\eqref{HautusCM-cat1}. In summary we have
 \begin{equation}\label{HautusCM-cat-sigma1}
{\textstyle\sum_{i=1}^N} \sigma_iv_{3i-j}=0,\quad\mbox{for all}\quad j\in\{0,1,2\}.
\end{equation}
We next reason by induction over $k$, using the arguments above, to show that
 \begin{equation}\label{HautusCM-cat-sigmak}
{\textstyle\sum_{i=1}^N} \sigma_i^kv_{3i-j}=0,\mbox{ for all } (j,k)\in\{0,1,2\}\times\{0,1,\dots,N-1\}.
\end{equation}
Indeed, assume that for a given~$\overline k$, with~$1\le \overline k<N-1$, we have~${\textstyle\sum_{i=1}^N} \sigma_n^{\overline k}v_{3i-j}=0$, for all $j\in\{0,1,2\}$.
Then, by~\eqref{HautusCM-cat3} we obtain~$\sum_{i=1}^N\sigma_i^{\overline k+1}v_{3i}=0$ and then also~$\sum_{i=1}^N\sigma_n^{\overline k+1}v_{3i-2}=0$, due to~\eqref{HautusCM-cat5} and, consequently,
$\sum_{i=1}^N\sigma_i^{\overline k+1}v_{3i-1}=0$, due to~\eqref{HautusCM-cat1}. That is, ~${\textstyle\sum_{i=1}^N} \sigma_n^{\overline k+1}v_{3i-j}=0$, for all $j\in\{0,1,2\}$. By induction, we conclude that~\eqref{HautusCM-cat-sigmak} holds true. Therefore, we have
\begin{align}
w^{[j]}\clV_\varSigma=0,\quad\mbox{ for all}\quad j\in\{0,1,2\},\notag
\end{align}
where~$w^{[j]}\in\bbC^{1\times N}$ is the row vector with entries~$w^{[j]}_{(1,i)}=v_{3i-j}$ and~$\clV_\varSigma$ is the Vandermonde matrix in~\eqref{VanderSigma}. Since the elements~$\sigma_i$ are pairwise distinct, we conclude that~$w^{[j]}=0$, for all~$j\in\{0,1,2\}$, which implies that~$v=0$.
\end{itemize}

In either case we have~$v=0$, which means that~$\rank\begin{bmatrix}\bfA_{\varSigma}-\lambda\Id_{nN}&\bfB\end{bmatrix}=3N$, for all~$\lambda$. Hence, by the Hautus test, we have that~$(\clA_{\sigma_i},B)_{i=1}^N$ is ensemble controllable.

 \subsubsection{A Cyclic Model with Open Uncertain Free Dynamics}\label{sS:MComp-cyc-open}
 We consider
  \begin{equation}\label{sys-CM-cyc}
\clA_{\sigma_i}= \begin{bmatrix}-b_{12}&0&\sigma_i\\b_{12}&-b_{23}&b_{32}\\0&b_{23}&-b_{32}\end{bmatrix},\qquad B=\begin{bmatrix}0\\1\\0\end{bmatrix},
\end{equation}
with $b_{ij}>0$ for $1\le i,j\le N$, and with~$\sigma_i\ne0$. From the discussion following~\eqref{sys-comp-gen} we have that~\eqref{sys-CM-cyc} is a multi-compartment model only if $\sigma_i \ge 0$. Moreover, by writing~$-b_{32}=-(b_{32}+\sigma_i-\sigma_i)$, it is open iff~$\sigma_i > 0$. We again apply the Hautus test to show ensemble controllability of~$(\clA_{\sigma_i},B)_{i=1}^N$. For any given row vector~$v=[ v_1\;v_2\;\dots \;v_{3N}]\in\bbC^{1\times 3N}$, analogously to~\eqref{Hautus2}, we have $0=v\begin{bmatrix}\bfA_{\varSigma}-\lambda\Id_{nN}&\bfB\end{bmatrix}$ if, and only if,
\begin{subequations}\label{HautusCM-cyc}
\begin{align}
&-(b_{12}+\lambda) v_{3i-2}+b_{12}v_{3i-1}=0,&\;\mbox{for all } 1\le i\le N;\label{HautusCM-cyc1}\\
&-(b_{23}+\lambda)v_{3i-1}+b_{32}v_{3i}=0,&\;\mbox{for all }1\le i\le N;\label{HautusCM-cyc2}\\
&\sigma_iv_{3i-2}+b_{23}v_{3i-1}-(b_{32}+\lambda)v_{3i}=0,&\;\mbox{for all }1\le i\le N;\label{HautusCM-cyc3}\\
&{\textstyle\sum_{i=1}^N} v_{3i-1}=0.\label{HautusCM-cyc4}
\end{align}
\end{subequations}
We assume again that the elements~$\sigma_i$, $1\le i\le N$, are pairwise distinct.
Now, we consider two cases:
\begin{itemize}
\item[-] Case~$\lambda\ne-b_{12}$. From~\eqref{HautusCM-cyc4} and~\eqref{HautusCM-cyc1}, we find~$\sum_{i=1}^N v_{3i-2}=0$. Further, from~\eqref{HautusCM-cyc4} and~\eqref{HautusCM-cyc2} we find~$\sum_{i=1}^N v_{3i}=0$. Hence,
  \begin{equation}\label{HautusCM-cat-sigma02}
{\textstyle\sum_{i=1}^N} v_{3i-j}=0,\quad\mbox{for all}\quad j\in\{0,1,2\}.
\end{equation}
By induction, we argue that
 \begin{equation}\label{HautusCM-cyc-sigmak}
{\textstyle\sum_{i=1}^N} \sigma_i^kv_{3i-j}=0,\mbox{ for all } (j,k)\in\{0,1,2\}\times\{0,1,\dots,N-1\}.
\end{equation}
Indeed, from~\eqref{HautusCM-cat-sigma02} we know that~\eqref{HautusCM-cyc-sigmak} holds for~$k=\overline k=1$. Assume now that ${\textstyle\sum_{i=1}^N} \sigma_i^{\overline k}v_{3i-j}=0$, for all~$j\in\{0,1,2\}$.
Then, by~\eqref{HautusCM-cyc3}, $\sum_{i=1}^N \sigma_i^{\overline k+1}v_{3i-2}=0$, by~\eqref{HautusCM-cyc1}, $\sum_{i=1}^N \sigma_i^{\overline k+1}v_{3i-1}=0$, and by~\eqref{HautusCM-cyc2}, $\sum_{i=1}^N \sigma_i^{\overline k+1}v_{3i-1}=0$. That is, ${\textstyle\sum_{i=1}^N} \sigma_i^{\overline k+1}v_{3i-j}=0$, for all~$j\in\{0,1,2\}$.
Thus, we conclude that~\eqref{HautusCM-cyc-sigmak} holds true. Therefore, we have
\begin{align}
w^{[j]}\clV_\varSigma=0,\quad\mbox{ for all}\quad j\in\{0,1,2\},\notag
\end{align}
where~$w^{[j]}\in\bbC^{1\times N}$ is the row vector with entries~$w^{[j]}_{(1,i)}=v_{3i-j}$ and~$\clV_\varSigma$ as in~\eqref{VanderSigma}. Since the elements~$\sigma_i$ are pairwise distinct, we conclude that~$w^{[j]}=0$, for all~$j\in\{0,1,2\}$, which implies that~$v=0$.
\item[-] Case~$\lambda=-b_{12}$. By~\eqref{HautusCM-cyc1} we have that~$v_{3i-1}=0$ for all~$1\le i\le N$. Then, by~\eqref{HautusCM-cyc2} it follows~$v_{3i} =0$ for all~$1\le i\le N$ and, by~\eqref{HautusCM-cyc3}, it follows that~$\sigma_iv_{3i-2}=0$ for all~$1\le i\le N$. Which implies that~$v_{3i-2}=0$ for all~$1\le i\le N$, since~$\sigma_i\ne0$.
\end{itemize}
Therefore, we conclude that~$(\clA_{\sigma_i},B)_{i=1}^N$ is ensemble controllable.

\subsection{Spectral Heat Equation}
Given a finite ensemble of parameters $\varSigma = (\sigma_i)_{i=1}^N$, we consider the parameterized heat equation, with state~$y=y(t,s)$, defined for time~$t>0$ and for~$s$ in the spatial interval/domain~$(0,2\pi)$,
\begin{subequations}\label{eq:spec_heat}
\begin{align}
 &\frac{\p}{\p t} y(t,s) = \frac{\p^2}{\p s^2} y(t,s) + \sigma_i y(t,s) + \Id_{\omega}(s) u(t),\qquad
 y(0,s) = y_{\circ}, \label{eq:spec_heat1}\\
 &y(t,0) = 0=y(t,2\pi), \label{eq:spec_heat2}
\end{align}
\end{subequations}
where~\eqref{eq:spec_heat2}
specifies the Dirichlet boundary conditions. Our actuator is the indicator function~$\Id_{\omega}$ of the subinterval~$\omega \coloneqq (\omega_1,\omega_2) \subseteq (0,2\pi)$. Here, the uncertain parameter is the reaction coefficient~$\sigma_i$.

We recall that, the eigenvalues of the (negative) Laplacian~$- \frac{\p^2}{\p s^2}$ are given by $\lambda_k = \frac{k^2}{4}$, for integers~$k\ge1$, corresponding to a complete system of eigenfunctions $\{\varphi_k(s) = \sin(\frac{k}{2}s)\mid k\ge1\}$. For each parameter~$\sigma_i$, a (spectral) Galerkin discretization of the solution $y(t) = \sum_{k=1}^{+\infty} y_k(t) \sin(\frac{k}{2}s)$, can be found as~$y^x(t,s) \coloneqq \sum_{k=1}^M x_k(t) \sin(\frac{k}{2}s)$, where $x(t) = \begin{bmatrix} x_1(t) & x_2(t) & \ldots & x_M(t) \end{bmatrix}^\top$ solves 
\begin{subequations}\label{eq:SpecHeat}
\begin{align}
 \dot{x}&= \clA_{\sigma_i} x + B u,\qquad x(0) = x_{\circ},
\end{align}
with the diagonal~$\clA_{\sigma_i}$ and column~$B$ matrices, with entries~$(\clA_{\sigma_i})_{(j,k)}$ and~$(B)_{(j,1)}$, in the $j$-th row and~$k$-th column, as follows,
\begin{align}
&(\clA_{\sigma_i})_{(j,j)} \coloneqq\sigma_i-\lambda_j\quad\mbox{and}\quad  (\clA_{\sigma_i})_{(j,k)} \coloneqq0, &&\quad\mbox{for}\quad 1\le j\ne k\le M,\label{eq:SpecHeatA}\\
 &(B)_{(j,1)} \coloneqq\frac{2}{j\pi}\Bigl(\cos(\tfrac{j}2\omega_1) - \cos(\tfrac{j}2\omega_2)\Bigr), &&\quad\mbox{for}\quad 1\le j\le M,\label{eq:SpecHeatB}
\end{align}
\end{subequations}
and with~$x_\circ\coloneqq [y_{\circ 1}\;y_{\circ 2}\;\dots\;y_{\circ n}]\in\bbR^n$, where~$y_\circ(s) \eqqcolon \sum_{k=1}^{+\infty}y_{\circ k} \sin(\frac{k}{2}s)$. Note that
~$\Id_{\omega}=\sum_{j=1}^{+\infty} (B)_{(j,1)}\varphi_j$, with~$(B)_{(j,1)}$ as in~\eqref{eq:SpecHeatB} for~$j\in\bbN$.
\begin{lemma}\label{lem:SpecHeat}
If  for all $1\le k< j\le M$ we have that~$(B)_{(j,1)} \ne 0$ and~$4(\sigma_k - \sigma_j)$ is not an integer, then the ensemble $(\clA_{\sigma_i},B)_{i=1}^N$ in \eqref{eq:SpecHeat} is ensemble controllable.
\end{lemma}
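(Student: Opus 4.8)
The plan is to apply the Hautus controllability test~\eqref{eq:HautusContr} to $(\bfA_{\varSigma},\bfB)$, exactly in the style of Lemma~\ref{lem:enscontr} and the catenary examples above. The first observation is that, since each $\clA_{\sigma_i}$ in~\eqref{eq:SpecHeatA} is diagonal with diagonal entries $\sigma_i-\lambda_j=\sigma_i-\tfrac{j^2}{4}$, $1\le j\le M$, the block matrix $\bfA_{\varSigma}$ is itself diagonal, with the $MN$ diagonal entries $\sigma_i-\tfrac{j^2}{4}$, $1\le i\le N$, $1\le j\le M$. In particular ${\rm Eig}(\bfA_{\varSigma})=\{\sigma_i-\tfrac{j^2}{4}\mid 1\le i\le N,\ 1\le j\le M\}$, and I would index the entries of a row vector $v\in\bbC^{1\times MN}$ by pairs $(i,j)$ accordingly, so that the $(i,j)$-row of $\bfB$ equals $(B)_{(j,1)}$.

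Next I would fix $\lambda\in{\rm Eig}(\bfA_{\varSigma})$ and a row vector $v$ with $0=v\begin{bmatrix}\bfA_{\varSigma}-\lambda\Id_{MN}&\bfB\end{bmatrix}$. Since $\bfA_{\varSigma}-\lambda\Id_{MN}$ is diagonal, the identity $v(\bfA_{\varSigma}-\lambda\Id_{MN})=0$ reads $(\sigma_i-\tfrac{j^2}{4}-\lambda)\,v_{(i,j)}=0$ for all $(i,j)$, so $v_{(i,j)}=0$ unless $(i,j)\in S_\lambda\coloneqq\{(i,j)\mid \sigma_i-\tfrac{j^2}{4}=\lambda\}$. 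The key step is to show $\#S_\lambda\le1$: if $(i,j),(i',j')\in S_\lambda$, then $\sigma_i-\sigma_{i'}=\tfrac{j^2-j'^2}{4}$; when $i=i'$ this forces $j^2=j'^2$, hence $j=j'$ (both positive), while when $i\ne i'$ it gives $4(\sigma_i-\sigma_{i'})=j^2-j'^2\in\bbZ$, contradicting the hypothesis that $4(\sigma_k-\sigma_l)$ is not an integer for distinct indices $k\ne l$. Thus $v$ is supported on at most one index $(i_0,j_0)$, and then $0=v\bfB=v_{(i_0,j_0)}\,(B)_{(j_0,1)}$; since $(B)_{(j_0,1)}\ne0$ by hypothesis, $v_{(i_0,j_0)}=0$, so $v=0$. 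Hence $\rank\begin{bmatrix}\bfA_{\varSigma}-\lambda\Id_{MN}&\bfB\end{bmatrix}=MN$ for every $\lambda\in{\rm Eig}(\bfA_{\varSigma})$, and the Hautus test yields controllability of $(\bfA_{\varSigma},\bfB)$, i.e.\ ensemble controllability of $(\clA_{\sigma_i},B)_{i=1}^N$.

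There is no genuinely hard analytic step here: the content is entirely in recognizing that the non-integrality condition $4(\sigma_i-\sigma_{i'})\notin\bbZ$ is precisely what prevents an eigenvalue of one block $\clA_{\sigma_i}$ from coinciding with one of another block $\clA_{\sigma_{i'}}$ (within a single block the eigenvalues $\sigma_i-\tfrac{j^2}{4}$ are automatically distinct in $j$, since $j\mapsto\tfrac{j^2}{4}$ is injective on $\{1,\dots,M\}$), so that $\bfA_{\varSigma}$ has simple spectrum and the Hautus test collapses to the scalar nondegeneracy conditions $(B)_{(j,1)}\ne0$. Equivalently, one could first note that each single-parameter pair $(\clA_{\sigma_i},B)$ is controllable by the PBH test (diagonal $\clA_{\sigma_i}$ with distinct diagonal entries and all $(B)_{(j,1)}\ne0$) and that ${\rm Eig}(\clA_{\sigma_i})\cap{\rm Eig}(\clA_{\sigma_{i'}})=\emptyset$ for $i\ne i'$, then invoke the controllability analogue of Lemma~\ref{lem:stab-enstab}; but carrying out the Hautus computation directly, as above, keeps the argument self-contained and consistent with the other examples of this section. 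The only point warranting a line of care is the verification that $\#S_\lambda\le1$.
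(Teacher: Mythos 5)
Your proof is correct and follows essentially the same route as the paper: the non-integrality hypothesis forces the diagonal matrix $\bfA_{\varSigma}$ to have simple spectrum (your $\#S_\lambda\le1$ step is exactly the paper's observation that the eigenvalues $\sigma_i-\lambda_j$ are pairwise distinct), after which the nonvanishing of the entries of $\bfB$ gives full rank in the Hautus test. The only cosmetic difference is that you phrase the Hautus test via left null vectors while the paper counts the rank of the diagonal matrix directly; the content is identical.
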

\begin{proof}
If for all $1\le k< j\le M$, $4(\sigma_k - \sigma_j)$ is not an integer, then the eigenvalues~$\beta=\sigma_i-\lambda_j$ of~$\bfA_{\varSigma}$, $1\le i\le N$, $1\le j\le M$, are pairwise distinct. In this case $\rank\begin{bmatrix} \bfA_{\varSigma} - \beta \Id_{MN} \end{bmatrix} = MN-1$ for all $\beta \in {\rm Eig}(\bfA_{\varSigma})$. Replacing the only vanishing column of the diagonal matrix~$ \bfA_{\varSigma} - \beta \Id_{MN}$ by~$\bfB$ (recall~\eqref{eq:extsys-op}) we obtain a nonsingular matrix. Note that~$(\bfB)_{(i,1)}\ne0$ for all $1\le  i\le MN$, since~$(B)_{(j,1)} \ne 0$ for all $1\le  j\le M$. Thus, we can conclude that~$\rank \begin{bmatrix}
    \bfA -  \beta \Id_{MN} & \bfB
   \end{bmatrix} = MN
$ for all $ \beta \in  {\rm Eig}(\bfA_{\varSigma})$ and, consequently, that~$(\clA_{\sigma_i},B)_{i=1}^N$ is ensemble controllable, by the Hautus test~\eqref{eq:HautusContr}. \hspace{-0.2cm} \end{proof}

\section{Numerical Experiments}\label{sec:numEx}
We present numerical experiments supporting our theoretical findings for the examples discussed in Section~\ref{sec:enstabsys}. More precisely, for given parameter ensembles $\varSigma = (\sigma_i)_{i=1}^N$ and linear systems~\eqref{eq:sys}, we show the stabilizing performance of the feedback $-\frac1{\alpha}\bfB^\top \bfPi_{\varSigma}\clE$ introduced in~\eqref{eq:linFB}, with $\bfPi_{\varSigma}\succ0$ solving the  Riccati equation \eqref{eq:bigRiccati}. This performance is compared to that of the optimal  feedback~$-\frac1{\alpha}B^\top\Pi_{\bar{\sigma}}$ corresponding to the mean~$\bar{\sigma} \coloneqq \frac1N \sum_{i=1}^N \sigma_i$ of $\varSigma$, where~$\Pi_{\bar\sigma}\succ0$ is the solution of the Riccati equation~\eqref{eq:smallRiccati} (with~$\clA_{\bar\sigma}$ in place of~$\clA_{\sigma}$).

Note that we can write~$\bfB=\clE B$ and~$-\frac1{\alpha}\bfB^\top\bfPi_{\varSigma}\clE=-\frac1{\alpha}B^\top\clE^\top\bfPi_{\varSigma}\clE$. Hence we shall compare the two feedback control operators as follows,
\begin{equation}\label{preFeed}
-\frac1{\alpha}B^\top \Pi\quad\mbox{with}\quad\Pi\in\left\{\clE^\top\bfPi_{\varSigma}\clE,\Pi_{\bar{\sigma}}\right\},\qquad\bar{\sigma} \coloneqq \frac1N \sum_{i=1}^N \sigma_i.
\end{equation}

We shall refer to either of the operators $\Pi$ in~\eqref{preFeed} as feedback operator.

Recalling the notation in~\eqref{eq:fbcontrol}, for a given parameter~$\sigma\in\bbR$, the state-control pair associated with the feedback~$\clE^\top\bfPi_{\varSigma}\clE$ is~$(x_{\varSigma,\sigma},u_{\varSigma,\sigma})$. Similarly, we denote the state-control pair associated with the feedback~$\Pi_{\bar\sigma}$ by~$(x_{\bar\sigma,\sigma},u_{\bar\sigma,\sigma})$.

In concrete examples hereafter, it is convenient to denote the ensemble of parameters as an ordered set by~$\varSigma=(\sigma_i)_{i=1}^N\eqqcolon\{\sigma_1; \sigma_2;\dots;\sigma_N\}$

\subsection{The Oscillator}\label{sec:NumOsc}
Consider~\eqref{sys-oscu} with $x_{\circ} = \begin{bmatrix} 1 & 0 \end{bmatrix}^\top$. In the Riccati equations~\eqref{eq:bigRiccati} and~\eqref{eq:smallRiccati} we take~$\alpha = 0.1$ and run the simulations for time~$t\in(0,T)$ with~$T = 5$. We start by comparing the pairs~$(x_{\varSigma,\sigma},u_{\varSigma,\sigma})$  and~$(x_{\bar\sigma,\sigma},u_{\bar\sigma,\sigma})$ for the ensemble 
\begin{equation}\label{osc.Sig1}
\varSigma=(\sigma_i)_{i=1}^5\coloneqq \{-0.5;-0.25;0;0.25;0.5\}.
\end{equation}
The time evolution of the controls and norms of the corresponding states are displayed in Fig.~\ref{fig:4}, for the cases~$\sigma\in\varSigma$.  
\begin{figure}[htb]
    \centering
    \includegraphics[width=.85\textwidth]{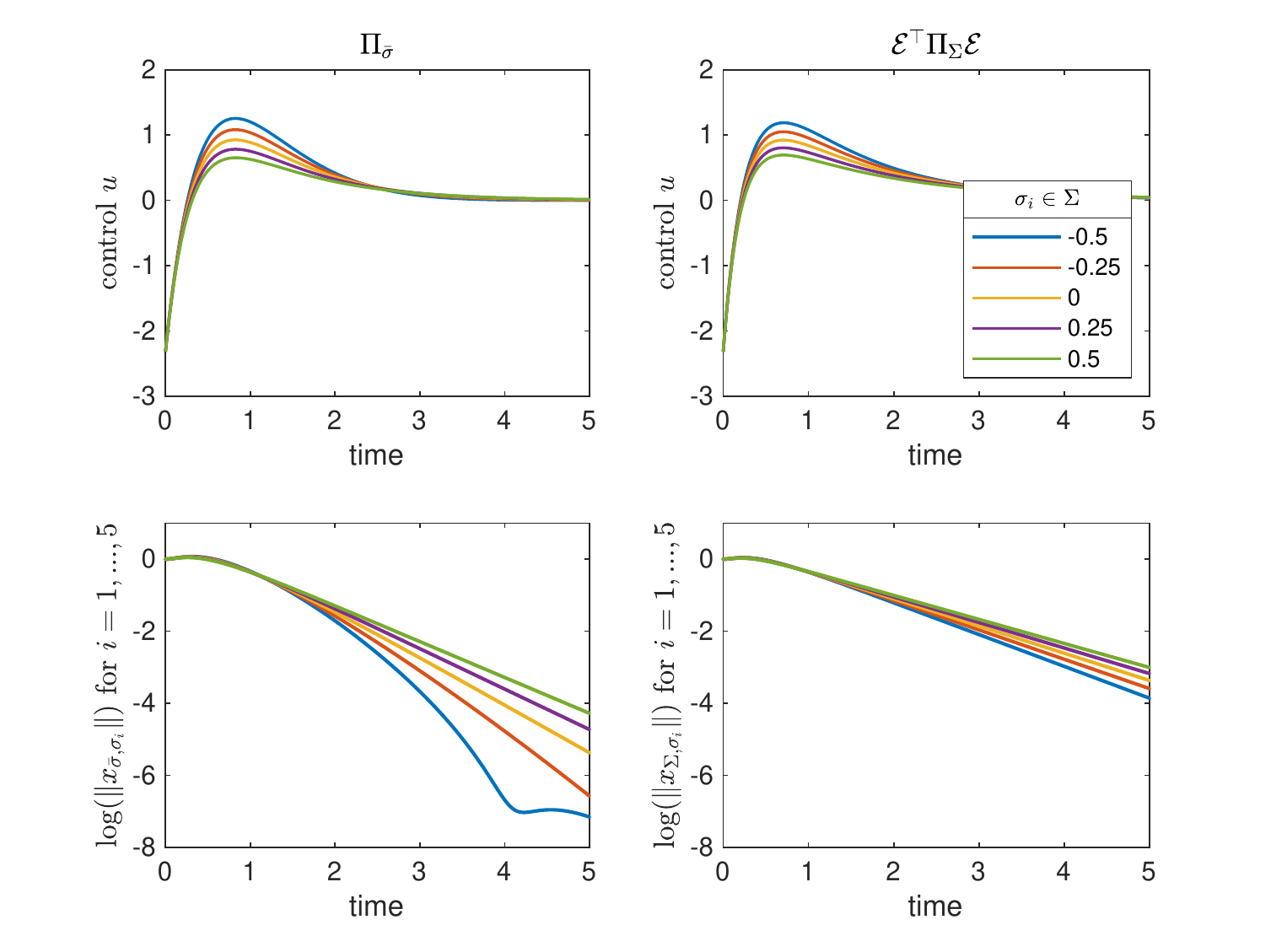}
    \caption{Time evolution of control and norm of state, for $\sigma_i \in\varSigma$ as in~\eqref{osc.Sig1}. Left: feedback~$\Pi_{\bar{\sigma}}$. Right: feedback~$\clE^\top\bfPi_{\varSigma}\clE$.}
    \label{fig:4}
\end{figure}
\begin{table}[htb]
\centering
\renewcommand{\arraystretch}{1.5}
\begin{tabular}{|c|l|c|c|}
\hline
\multirow{2}{6em}{Control cost} & $\frac15 \sum_{i=1}^5\int_0^T \frac{\alpha}{2} (u_{\bar{\sigma},\sigma_i}(t))^\top u_{\bar{\sigma},\sigma_i}(t)\,\rmd t$ & 0.0673 \\
\cline{2-3}
& $\frac15 \sum_{i=1}^5 \int_0^T\frac{\alpha}{2}(u_{\varSigma,\sigma_i})^\top u_{\varSigma,\sigma_i}(t)\,\rmd t$ & 0.0672\\
\hline
\multirow{2}{6em}{State cost} & $\frac15 \sum_{i=1}^5\int_0^T \frac12 (x_{\bar{\sigma},\sigma_i})^\top x_{\bar{\sigma},\sigma_i}(t)\,\rmd t$& 0.5732\\
\cline{2-3}
& $\frac15 \sum_{i=1}^5\int_0^T \frac12 (x_{\varSigma,\sigma_i})^\top x_{\varSigma,\sigma_i}(t)\,\rmd t$ & 0.5982\\
\hline
\end{tabular}
\vskip 10pt
\caption{Comparison of the cost for $\sigma_i \in\varSigma$ as in~\eqref{osc.Sig1}.}
  \label{tab:1}
\end{table}
Averages of control and state components of the associated costs (restricted to the time interval~$(0,5)$) are collected in Table~\ref{tab:1}.
Comparing the controls, we see that the controls $u_{\bar{\sigma},\sigma_i}$ have marginally higher costs (on average) while steering the states faster to zero, which leads  to lower state costs (on average) for $x_{\bar{\sigma},\sigma_i}$.

Next, we increase the range of the parameters, which corresponds to more uncertainty. The results are shown in Fig.~\ref{fig:5} with the associated costs compared in Table~\ref{tab:2},  for
\begin{equation}\label{osc.Sig2}
\varSigma=(\sigma_i)_{i=1}^5\coloneqq \{-4;-2;0;2;4\}.
\end{equation}
\begin{figure}[htb]
    \centering
    \includegraphics[width=.85\textwidth]{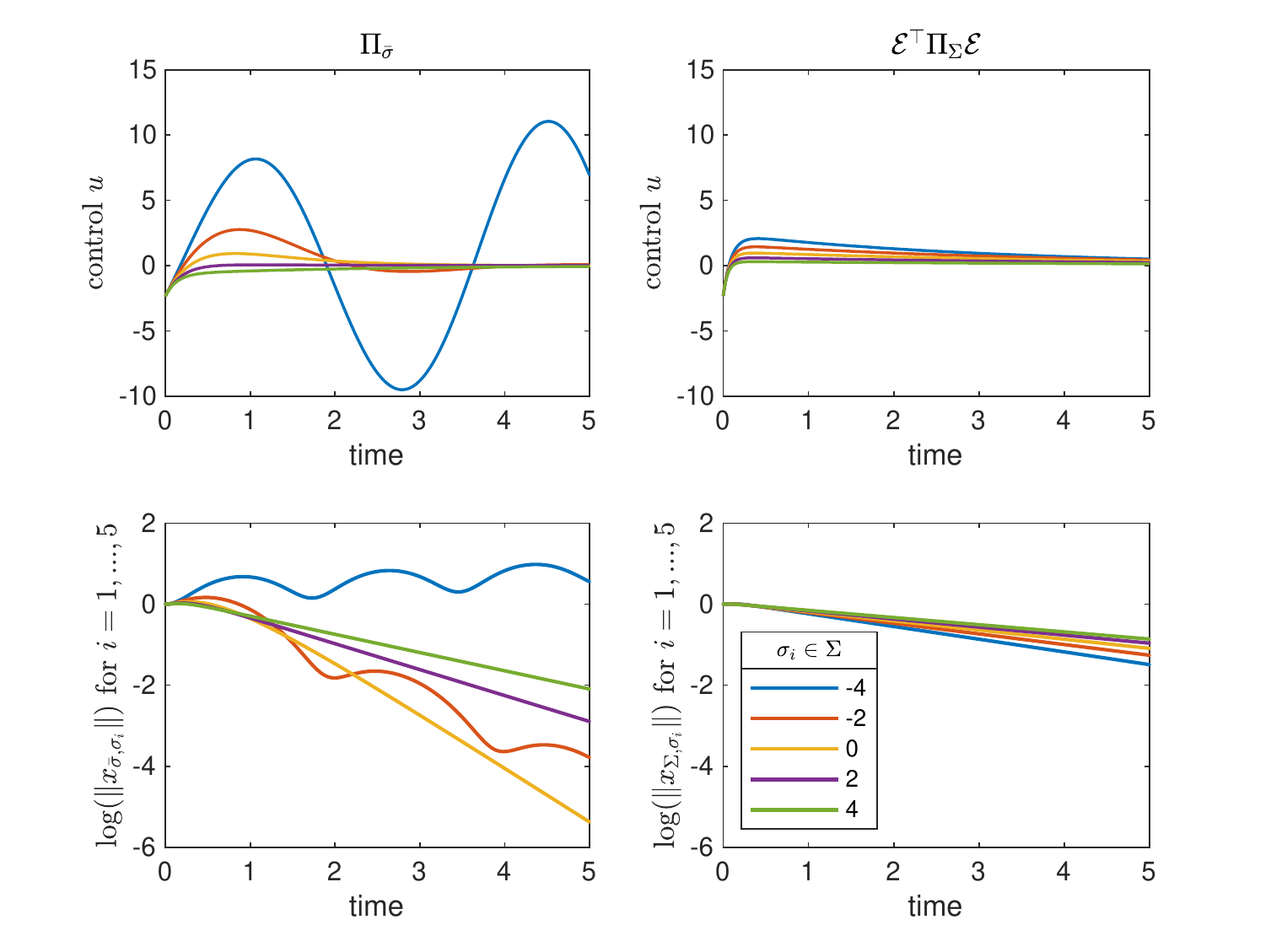}
    \caption{Time evolution of control and norm of state, for $\sigma_i \in\varSigma$ as in~\eqref{osc.Sig2}. Left: feedback~$\Pi_{\bar{\sigma}}$. Right: feedback~$\clE^\top\bfPi_{\varSigma}\clE$.} \label{fig:5}
\end{figure}
\begin{table}[htb]
\centering
\renewcommand{\arraystretch}{1.5}
\begin{tabular}{|c|l|c|c|}
\hline
\multirow{2}{6em}{Control cost} & $\frac15 \sum_{i=1}^5\int_0^T\frac{\alpha}{2} (u_{\bar{\sigma},\sigma_i}(t))^\top u_{\bar{\sigma},\sigma_i}(t)\,\rmd t$ & 2.4505\\
\cline{2-3}
& $\frac15 \sum_{i=1}^5 \int_0^T\frac{\alpha}{2}(u_{\varSigma,\sigma_i}(t))^\top u_{\varSigma,\sigma_i}(t)\,\rmd t$ & 0.1570\\
\hline
\multirow{2}{6em}{State cost} & $\frac15 \sum_{i=1}^5\int_0^T \frac{1}{2} (x_{\bar{\sigma},\sigma_i}(t))^\top x_{\bar{\sigma},\sigma_i}(t)\,\rmd t$& 2.3027\\
\cline{2-3}
& $\frac15 \sum_{i=1}^5\int_0^T \frac{1}{2} (x_{\varSigma,\sigma_i}(t))^\top x_{\varSigma,\sigma_i}(t)\,\rmd t$ & 1.0620\\
\hline
\end{tabular}
\vskip 10pt
\caption{Comparison of the cost, for $\sigma_i \in\varSigma$ as in~\eqref{osc.Sig2}.}
  \label{tab:2}
\end{table}

In these two experiments we observe that in situations with little uncertainty about the parameters, the feedback~$\Pi_{\bar{\sigma}}$ is an effective choice as it requires solving the lower-dimensional algebraic Riccati equation~\eqref{eq:smallRiccati}, instead of~\eqref{eq:bigRiccati}. However, if the uncertainty (i.e., the parameter range) is large, the feedback $\Pi_{\bar{\sigma}}$ may fail to stabilize the system, whereas the feedback $\clE^\top \bfPi_{\varSigma} \clE$ appears to be more robust.
In particular, we see that~$\Pi_{\bar{\sigma}}$ fails to stabilize the system for $\sigma_1 = -4$, whereas~$\clE^\top \bfPi_{\varSigma} \clE$ is stabilizing for all $\sigma_i \in \{-4;-2;0;2;4\}$.

In these experiments, and following ones, we solved the algebraic Riccati equation using the \texttt{matlab} function \texttt{icare}. In the above examples, we obtained residuals as
\begin{align}
&\|\bfA_{\varSigma}^\top{\bfPi_{\varSigma} + \bfPi_{\varSigma} \bfA_{\varSigma} }- \frac{1}{\alpha}{\bfPi_{\varSigma} \bfB \bfB^\top \bfPi_{\varSigma}} + \frac{1}{N}\Id_{nN} \|_{\max} \approx 10^{-12};\notag\\
&\|\clA_{\bar\sigma}^\top{\Pi_{\bar\sigma} + \Pi_{\bar\sigma} \clA_{\bar\sigma} }- \frac{1}{\alpha}{\Pi_{\bar\sigma}B B^\top \Pi_{\bar\sigma}} +\Id_{n} \|_{\max} \approx  10^{-15},\notag
\end{align}
with~$\|A\|_{\max} \coloneqq \max_{i,j}|a_{i,j}|$.

\subsubsection{On the Eigenvalues of the Closed-loop System}
Here, we consider also the feedback~$\Pi$ given by the mean of the optimal Riccati feedbacks associated to each~$\sigma_i\in\varSigma$, leading us to the  feedback control operator
\begin{equation}\label{preFeed3}
-\frac1{\alpha}B^\top \Pi\quad\mbox{with}\quad\Pi\coloneqq \frac1N \sum_{i=1}^N \Pi_{\sigma_i}.
\end{equation}
Further, we consider several values for~$\alpha$ in the Riccati equations~\eqref{eq:bigRiccati} and~\eqref{eq:smallRiccati}. On the left side in Fig.~\ref{fig:10},
\begin{figure}[htb]
    \centering
    \includegraphics[width=.85\textwidth]{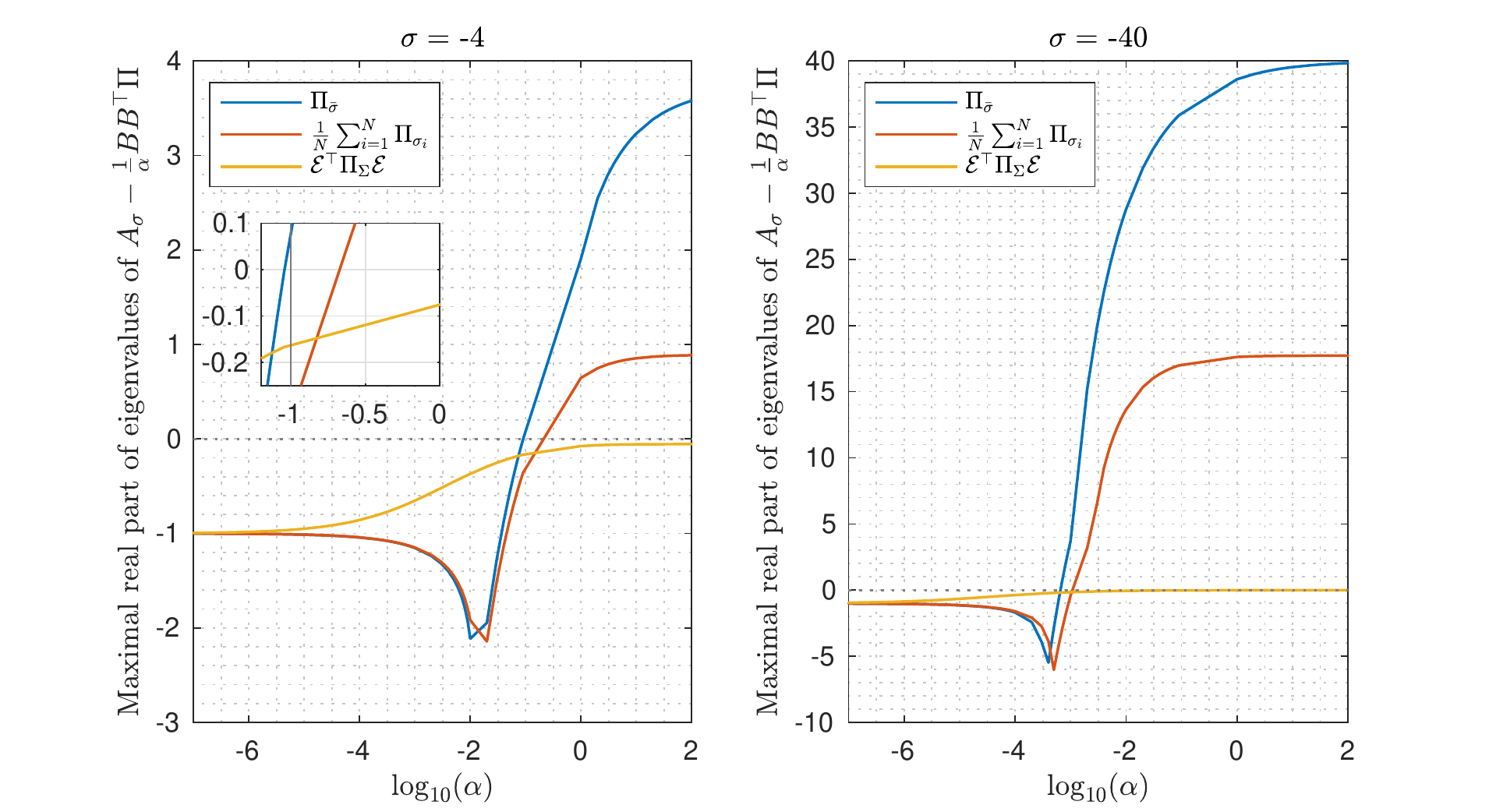}
    \caption{Maximal real part of the eigenvalues of $\clA_{\sigma} - \frac1\alpha BB^\top \Pi$ for
    feedbacks~$\Pi \in \{\clE^\top\bfPi_{\varSigma}\clE,\Pi_{\bar{\sigma}},\frac1N \sum_{i=1}^N \Pi_{\sigma_i}\}$. Left: $\sigma=-4$ and~$\varSigma =\{-4;-2;0;2;4\}$. Right: $\sigma=-40$ and~$\varSigma = \{-40;-20;0;20;40\}$.    
}
    \label{fig:10}
\end{figure} 
we see that the feedback $\Pi_{\bar{\sigma}}$, for $\alpha = 10^{-1}$, does not stabilize the system for $\sigma_i = -4$, whereas the feedback $\clE^\top \bfPi_{\varSigma} \clE$ does; see also Fig.~\ref{fig:5}. Moreover, in this example the feedback $\clE^\top \bfPi_\varSigma \clE$ stabilizes the system for all plotted values of $\alpha >0$. Further, in Fig.~\ref{fig:10} we see that, when increasing the parameter range, in this example, the maximal real part of the eigenvalues of $\clA_{\sigma} - \frac1\alpha BB^\top \Pi$, with~$\Pi\in\{\Pi_{\bar{\sigma}}, \frac1N \sum_{i=1}^N \Pi_{\sigma_i}\}$ change sign for smaller values of $\alpha$.

\subsubsection{On the Operator Norm of~$\bfPi_\varSigma$.}
We recall that some results in Section~\ref{sec:linfb} (see, e.g.,~\eqref{eq:smallness} within Corollary~\ref{coro:stableFeedback}) 
depend on the constants~$\beta_1,\beta_2$ in~\eqref{eq:Pinorm} and, hence, also on $\|\bfPi_\varSigma\|$. Therefore, we report on experiments concerning the dependence of $\|\bfPi_{\varSigma}\|$ on the ensemble $\varSigma$.
Let us denote a uniform $N$-elements partition of a bounded interval~$[a,b]\subset\bbR$, $a<b$, as
 the sequence~$[a,b]_N\coloneqq(a+\frac{i-1}{N-1}(b-a))_{i=1}^N$.
 { Table~\ref{tab:4} highlights the fact that $\|\bfPi_{\varSigma}\|$ is smaller for parameters  associated with more stable dynamics (i.e., with larger~$\sigma$ in the present example). Moreover, $\|\bfPi_{\varSigma}\|$ increases with~$N$, see Table~\ref{tab:5}. Note that for larger~$N$, the minimal distance between two parameters in~$[a,b]_N$ decreases, hence, the increase of $\|\bfPi_{\varSigma}\|$ aligns with Lemma~\ref{lem:stab-enstab}, which implies (for our example) that stabilizability will fail if we have a repeated unstable parameter~$\sigma$ in~$\varSigma$. Recall also that, by Lemma~\ref{lem:enscontr}, $(\clA_{\sigma_i},B)_{i=1}^N$ is ensemble controllable if the parameters are pairwise distinct.  Finally, in Table~\ref{tab:6}, we fix the number of parameters to be $N=3$, while we increase the interval over which they are distributed.   As expected, the smallest and the largest intervals lead to larger values of $\|\bfPi_{\varSigma}\|$ than the midsize intervals.

\begin{table}[htb]
\centering
\renewcommand{\arraystretch}{1.5}
\begin{tabular}{|c|c|c|c|}
\hline
 $\varSigma$  & $[-1.5,-0.5]_3$ & $[-0.5,0.5]_3$ & $[0.5,1.5]_3$ \\
\cline{1-4}
$\beta_2$ & $36.2087$ & $3.3834$ & $0.8357$\\
\hline
$\beta_1$ & $0.3800$ & $0.3258$ & $0.2786$ \\
\hline
\end{tabular}
\vskip 10pt
\caption{Square root of the largest and smallest eigenvalues of~$\bfPi_{\varSigma}$. Shifting the set of parameters.}
  \label{tab:4}
\end{table}
\begin{table}[htb]
\centering
\renewcommand{\arraystretch}{1.5}
\begin{tabular}{|c|c|c|c|}
\hline
 $\varSigma=[-0.5,0.5]_N$ & $N=3$ & $N=5$ & $N=7$ \\
\cline{1-4}
$\beta_2$ & $3.3834$ & $12.8909$ & $56.7993$\\
\hline
$\beta_1$ & $0.3258$ & $0.2531$ & $0.2141$ \\
\hline
\end{tabular}
\vskip 10pt
\caption{Square root of the largest and smallest eigenvalues of~$\bfPi_{\varSigma}$. Finer partition of an interval.}
  \label{tab:5}
\end{table}
\begin{table}[htb]
\centering
\renewcommand{\arraystretch}{1.5}
\begin{tabular}{|c|c|c|c|c|}
\hline
 $\varSigma=[-0.5L,0.5L]_3$ & $L=1$ & $L=2$& $L=3$ & $L=10$ \\
\cline{1-5}
$\beta_2$ & $3.3834$ & $2.8728$ & $2.7616$ & $3.2346$\\
\hline
$\beta_1$ & $0.3258$ & $0.3141$ & $0.2949$ & $0.1824$ \\
\hline
\end{tabular}
\vskip 10pt
\caption{Square root of the largest and smallest eigenvalues of~$\bfPi_{\varSigma}$. Length of  parameter interval.}
  \label{tab:6}
\end{table}

\begin{remark}\label{rem:nnecessary}
Let us comment on the applicability of condition~\eqref{eq:smallness} in Corollary~\ref{coro:stableFeedback} to the present example. While from Section~\ref{sec:NumOsc} we know that the feedback~$\bfclE^\top \bfPi_\varSigma \bfclE$ stabilizes the systems for the ensemble~$\varSigma = \{-0.5;-0.25;0;0.25;0.5\}$, Table~\ref{tab:5} reveals that~\eqref{eq:smallness} is not fullfilled. The condition is thus sufficient but not necessary. We next report on a specific example where~\eqref{eq:smallness} is applicable. For~$N$ equidistant parameters~$\varSigma = (\sigma_i)_{i=1}^N$ in the interval 
$I \coloneqq [1.475,1.525]$, condition~\eqref{eq:smallness} holds true for all~$N$ up to at least~$256$ and all parameters $\sigma \in I$.
\end{remark}

\subsubsection{Remark on Finite and Infinite Time-horizon Problems}

We have already commented  on the relationship between the open-loop and 
the closed-loop  presentations \eqref{eq:extcontr} and 
\eqref{eq:optcondextE} of the optimal control.

In the finite time-horizon (FTH) case (i.e., with $\infty$ in \eqref{eq:extobj} replaced by $T>0$), the open-loop ensemble optimal control problem has been investigated for instance in \cite{guth2022parabolic}. The FTH optimal feedback control operator is given by~$-\frac1{\alpha}\bfB^\top\mathbf\Pi_{\varSigma}^T$, where~$\mathbf\Pi_{\varSigma}^T=\mathbf\Pi_{\varSigma}^T(t)$ solves, for time~$t\in(0,T)$, the differential Riccati equation
\begin{align}\label{eq:DRE}
-\dot{\mathbf\Pi}_{\varSigma}^{T} = \bfA_{\varSigma}^\top 
\mathbf\Pi_{\varSigma}^T + \mathbf\Pi_{\varSigma}^T 
\bfA_{\varSigma} - \frac{1}{\alpha} \mathbf\Pi_{\varSigma}^T 
\bfB \bfB^\top \mathbf\Pi_{\varSigma}^T + \frac{1}{ N} 
\Id_{nN},\qquad\mathbf{\Pi}_{\varSigma}^T(T) = \mathbf{0}.
\end{align}
Recall that in the infinite time-horizon (ITH) case the optimal feedback control operator is given by~$-\frac1{\alpha}\bfB^\top\mathbf\Pi_{\varSigma}$ where~$\mathbf\Pi_{\varSigma}$ solves the algebraic  Riccati equation~\eqref{eq:bigRiccati}.  

We next give a numerical example, with $\varSigma=\{-0.5;-0.25;0;0.25;0.5\}$  to demonstrate, firstly, that (up to numerical error) the computed open-loop and closed-loop ensemble optimal controls do indeed coincide and, secondly, that such a comparison is more delicate for the ITH problem, since computing for the ITH inevitably necessitates an additional approximation step. 
Note also that the free dynamics of~\eqref{sys-oscu} is not stable for $\sigma \le 0$.
For the difference between the open-loop solution $(u_{\rm OL},\bfx_{\rm OL})$ and the solution $(u_{\varSigma}^T,\bfx_{\varSigma}^T)$ corresponding to the differential Riccati equation we obtain
$ \|u_{\rm OL} - u_{\varSigma}^T\|_{L^2((0,T);\bbR)} = 1.2107 \cdot 10^{-5}$ and
$ \|\bfx_{\rm OL} - \bfx_{\varSigma}^T\|_{L^2((0,T);\bbR^{10})} = 1.5282 \cdot 10^{-5}.$
Due to these small values, we see no difference between the optimal open-loop and the closed-loop controls as depicted in Fig.~\ref{fig:outlook} (left). Here  the 
open-loop problem is solved using a gradient method with Barzilai--Borwein steps, see~\cite{azmi2020analysis}. The state and adjoint differential 
equations in the gradient steps as well as the differential Riccati 
equation \eqref{eq:DRE} are solved using the \texttt{matlab} function 
\texttt{ode45}.

Concerning the asymptotic behavior as $T\to \infty$, we know that $\lim_{T\to \infty} \mathbf{\Pi}_{\varSigma}^T(0) = \bfPi_{\varSigma}$ (see, e.g.,~\cite[Thm. 2.3.9.1]{lasiecka2000control_2}). However, depending on the structure of the underlying 
system, we may need large~$T$ to obtain~$\mathbf{\Pi}_{\varSigma}^T(0)$ accurately approximating~$\bfPi_{\varSigma}$, as illustrated in Fig.~\ref{fig:outlook} (right).
\begin{remark}
The value~$\bfPi_\varSigma^T(0)$ for the solution of~\eqref{eq:DRE}, at time~$t=0$, coincides with~$\widehat\bfPi(-T)$ where $\widehat\bfPi(t)$ solves the dynamics in~\eqref{eq:DRE} for time~$t\in(-\infty,0)$ with final condition~$\widehat\bfPi(0)=0$, at time~$t=0$. Thus, in fact, we can compute~$\widehat\bfPi(-T)$ instead.  
\end{remark}
\begin{figure}[htb]
    \centering
    \includegraphics[width=.85\textwidth]{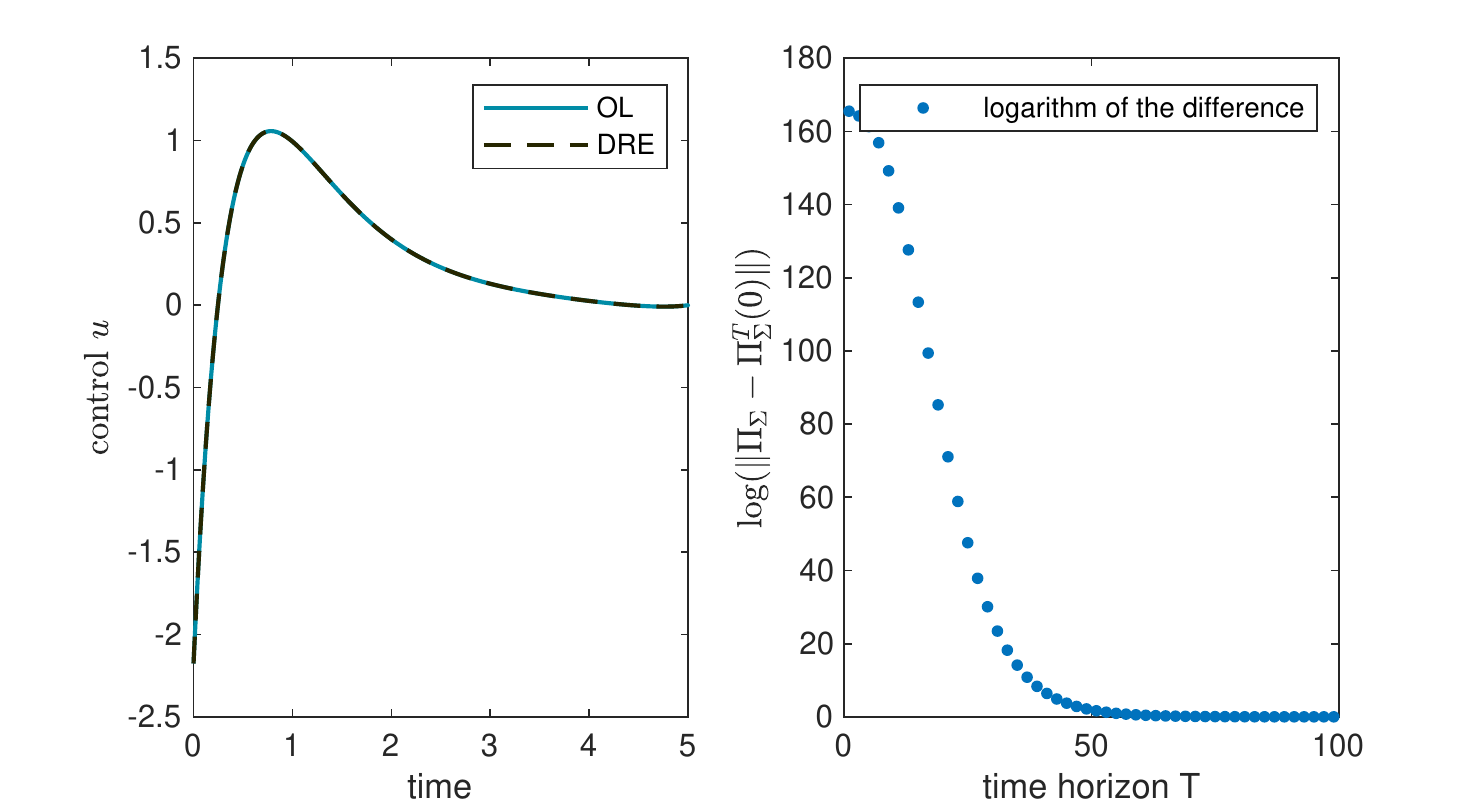}
    \caption{Left: FTH optimal controls by solving the open-loop first order optimality conditions (OL) and the differential Riccati equation~\eqref{eq:DRE} (DRE); $T=5$. Right: operator norm of the difference between the solution of~\eqref{eq:bigRiccati} and the solution of~\eqref{eq:DRE} evaluated at~$t=0$.}
    \label{fig:outlook}
\end{figure}

\subsection{A Cyclic Model with Uncertain Free Dynamics}
Consider the ensemble of systems $(\clA_{\sigma_i},B)_{i=1}^5$ given in \eqref{sys-CM-cyc} with $(b_{12},b_{23},b_{32})=(1,2,1)$, $x_{\circ} = \begin{bmatrix} 1 & 1 & -1 \end{bmatrix}^\top$ and ensemble of parameters
\begin{equation}\label{compSig1}
\varSigma=(\sigma_i)_{i=1}^5=\{-1000;-500;0;500;1000\}.
\end{equation}
 In Fig.~\ref{fig:6-1}, we see that the feedback $\Pi_{\bar{\sigma}}$ stabilizes the system for  the parameter~$\sigma=\sigma_3 = 0 = \bar{\sigma}$, but fails to stabilize the system for the remaining parameters $\sigma_i$, $i\ne3$, whereas the feedback $\clE^\top \bfPi_{\varSigma} \clE$ is stabilizing for all the parameters in~$\varSigma$.

\begin{figure}[htb]
    \centering
    \includegraphics[width=.85\textwidth]{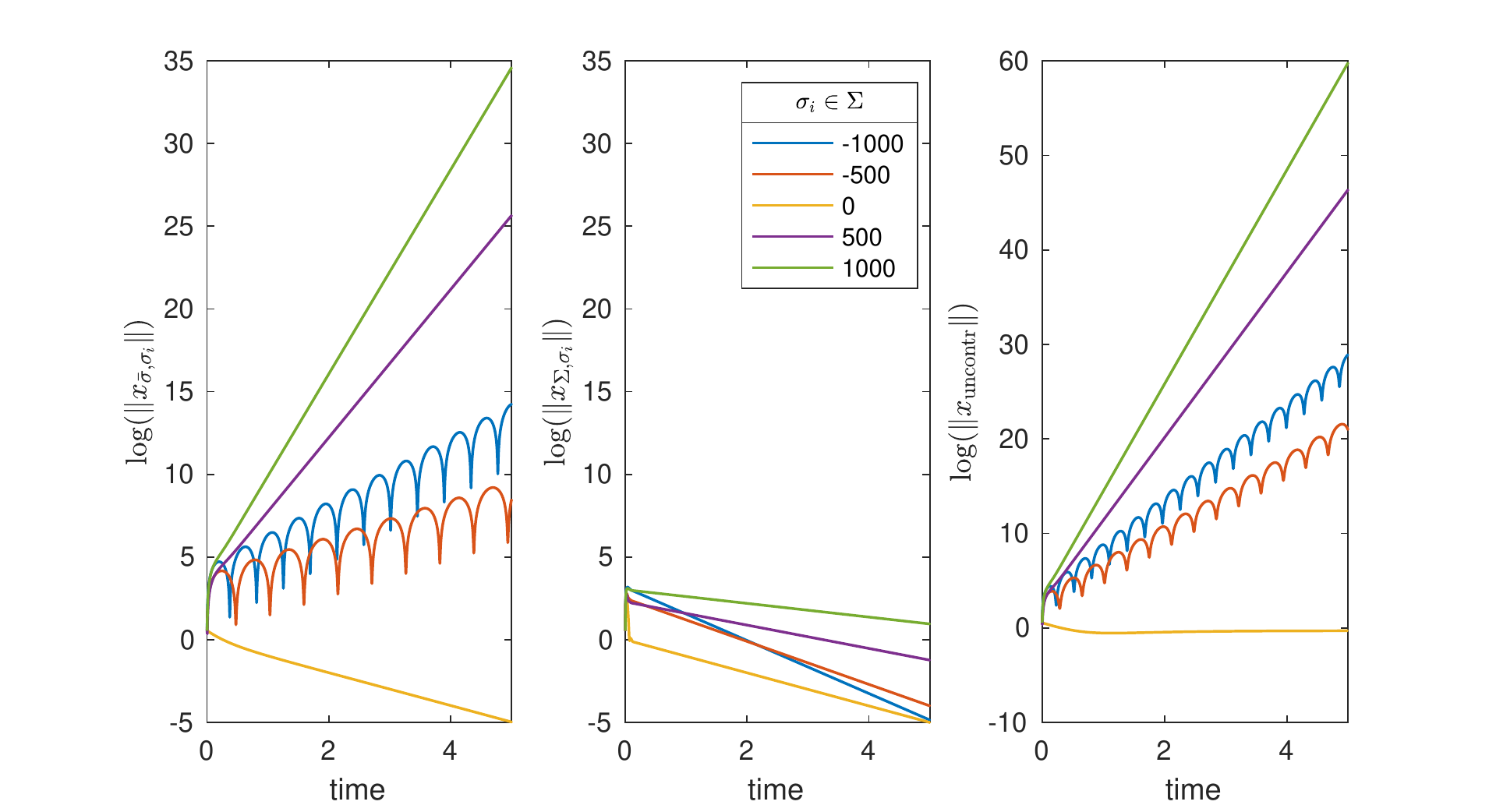}
    \caption{Evolution of norm of the states. $\varSigma$ as in~\eqref{compSig1}. Left:  feedback $\Pi_{\bar{\sigma}}$. Middle:  feedback $\clE^\top \bfPi_{\varSigma} \clE$. Right: free dynamics.}
    \label{fig:6-1}
\end{figure}
\begin{table}[ht]
\centering
\renewcommand{\arraystretch}{1.5}
\begin{tabular}{|c|l|c|c|}
\hline
\multirow{2}{6em}{Control cost} & $\frac15 \sum_{i=1}^5\int_0^T\frac{\alpha}{2} (u_{\bar{\sigma},\sigma_i}(t))^\top u_{\bar{\sigma},\sigma_i}\,\rmd t$ & $5.8303\cdot 10^{26}$ \\
\cline{2-3}
& $\frac15 \sum_{i=1}^5\int_0^T \frac{\alpha}{2}(u_{\varSigma,\sigma_i}(t))^\top u_{\varSigma,\sigma_i}\,\rmd t$ & 1186.3073 \\
\hline
\multirow{2}{6em}{State cost} & $\frac15 \sum_{i=1}^5\int_0^T \frac12 (x_{\bar{\sigma},\sigma_i}(t))^\top x_{\bar{\sigma},\sigma_i}\,\rmd t$& $9.1831\cdot 10^{27}$\\
\cline{2-3}
& $\frac15 \sum_{i=1}^5\int_0^T \frac12 (x_{\varSigma,\sigma_i}(t))^\top x_{\varSigma,\sigma_i}(t)\,\rmd t$ & 81.5806\\
\hline
\end{tabular}
\vskip 10pt
\caption{Comparison of the cost for $\sigma_i \in \varSigma$ as in~\eqref{compSig1}.}
  \label{tab:3}
\end{table}

Moreover, in Fig.~\ref{fig:6-2}, we see that~$\clE^\top \bfPi_{\varSigma} \clE$ also stabilizes the systems $(\clA_{\sigma_i},B)_{i=1}^N$ for new parameters $\sigma \notin \varSigma$, whereas the former feedback $\Pi_{\bar{\sigma}}$ fails to stabilize the system for any of the new parameters.

\begin{figure}[htb]
    \centering
    \includegraphics[width=.85\textwidth]{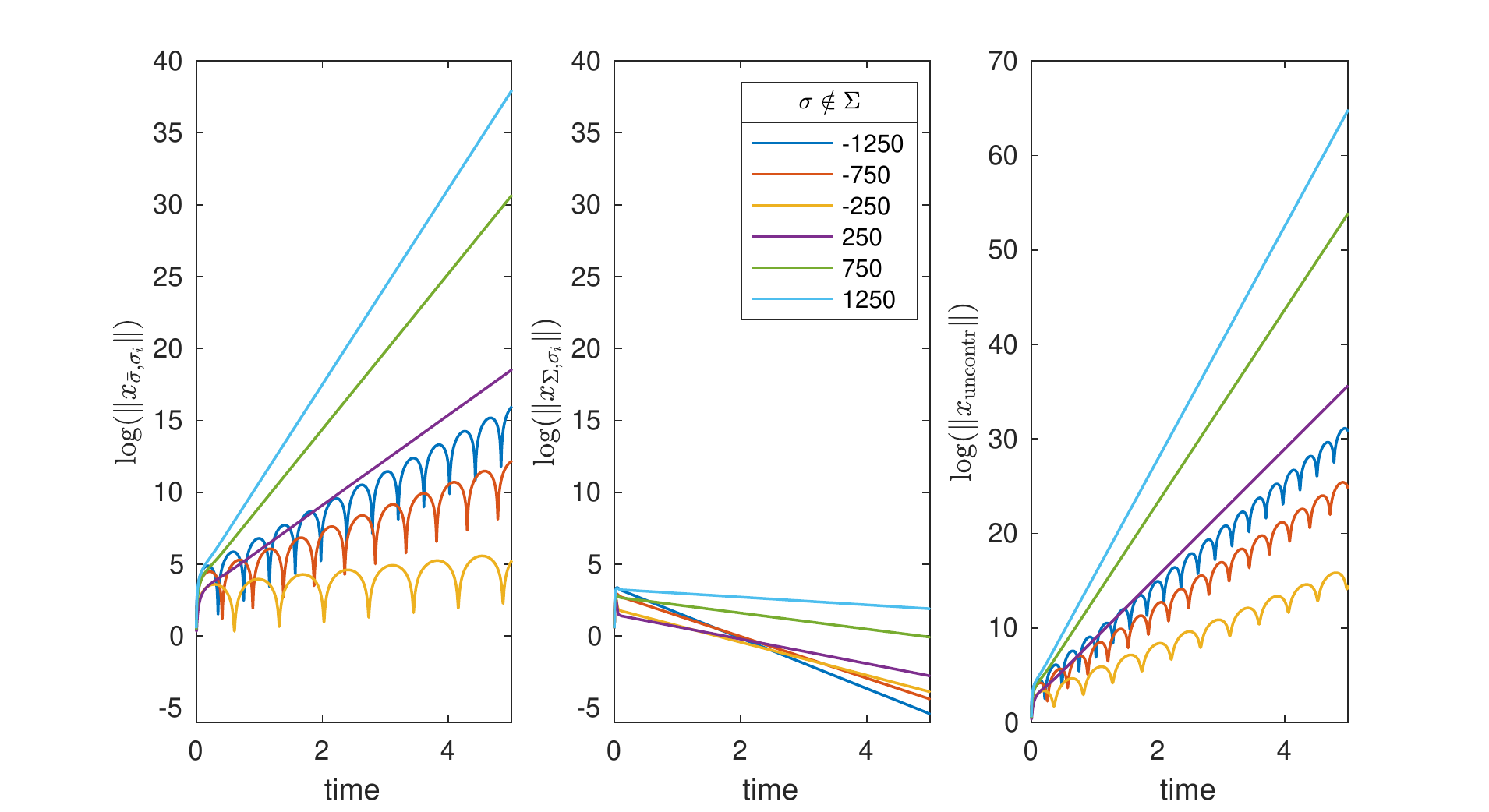}
    \caption{Evolution of norm of the states. $\varSigma$ as in~\eqref{compSig1}. Left: feedback $\Pi_{\bar{\sigma}}$. Middle:  feedback $\clE^\top \bfPi_{\varSigma} \clE$. Right: free dynamics.}
    \label{fig:6-2}
\end{figure}

Finally, we give an example where the proposed feedback $\clE^\top \bfPi_{\varSigma} \clE$ fails to stabilize the system for a parameter $\sigma\in\varSigma$. For this purpose we take the ensemble
\begin{equation}\label{compSig2}
\varSigma=(\sigma_i)_{i=1}^5=\{-1000;0;1000\}.
\end{equation}
\begin{figure}[htb]
    \centering
    \includegraphics[width=.85\textwidth]{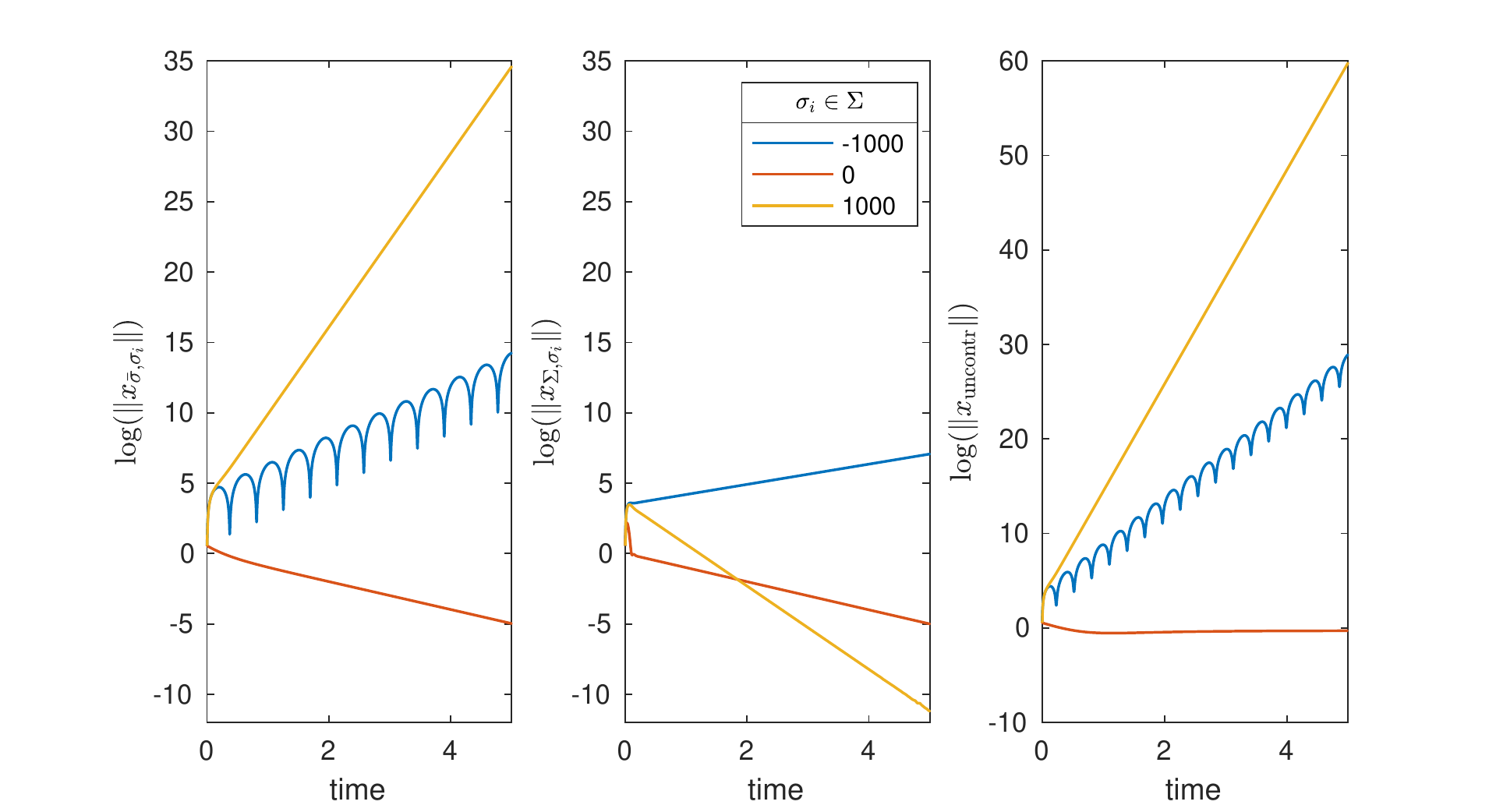}
    \caption{Evolution of norm of the states. $\varSigma$ as in~\eqref{compSig2}. Left: feedback $\Pi_{\bar{\sigma}}$. Middle: feedback $\clE^\top \bfPi_{\varSigma} \clE$. Right: free dynamics.}
    \label{fig:6}
\end{figure}
In Fig.~\ref{fig:6}, we see that the feedback $\clE^\top \bfPi_{\varSigma} \clE$ fails to stabilize the system for $\sigma_3=1000$. Still, the same figure also suggests that it is more robust than the feedback $\Pi_{\bar{\sigma}}$, since the latter is stabilizing only for~$\sigma=\sigma_2=0=\bar{\sigma}$.

Comparing the negative results obtained for $\varSigma$ as in~\eqref{compSig2} to the positive ones obtained for  $\varSigma$ as in~\eqref{compSig1}, we infer that the ensemble~$\varSigma$ of training parameters must be rich/fine enough to obtain a robust stabilizing feedback.

\subsection{Spectral Heat Equation} Consider the systems $(\clA_{\sigma_i},B)_{i=1}^3$ given in \eqref{eq:SpecHeat} with $M=5$, $x_{\circ} = \begin{bmatrix} -1 & -1 & -1 & -1 & -1 \end{bmatrix}^\top$, 
$\omega=(\omega_1,\omega_2) =(1,2)$, and
\begin{equation}\label{heatSig1}
\varSigma=(\sigma_i)_{i=1}^3=\Bigl\{-\frac{7}{12}\pi;-\frac{2}{12}\pi;\frac{1}{4}\pi\Bigr\}.
\end{equation}
Then, $(\clA_{\sigma_i},B)_{i=1}^3$ is ensemble controllable by Lemma~\ref{lem:SpecHeat}. Indeed, using the notation in~\eqref{eq:SpecHeatA}-\eqref{eq:SpecHeatB}, we find that
$(B)_{(j,1)} =\frac{2}{j\pi}( \cos(\frac{j}{2}) - \cos(j)) =\frac{2}{j\pi}(\cos(\frac{j}{2}) - 2\cos^2(\frac{j}{2}) +1)$, where we have used~$\cos(j) =2\cos^2(\frac{j}{2}) -1$.
Thus, ~$(B)_{(j,1)}=0$ if, and only if, $\cos(\frac{j}{2}) = 1$ or $\cos(\frac{j}{2}) = -0.5$, that is,
\begin{align}
(B)_{(j,1)}=0\quad\Longleftrightarrow\quad \frac{j}2\in \Bigl\{0,\frac23\pi,\frac43\pi\Bigr\}+2\pi\bbZ,
\end{align}
where~$\bbZ$ is the set of integer numbers.
Since $j$ is a positive integer, we conclude that $(B)_{(j,1)}\neq 0$.
Further, we have that, for~$k<j$,~$4(\sigma_k-\sigma_j)\in\{- \frac53\pi,-\frac{10}3\pi\}$ is not an integer. Thus, by Lemma~\ref{lem:SpecHeat}, $(\clA_{\sigma_i},B)_{i=1}^3$ is ensemble controllable.

Note that~$\clA_{\sigma_3}$ has exactly one nonnegative eigenvalue, namely~$\sigma_3-\lambda_1=\frac{\pi}4-\frac14$, whereas~$\clA_{\sigma_1}$ and~$\clA_{\sigma_2}$ have no nonnegative eigenvalues.

Similarly to the previous experiments, the feedback $\clE^\top\bfPi_{\varSigma}\clE$ appears to be more robust than $\Pi_{\bar{\sigma}}$. The latter fails to stabilize the system~$(\clA_{\sigma},B)$ for~$\sigma=\sigma_3\in\varSigma$, whereas the former stabilizes~$(\clA_{\sigma},B)$ for all~$\sigma\in\varSigma$.

\begin{figure}[htb]
    \centering
    \includegraphics[width=.85\textwidth]{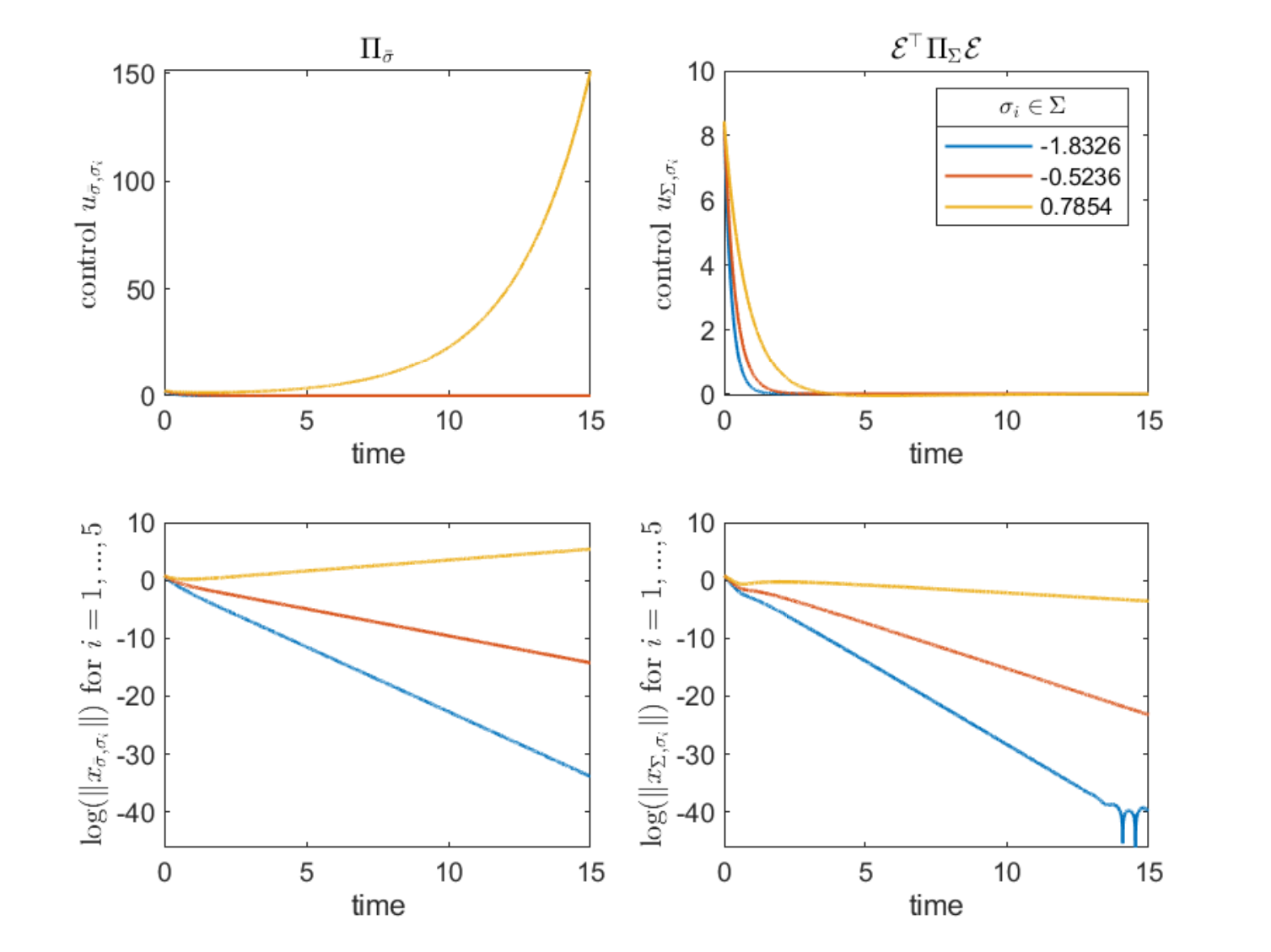}
    \caption{Time evolution of control and norm of state, for $\sigma_i \in\varSigma$ as in~\eqref{heatSig1}. Left: feedback~$\Pi=\Pi_{\bar{\sigma}}$. Right: feedback~$\Pi=\clE^\top\bfPi_{\varSigma}\clE$.}
    \label{fig:7}
\end{figure}

\section{Conclusions}
This manuscript is concerned with feedback stabilization of ensembles of finitely many linear dynamical systems. Based on the algebraic Riccati equation for a suitable extended system depending on the ensemble of (training) parameters, we develop a linear feedback law and provide conditions under which the obtained feedback control stabilizes each system in the ensemble. We emphasize that the same feedback operator is used for any given realization of the uncertain parameter.
Under appropriate assumptions we prove results on the associated (optimal) costs and on the stabilizing performance of the proposed feedback. We illustrate our theoretical results for a set of application-motivated examples and confirm our findings in numerical experiments, showing interesting stabilizing and robustness properties of the proposed feedback.

\bigskip\noindent
\textbf{Acknowledgements.} S. Rodrigues gratefully acknowledges partial support from
the State of Upper Austria and Austrian Science
Fund (FWF): P 33432-NBL.

\bibliography{EnsembleFeedback}
\bibliographystyle{plainurl}

\end{document}